\newcommand{\beal}{\begin{align}}
\newcommand{\enal}{\end{align}}
\newcommand{\bealn}{\begin{align*}}
\newcommand{\enaln}{\end{align*}}
\newcommand{\bear}{\begin{eqnarray}}
\newcommand{\eear}{\end{eqnarray}}
\newcommand{\beeq}{\begin{equation}}
\newcommand{\eneq}{\end{equation}}
\newcommand{\supp}{\mbox{\rm supp}}
\newcommand{\eps}{{\varepsilon}}
\newcommand{\R}{{\mathbb R}}
\newcommand{\sign}{\mbox{sign}}
\def\bm{\left[ \begin{array}{cc}}
\def\endm{\end{array}\right]}
\def\sign{{\rm sign}}
\def\eps{\varepsilon}
\def\bm{\left[\begin{matrix} }
\def\endm{\end{matrix}\right]}
\def\R{{\mathbb R}}
\def\S{\mathbb{S}^1}
\newtheorem{theorem}{Theorem}
\newtheorem{lemma}[theorem]{Lemma}
\newtheorem{cor}[theorem]{Corollary}
\newtheorem{prop}[theorem]{Proposition}
\newtheorem{conj}[theorem]{Conjecture}
\theoremstyle{remark}
\renewcommand{\hat}{\widehat}
\renewcommand{\epsilon}{\eps}
\renewcommand{\tilde}{\widetilde}
\numberwithin{equation}{section}
\numberwithin{theorem}{section}
\date{\today}
\begin{document}
\title{Semi-global controllability of a geometric wave equation}

\author{Joachim Krieger}
\address{Bâtiment des Mathématiques, EPFL\\Station 8, CH-1015 Lausanne, Switzerland}
\email{\texttt{joachim.krieger@epfl.ch}}
\thanks{}

\author{Shengquan Xiang}
\address{Bâtiment des Mathématiques, EPFL\\
 Station 8, CH-1015 Lausanne, Switzerland}
\email{\texttt{shengquan.xiang@epfl.ch.}}
\thanks{}

\begin{abstract}
We prove the semi-global controllability and stabilization  of the  $(1+1)$-dimensional  wave maps equation with spatial domain $\mathbb{S}^1$ and target $\mathbb{S}^k$. First we show that damping stabilizes the system when the energy is strictly below the threshold $2\pi$, where harmonic maps appear as obstruction for global stabilization. Then, we adapt an iterative control procedure to get low-energy exact controllability of the wave maps equation. This result is optimal in the case $k=1$.
\end{abstract}
\subjclass[2010]{35L05,   35B40, 93C20}
\thanks{\textit{Keywords.} wave maps, semi-global controllability, stabilization, quantitative.}
\maketitle

\section{Introduction}
\subsection{The wave maps}

Wave maps are prototypes of geometric wave equations. Let us be given a Riemannian manifold $(\mathcal{M}, g)$ and the classical $(\mathbb{R}^{1+ n}, h)$ with Minkowski metric $h= \textrm{diag} (-1, 1,..., 1)$. If we consider functions with geometric target, \[\phi:\mathbb{R}\times \mathbb{R}^{n}\rightarrow  \mathcal{M}\] with Lagrangian of the form 
\[  L^h_{\mathcal{M}}= \int_{\mathbb{R}^{1+ n}} -|\partial_t \phi|_g^2+ |\nabla_x \phi|_g^2\, dt dx,\]
the Euler-Lagrange equation is given by 
\[ D^{\alpha} \partial_{\alpha} \phi=0,\]
which, in local coordinates, has the form 
\[ \Box \phi^i+ \Gamma^i_{jk}(\phi)\partial^{\alpha}\phi^j \partial_{\alpha} \phi^k= 0,\]
with d'Alembertian  $\Box$ referring to $-\partial_{tt}+ \Delta$.
We shall also keep in mind the special case with $\mathcal{M}$ being a submanifold of $\mathbb{R}^m$ equipped with the Euclidean metric, such as $\mathbb{S}^d\subset \mathbb{R}^{d+1}$. In these cases, thanks to the special structure of the second fundamental form,  the wave maps equation becomes
\[ \Box\phi+ S_{\phi}(\partial^{\alpha}\phi, \partial_{\alpha}\phi)= 0,  \]
where $S_p$ is a symmetric quadratic form on the tangent space mapping into the normal space. \\

The well-posedness and singularity formation issues of wave maps have been extensively studied in the past few decades. In general the well-posedness in $H^s(\mathbb{R}^n)$-spaces is related to various factors, such as the energy critical dimension $n=2$, 
the Sobolev degree threshold $s= \frac{n}{2}$, the scale of data,  the lifespan, and  geometric features of the target $\mathcal{M}$.  There is a huge amount of literature dedicated  to this topic, see for example the works by Christodoulou--Tahvildar-Zadeh \cite{Christodoulou-Tahvildar-Zadeh-1993-Duke, Christodoulou-Tahvildar-Zadeh-1993}, Klainerman-Machedon \cite{Klainerman-Machedon-1997}, Tao \cite{Tao-wwm-1, Tao-wwm-2, tao2009global3, tao2009global4, tao2009global5, tao2009global6, tao2009global7},  Tataru \cite{Tataru-2001}, Sterbenz-Tataru \cite{Ster-Tat1, Ster-Tat2}, Krieger-Schlag \cite{Krieger-Schlag-2012}. We also refer to  the survey by Tataru \cite{Tataru-2004} and the references therein.  The evolution of wave maps  becomes more delicate for large data in energy critical (and supercritical) cases, as finite time blow up may occur. Singularity formation has been among one of the central topics of dispersive equations in the last few decades, see for example \cite{Krieger-Schlag-Tataru,   R-R-2012, Rod-Ster} in the wave maps context.

Remark that in some situations wave maps admit non trivial stationary states:  solutions of  the elliptic equation,
\[ \Delta \phi^i+ \Gamma^i_{jk}(\phi)\partial^{\alpha}\phi^j \partial_{\alpha} \phi^k= 0.\]
These solutions are the so-called \textit{harmonic maps}. Harmonic maps form a central mathematical topic with connections to various branches of mathematical physics, we refer to the lecture notes by Schoen and Yau \cite{Schoen-Yau-harmonicmap} for an introduction to them.  It is natural that these objects are crucial to the study of wave maps, for instance in \cite{Krieger-Schlag-Tataru}  blow up solutions are explicitly found as approximately self-shrinking harmonic maps. In this paper we  further observe that harmonic maps appear as obstruction for global damping stabilization. \\

In this work, more specifically, we focus on   the $(1+1)$--dimensional wave maps with sphere as target  $\phi: \R\times \mathbb{S}^1\rightarrow \mathbb{S}^k$. The quadratic form $S_p$ is further simplified and the wave maps equation becomes
\begin{equation*}
  \Box \phi=   \left(|\phi_t|^2- |\phi_x|^2\right)\phi, \; \phi[0]= (f, g),
\end{equation*}
where $\phi[t]$ denotes $(\phi, \phi_t)(t)$.
For ease of presentation we also lift them to  $2\pi$-periodic wave maps $\phi(t, x): \R\times \R\rightarrow \mathbb{S}^k\subset \R^{k+ 1}$ while keeping the same notation. 
This system is globally well-posed for large data in $H^{s}$ with $s>3/4$ according to  \cite{Keel-Tao-1998}.
Finally we shall notice  that wave maps conserve the energy,
\begin{equation*}
    E(t):= \int_{\mathbb{S}^1} \left(|\phi_t|^2+ |\phi_x|^2\right)(t, x) dx= constant.
\end{equation*}

\subsection{The controlled wave maps}
Control theory aims at controlling systems using additional control terms. For example, a general linear control system in finite dimension or infinite dimension can be written as 
\begin{equation*}
 \dot{x}= Ax+ Bu   
\end{equation*}
where $x$ is the state and $u$ is the control. 
Typical problems in control theory are controllability problems and stabilization problems.  We refer to the book by Coron \cite{coron} for an excellent introduction on this topic.  \\
\textit{Exact controllability} in a set $H$ means that  by choosing suitable controls we are able to steer any given initial state to  another desired final state in $H$. In general $H$ is chosen as a Hilbert space or a small ball inside some Hilbert space. In the latter case we shall call it \textit{local controllability}, since states are supposed to be small (thus ``local")\footnote{Remark that in control theory usually the terminology ``local" is used to describe the scale of states, namely small data, which is different from some other branches of PDEs' study, for example ``local well-posedness" usually refers to well-posedness in a small time period. }.  In this paper we only deal with exact controllability problems. Other weaker controllability properties are also natural to investigate, namely \textit{null controllability}, \textit{approximate controllability} and \textit{controllability around trajectory}. \\
\textit{Stabilization} is an action to stabilize systems with the help of  suitable feedback controls, $i. e. $  controls are governed by feedback laws and we are interested in the stability of closed-loop systems.  Can we make the system asymptotically stable,  exponentially stable or even rapidly stable (namely, the exponential decay rate can be as large as we want)?\\

In this paper we  are interested in control problems related to wave maps. 
For any initial state $u[0]$ taking values in  $\mathbb{S}^k\times T\mathbb{S}^k$ and any $\mathbb{R}^{k+1}$-valued source term $f(t, x)$ having regularity   $L^2(0, T; L^2_x(\S))$   we can look at  the following inhomogeneous system:
\begin{equation}\label{eq:inhomowavemaps}
     \Box \phi= \left(|\phi_{t}|^2- |\phi_{ x}|^2\right) \phi+ 
     f^{\phi^{\perp}}, \; \phi[0]= u[0]
\end{equation}
where, by $f^{\phi^{\perp}}$ we refer to the orthogonal projection of $f$ onto the tangent space $\phi^{\perp}$.
In particular,  when $f(t, x)$ is chosen to coincide with ${\bf 1}_{\omega} f$ we call  the preceding system  an {\it{internally controlled wave maps equation}}, since controls are  supported in an internal domain $\omega\subset \mathbb{S}^1$. Moreover, the precise control term is given \textit{implicitly} by ${\bf 1}_{\omega} f^{\phi^{\perp}}$.\\

First remark that the inhomogeneous wave maps equation \eqref{eq:inhomowavemaps} is  $\mathbb{S}^k$ invariant, namely $\phi[t]:  \mathbb{S}^1\rightarrow \mathbb{S}^k\times T\mathbb{S}^k$. Actually, if  we denote  by $y(t, x)$ the value of $|\phi(t, x)|^2$, then the conditions on the initial state $\phi[0]$ become  $$y(0, x)=1 \textrm{ and } y_t(0, x)=0, \; \forall x\in \mathbb{S}^1,$$
while the wave maps equation turns into a scalar wave equation in terms of $y$:
\begin{align*}
   \Box y= 2\langle \Box \phi, \phi\rangle- 2\left(|\phi_t|^2-|\phi_x|^2\right)
   = 2\left(|\phi_t|^2-|\phi_x|^2\right)(y- 1).
\end{align*}
Since the preceding wave equation admits a unique solution $(y, y_t)= (1, 0)$,
 the controlled wave maps equation stays on $\mathbb{S}^k$:
\begin{equation*}
    |\phi(t, x)|=1, \; \forall t\in \mathbb{R}, \forall x\in \mathbb{S}^1.
\end{equation*}

In this geometric framework by performing ``linearization" around trivial equilibrium points we ``roughly" arrive at the controlled wave equation, where we have also ignored  
 the geometric constraints upon controls and states,
 \begin{equation*}
     \Box \phi= {\bf 1}_{\omega} f.
 \end{equation*}
 The controllability of wave equations has been heavily investigated in the past decades based on the Hilbert Uniqueness Method (HUM) introduced by Lions \cite{Lions}. Heuristically speaking, HUM transforms controllability problems into (quantitative) unique continuation problems, which are also known as duality and observability inequalities.  Based on this observation satisfactory  controllability results have been discovered using  different analytic techniques, for example, in cases that the controlled domain verifies certain conditions, the  multiplier method leads to explicit observability inequalities \cite{Lagnese}; on the other hand microlocal analysis provides an (almost) necessary and sufficient condition for controllability properties, the so called Geometric Control Condition (GCC) introduced by Bardos--Lebeau--Rauch \cite{Bardos-Lebeau-Rauch} (we also refer to \cite{Burq-Gerard-wave} for a simplified proof),  for which, however, the control is not always explicit due to the use of compactness arguments. 
 
 In particular the preceding wave equation is exactly controllable in the space $H^1_x\times L^2_x(\S)$, see Lemma~\ref{lem:conlwm} for a detailed statement of this result. This motivates the first  question that we may ask for the controlled wave maps equation.
 
\noindent {\bf Question 1:}
    \textit{Is the controlled wave maps equation (globally) controllable?}

\subsection{Damping stabilization for  waves}
From now on we fix   $a(x)$ as  some function that is  non-negative, smooth, and supported in $\omega\subset \S$, which is assumed to have non-empty interior. We shall also assume that $a(x)$ is strictly positive in $\omega_0$, an interval inside $\omega$. 
Let us consider the damped wave maps equation, 
\begin{equation}\label{equation:dwm}
     \Box \phi= \left(|\phi_t|^2- |\phi_x|^2\right)\phi+ a(x) \phi_t, \; \phi[0]\in \mathbb{S}^k\times T\mathbb{S}^k.
\end{equation}
  The damped wave maps equation can be  regarded as a special case of the controlled wave maps equation. In particular it also remains $\mathbb{S}^k$-invariant whence $\phi_t\perp \phi$.  In fact, by labelling $y(t, x)$ the value of $|\phi(t, x)|^2$ we get 
\begin{gather*}
    \Box y= 2\left(|\phi_t|^2-|\phi_x|^2\right)(y- 1)+ a(x) y_t, \\
    y(0, x)=1 \textrm{ and } y_t(0, x)=0, \; \forall x\in \S,
\end{gather*}
hence $|\phi(t, x)|=1$.

It is well-known that  damping  dissipates waves' energy, we refer to  the (nonlinear) wave equations \cite{Anantharaman-Leautaud-2014, Bardos-Lebeau-Rauch, Dehman-Lebeau-Zuazua, Lagnese}, the defocusing Klein-Gordon equations \cite{Laurent-2011}, the (nonlinear) Schrödinger equations \cite{DGL-2006}, KdV \cite{zuazua02, rosier97}, among others. Indeed, this is also the case for wave maps:
\begin{align*}
    \frac{1}{2} \frac{d}{dt} E(t)
    = -\int_{\mathbb{S}^1}a(x)|\phi_t|^2(t, x) dx\leq 0,
\end{align*}
thus implying
\begin{equation*}
    E(T)- E(0)= -2\int_0^T\int_{\mathbb{S}^1}a(x)|\phi_t|^2(t, x)\, dx dt.
\end{equation*}
Based on this  energy dissipation observation, it is natural to ask about the stability of such damped systems: is the damped wave maps equation asymptotically  stable or even exponentially stable? In control theory this type of stabilization problem has been extensively studied, which turned out to be linked with \textit{unique continuation} problems and  \textit{observability inequality} type estimates. To be more precise, suppose that the following observability inequality holds,
\begin{equation}\label{eq:int:obin}
  E(0)\leq c \int_0^T\int_{\mathbb{S}^1}a(x)|\phi_t|^2(t, x) dxdt,
\end{equation}
then the (nonlinear) system is exponentially stable with some decay rate depending explicitly on $T$ and $c$:
\begin{equation*}
    E(t)\leq C e^{-\gamma t} E(0), \; \forall t\in (0, +\infty).
\end{equation*}

In the context of wave equations, damping terms lead to exponential stability provided GCC is satisfied, according to \cite{Bardos-Lebeau-Rauch}.
Otherwise weaker stability properties can be expected such as logarithmic stability, see for instance  \cite{Anantharaman-Leautaud-2014, Lebeau-Robbiano-stabilization} and the references therein on this subject.

By considering the nonlinear terms as perturbation, it is standard to derive from the stability of the damped wave equations that the ``linearized damped wave maps equation" is exponentially stable.  One may ask  the following question:

\noindent {\bf Question 2:}
    \textit{What is the global stability of the damped wave maps equation?}

\subsection{Main results and the strategy of proofs}

In this paper we answer \textit{Questions 1--2} concerning (global) controllability and stabilization of wave maps. More precisely, we  present the following theorem on the quantitative semi-global  controllability of wave maps, where by semi-global we mean that the energy is strictly below $2\pi$.    We will see that $2\pi$ is the energy threshold on  stabilization of the damped wave maps equation, moreover, it is also optimal for controllability if the wave maps' target is $\S$.  However, global controllability of wave maps is anticipated when the target is $\mathbb{S}^k$ with $k$ strictly greater than 1. To the best of our knowledge this is the first control result on a geometric wave equation.

\begin{theorem}[Semi-global exact controllability of   wave maps]\label{mainthm}
For any $\nu>0$ there exists some effectively computable $T\geq 2\pi$ and $C>0$ such that, for any pair of states $u[0]$ and $u[T]: \S\rightarrow \mathbb{S}^k\times T\mathbb{S}^k$ verifying 
\begin{equation*}
    \|u[0]\|_{\dot{H}^1_x\times L^2_x}, \|u[T]\|_{\dot{H}^1_x\times L^2_x}\leq 2\pi-\nu,
\end{equation*}
we are able to construct a $\mathbb{R}^{k+1}$-valued control $f(t, x)$ satisfying 
\begin{equation*}
     \|f\|_{L^{\infty}_t L^2_x([0, T]\times \S)}\leq C \left(\|u[0]\|_{\dot{H}^1_x\times L^2_x}+ \|u[T]\|_{\dot{H}^1_x\times L^2_x}\right) ,
\end{equation*}
such that the unique solution of the inhomogeneous wave maps equation
\begin{gather*}
     \Box \phi= \left(|\phi_{t}|^2- |\phi_{ x}|^2\right) \phi+ \mathbf{1}_{\omega} f^{\phi^{\perp}}, \; \phi[0]= u[0],
\end{gather*}
 verifies $(\phi, \phi_t)(t, x)\in \mathbb{S}^k\times T\mathbb{S}^k$ and
 \begin{gather*}
 \phi[T]= u[T].
 \end{gather*}
\end{theorem}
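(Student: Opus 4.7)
My plan is to follow the now-standard three-step ``stabilise, then control locally, then concatenate'' scheme of Dehman--Lebeau--Zuazua, adapted to the geometric constraint $\phi\in\mathbb{S}^k$ and to the sharp energy threshold $2\pi$ dictated by non-trivial harmonic maps $\S\to\mathbb{S}^k$.

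\textbf{Step 1: damping stabilisation below the threshold.} I would first establish exponential decay of energy for the damped wave maps equation \eqref{equation:dwm} whenever $E(0)\le 2\pi-\nu$. Since the energy identity already gives
$$E(T)-E(0)=-2\int_0^T\!\!\int_{\S}a(x)|\phi_t|^2\,dx\,dt,$$
exponential decay is equivalent to the nonlinear observability inequality \eqref{eq:int:obin} restricted to the sublevel set $\{E\le 2\pi-\nu\}$. I would prove this inequality by a compactness--uniqueness argument: if it failed, there would be a sequence of solutions $\phi_n$ with $\eta\le E_n(0)\le 2\pi-\nu$ but $\int_0^T\!\int a|\partial_t\phi_n|^2\to 0$. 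Because the $(1+1)$-dimensional nonlinearity is subcritical in the energy space, concentration--compactness in $\dot H^1_x\times L^2_x$ should produce a limit $\phi_\infty$ that still solves the homogeneous wave maps equation and satisfies $a(x)\partial_t\phi_\infty\equiv 0$. A unique continuation step (Holmgren for the underlying wave operator, propagated across the strip as $T\ge 2\pi$) then forces $\partial_t\phi_\infty\equiv 0$ on $[0,T]\times\S$, so that $\phi_\infty$ is a harmonic map $\S\to\mathbb{S}^k$. Energy quantisation for such maps gives $E(\phi_\infty)\in\{0\}\cup[2\pi,\infty)$; the bound $E\le 2\pi-\nu$ kills the second alternative and $E\ge\eta$ kills the first, contradiction.

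\textbf{Step 2: local exact controllability near constants.} The linearisation of the wave maps equation at a constant map $\phi\equiv p_0\in\mathbb{S}^k$ decouples into copies of the free wave equation on the tangent space $T_{p_0}\mathbb{S}^k$, whose internal controllability on $\S$ in time $T\ge 2\pi$ is precisely Lemma~\ref{lem:conlwm}. I would promote this to the full nonlinear equation by a HUM + Banach fixed-point scheme: starting from the HUM control of the linear wave equation, iterate with the quadratic nonlinearity frozen at the previous step. The projection $f\mapsto f^{\phi^\perp}$ guarantees that the sphere constraint is preserved throughout, as already noted in the introduction. For small data $\|u[0]\|,\|u[T]\|\le\delta$ this produces a control $f$ steering $u[0]$ to $u[T]$ with $\|f\|_{L^\infty_tL^2_x}\lesssim\delta$.

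\textbf{Step 3: concatenation.} Run the damping feedback forward on $[0,T_1]$ starting from $u[0]$; by Step~1 and an explicit choice of $T_1=T_1(\nu)$ the resulting state has $\dot H^1_x\times L^2_x$-norm at most $\delta/2$. Since the wave maps equation is time-reversible, run the damping feedback backwards on $[T-T_2,T]$ starting from $u[T]$ to obtain, at time $T-T_2$, a state of norm at most $\delta/2$. On the middle interval $[T_1,T-T_2]$ (chosen of length $\ge 2\pi$) apply Step~2 to bridge the two small states with a local control. Gluing the three pieces yields a control $f$ on $[0,T]$ with the desired quantitative bound, where both $T\ge 2\pi$ and $C$ depend only on $\nu$.

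\textbf{Main obstacle.} The delicate point is Step~1: the quadratic nonlinearity $(|\phi_t|^2-|\phi_x|^2)\phi$ is not a small perturbation at the energy level $2\pi-\nu$, so strong compactness of the sequence $\phi_n$ in $\dot H^1_x\times L^2_x$ has to be extracted carefully (most likely via a profile decomposition adapted to the $(1+1)$-dimensional wave maps equation), and the energy quantisation for harmonic maps $\S\to\mathbb{S}^k$ must be invoked sharply. Identifying harmonic maps as the \emph{only} possible obstruction is exactly where the threshold $2\pi$ enters, and where the result must fail once the hypothesis $E<2\pi$ is dropped, consistent with the abstract's claim of optimality when $k=1$.
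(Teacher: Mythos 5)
Your overall architecture (stabilise, control locally, concatenate) matches the paper's, and your identification of the threshold $2\pi$ with the energy quantisation for harmonic maps $\S\to\mathbb{S}^k$ is exactly the right mechanism. The concatenation in Step~3 differs cosmetically (you bridge two small states directly, the paper stabilises both $u[0]$ forward and $u[T]$ backward to the fixed state $(1,0)$ and uses time-reversal), but both variants are fine. Step~2, an iterative fix-up of the linear HUM control to respect the geometric constraint, is also what the paper does in Theorem~\ref{thm:localcontrolwm} and Lemma~\ref{lem:contkeyiteration}.

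The genuine gap is in Step~1. The theorem statement asserts that $T$ and $C$ are \emph{effectively computable}, and the paper makes this a central selling point: Propositions~\ref{prop:1w}--\ref{prop:3} replace the compactness--uniqueness contradiction argument by a direct, quantitative chain. Proposition~\ref{prop:1w} shows via an explicit time-averaging device (multiplication by a fixed bump $\psi(t)$, reduction to the ODE $\tilde\phi_{xx}+c_E\tilde\phi\approx 0$, and a Fourier-mode bound that degenerates exactly as $E(0)\to 2\pi$) that $\|\phi_t\|_{L^\infty_x L^2_t}^2>c_b\mu E(0)$ when $\mu\le E(0)\le 2\pi-\sqrt{\mu}$; Proposition~\ref{prop:2} propagates the smallness of $\int a|\phi_t|^2$ across $\S$ by an explicit bootstrap in overlapping trapezoids with quantitative loss at each step; Proposition~\ref{prop:3} handles the low-energy regime by perturbation off the explicit linear decay rate. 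Your proposed route — profile decomposition, Holmgren unique continuation, contradiction — would indeed yield an observability inequality and hence exponential decay, but only with a non-constructive constant. It proves the qualitative statement but not the theorem as written. Moreover, the observability constant must degenerate as $\nu\to 0$ (because of the harmonic map $\mathcal{Q}$), and a compactness argument sees this degeneration only a posteriori; the paper's Proposition~\ref{prop:1w} exhibits it explicitly through the factor $\frac{1}{2\pi-E(0)}$ in the estimate for $|a_{\pm1}|$. If you want to salvage your route, you would either have to drop the effectivity claim or quantify each compactness step, at which point you would likely reconstruct something close to the paper's argument.

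One further caution: be careful about asserting strong $\dot H^1_x\times L^2_x$ compactness for a sequence of solutions of the \emph{damped} equation merely from the $2\pi$ energy bound; the null-form nonlinearity $\phi_u\cdot\phi_v$ is only $L^2_{t,x}$-bounded at energy regularity (estimate \eqref{es:e4}), and passing to the limit in the equation requires the product estimates the paper develops in Section~\ref{subsec:prel}. This is handled implicitly in the paper's quantitative bootstrap but would need a separate argument in a profile-decomposition proof.
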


Here we adapt a two-step strategy for  semi-global controllability: \textit{global-stabilization/local-control}.
\begin{itemize}
    \item The first step is devoted to stabilize wave maps' energy using damping terms, as damping terms can be regarded as  special control terms. This process stops as soon as the energy becomes sufficiently small. See Theorem~\ref{thm:main} for details, whose proof is presented in Section~\ref{sec-stabilizationstep}.
    \item In the second step  we prove exact controllability of wave maps for states with small energy. Actually, this step is again composed of two stages: we first prove local exact controllability around every trivial equilibrium state, $i.e.$ $(p, 0)$ with $p\in \mathbb{S}^k$; then for any $p, q\in \mathbb{S}^k$ we move the state  from $(p, 0)$ to $(q, 0)$ slowly. Section~\ref{sec:controlstep} is devoted to this step leading to the proof of     Theorem~\ref{thm:smallcontrolwm}.
\end{itemize}
Such a \textit{global-stabilization/local-control} strategy has been widely adapted to global controllability problems.   It is natural to expect local controllability results in cases where linearized systems are controllable. However, when dealing with global problems, usually controllability around equilibria is not  enough to conclude global controllability properties, as nonlinear terms may play a significant role: that is the reason we first stabilize the system.     Damping provides a simple stabilization process for global stabilization of nonlinear systems, as we may observe from the inequality  \eqref{eq:int:obin} that exponential stability is hidden from observability inequalities despite nonlinear structure. Successful examples include but are not limited to Benjamin-Ono equations \cite{Laurent-Linares-Rosier-2015}, KdV equations \cite{Laurent-Rosier-Zhang-kdv}, NLW equations \cite{Dehman-Lebeau-Zuazua, Laurent-2011}, NLS equations \cite{DGL-2006}.\\  Concerning global stabilization there exist many  other  techniques in the literature, for instance, the authors in  \cite{Coron-trelat-2004}   have benefited from a  stabilization process to move  solutions around initial states to some point near final states; in \cite{Coron-Xiang-2018} based on the transport property of the viscous Burgers equations some feedback laws have been constructed to globally stabilize  states in short time.  We shall also mention that in some circumstances it is more efficient to  obtain global controllability without relying on stabilization. In particular, the return method introduced by Coron turned out to be a powerful philosophy  to get  small-time global controllability, as is the case for various models including Euler equations \cite{Coron-1996-Euler, Glass-2000}, Navier--Stokes equations \cite{1996-Coron-Fursikov-RJMP, Coron-Marbach-Sueur-Zhang}, KdV \cite{Chapouly-kdv},  the viscous Boussinesq system \cite{chavessilva2020smalltime} among others.  \\

\noindent {\bf Low-energy exact controllability of   wave maps}

Keeping in mind that wave equations are exactly controllable, 
 by ignoring the geometric constraints and regarding the controlled wave maps equation as a controlled semilinear wave system,  it sounds natural to expect local exact controllability around  trivial equilibrium points $(p, 0)$ with  $ p\in \mathbb{S}^k$.  The main difficulty comes from the geometric constraint  ensuring that  $u[t]$ remains on the target $ \mathbb{S}^k\times T\mathbb{S}^k$; this is handled by an iterative control construction.      

\begin{theorem}[Low-energy exact controllability of   wave maps]\label{thm:smallcontrolwm}
There exist effectively computable $T\geq 2\pi, e_s>0$ and $C_G>0$ such that, for any pair of states $u[0]$ and $u[T]: \S\rightarrow \mathbb{S}^k\times T\mathbb{S}^k$ verifying 
\begin{equation*}
    \|u[0]\|_{\dot{H}^1_x\times L^2_x}+ \|u[T]\|_{\dot{H}^1_x\times L^2_x}\leq e_s,
\end{equation*}
we are able to construct a $\mathbb{R}^{k+1}$-valued control function $f(t, x)$ satisfying 
\begin{equation*}
     \|f\|_{L^{\infty}_t L^2_x([0, T]\times \S)}\leq C_G \left(\|u[0]\|_{\dot{H}^1_x\times L^2_x}+ \|u[T]\|_{\dot{H}^1_x\times L^2_x}\right) ,
\end{equation*}
such that the unique solution of the inhomogeneous wave maps equation
\begin{gather*}
     \Box \phi= \left(|\phi_{t}|^2- |\phi_{ x}|^2\right) \phi+ \mathbf{1}_{\omega} f^{\phi^{\perp}}, \; \phi[0]= u[0],
\end{gather*}
 verifies $(\phi, \phi_t)(t, x)\in \mathbb{S}^k\times T\mathbb{S}^k$ and
 \begin{gather*}
 \phi[T]= u[T].
 \end{gather*}
\end{theorem}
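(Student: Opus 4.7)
The plan is to execute the two-stage \emph{local-control / slow-connection} strategy sketched after the statement: first prove exact controllability in a small $\dot H^1\times L^2$ neighbourhood of every trivial equilibrium $(p,0)$, $p\in\mathbb{S}^k$, and then connect any two such equilibria $(p,0)$ and $(q,0)$ via an explicit low-energy path. The total time $T$ will be the sum of three sub-times, each compatible with the lower bound $T\geq 2\pi$ inherited from Lemma~\ref{lem:conlwm}, and the control bound will be assembled from the HUM bound plus the Lipschitz estimates of the fixed-point arguments.

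\textbf{Stage A (local controllability near $(p,0)$).} Near a fixed $p\in\mathbb{S}^k$ the wave maps equation is a quadratic perturbation of the free wave equation acting on $T_p\mathbb{S}^k$-valued maps: writing $\phi=p+\psi$ with $\psi$ small,
\begin{equation*}
\Box\psi=\bigl(|\psi_t|^2-|\psi_x|^2\bigr)(p+\psi)+\mathbf{1}_\omega f^{\phi^\perp}.
\end{equation*}
Lemma~\ref{lem:conlwm} supplies, for some $T_0\geq 2\pi$, a bounded HUM operator which to any two tangential states in $\dot H^1\times L^2$ assigns a linear control steering between them under the free wave equation. I would then set up a Picard-type iteration: given $\phi_n$ close to $p$, obtain $f_{n+1}$ by applying HUM to the linear wave equation with source term $(|\partial_t\phi_n|^2-|\partial_x\phi_n|^2)\phi_n$ so that the solution of $\Box\phi_{n+1}=(|\partial_t\phi_n|^2-|\partial_x\phi_n|^2)\phi_n+\mathbf{1}_\omega f_{n+1}^{\phi_n^\perp}$ with $\phi_{n+1}[0]=u[0]$ satisfies $\phi_{n+1}[T_0]=u[T_0]$. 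The nonlinearity is quadratic in $\partial\phi_n$, and the implicit projection $\cdot^{\phi_n^\perp}$ differs from $\cdot^{p^\perp}$ by an $O(\|\phi_n-p\|_{L^\infty})$ error; combined with the $1$-d Sobolev embedding $H^1\hookrightarrow L^\infty$ and standard energy estimates for $\Box$ with $L^\infty_tL^2_x$ forcing, this should make the iteration a contraction on a small ball in $C_t\dot H^1_x\cap C^1_tL^2_x([0,T_0]\times\S)$. The fixed point is the desired control, with norm controlled linearly by $\|u[0]\|+\|u[T_0]\|$.

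\textbf{Stage B (connecting $(p,0)$ to $(q,0)$).} For arbitrary $p,q\in\mathbb{S}^k$ choose a unit-speed great circle $\gamma:\mathbb{R}\to\mathbb{S}^k$ with $\gamma(0)=p$ and $\gamma(L)=q$, $L\leq\pi$. Because $\gamma$ satisfies the sphere geodesic equation $\ddot\gamma=-|\dot\gamma|^2\gamma$, the $x$-independent map $\Psi_\lambda(t,x):=\gamma(\lambda t)$ is an \emph{exact} solution of the uncontrolled wave maps equation on $\S$ for every $\lambda>0$, of constant energy $2\pi\lambda^2$. Choosing $\lambda$ small enough that the states $(p,\lambda\dot\gamma(0))$ and $(q,\lambda\dot\gamma(L))$ each sit inside the Stage A neighbourhood, I would bridge $(p,0)$ and $(q,0)$ on three consecutive intervals: first Stage A steers $(p,0)\to(p,\lambda\dot\gamma(0))$, then the free geodesic evolution $\Psi_\lambda$ runs for time $L/\lambda$ with zero control, and finally Stage A steers $(q,\lambda\dot\gamma(L))\to(q,0)$. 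Concatenation yields a $(p,0)\to(q,0)$ control of $L^\infty_tL^2_x$-norm $\lesssim\lambda$.

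\textbf{Assembly and main obstacle.} For arbitrary $u[0],u[T]$ of energy at most $e_s$, the sphere constraint together with Poincaré on $\S$ forces the $\phi$-components of $u[0]$ and $u[T]$ to lie in $L^\infty$ within $O(e_s)$ of constants $p_0,p_1\in\mathbb{S}^k$ (take the spatial mean and normalise). Three applications — Stage A to reach $(p_0,0)$, Stage B to reach $(p_1,0)$, Stage A again to reach $u[T]$ — deliver the theorem with $T\geq 3T_0\geq 2\pi$ and $C_G$ produced by the combined HUM and fixed-point bounds. The main technical obstacle I anticipate is closing the Picard iteration in Stage A in the presence of the implicit projection $f^{\phi^\perp}$: because the range of this projection moves with the iterate, the contraction estimate requires controlling the $L^\infty_{t,x}$-closeness of $\phi_n$ to $p$ uniformly in $n$ and must also be reconciled with the HUM bound for the linearly rewritten equation; in $1{+}1$ dimensions this is accessible via the Sobolev embedding and finite speed of propagation, but it is the delicate point of the construction.
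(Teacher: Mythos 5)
Your two-stage plan --- local exact controllability near every $(p,0)$ followed by a low-energy connection between any two equilibria --- mirrors the architecture of the paper's proof (Theorem~\ref{thm:localcontrolwm} and Corollary~\ref{thm:localexactcontrolwm}). Your Stage~B, however, is genuinely different from the paper's. You ride the explicit $x$-independent solution $\Psi_\lambda(t,x)=\gamma(\lambda t)$ of the \emph{uncontrolled} wave maps equation (using $\ddot\gamma=-\gamma$ for a unit-speed great circle), which is a clean and correct idea: its energy is $2\pi\lambda^2$, and $\lambda$ is fixed by the size of the Stage~A neighbourhood, so the geodesic time $L/\lambda\lesssim 1/\tilde\varepsilon$ is effectively computable. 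The paper instead chains $N\lesssim 1/\tilde\varepsilon$ consecutive applications of the local result along a sequence $p_0,\dots,p_N$ with $|p_i-p_{i+1}|\leq\tilde\varepsilon$. Both produce a fixed $T$ of the same order; your route saves the repeated bookkeeping of the iteration but costs an extra control-free stretch, and the two are essentially interchangeable.

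There is, however, a real gap in Stage~A, and it sits exactly where you flagged a "delicate point" but then left unresolved. As written, the Picard step is not well posed: you ask HUM to produce $f_{n+1}$ so that the linear equation with control $\mathbf{1}_\omega f_{n+1}^{\phi_n^\perp}$ hits $u[T_0]$, but HUM yields an $\mathbb{R}^{k+1}$-valued control $g$, and there need be no $f_{n+1}$ with $f_{n+1}^{\phi_n^\perp}=g$ unless $g(t,x)\perp\phi_n(t,x)$ pointwise --- a constraint the HUM operator does not enforce. Moreover, the frozen-nonlinearity equation for $\phi_{n+1}$ does not preserve the sphere constraint, so your iterates live off $\mathbb{S}^k$ until the limit. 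The paper's proof closes this with a specific geometric estimate you do not supply: for data $\varepsilon$-concentrated near $p$ and tangent to $\mathbb{S}^k$ one has $|(u_x)^p|=|\langle u_x,p-u\rangle|\leq 3\varepsilon|u_x|$ (and likewise for $u_t$ and $u-p$), so the deviation $\phi_k[T]-(p,0)$ is tangent to $p^\perp$ up to an $O(\varepsilon)$ relative error. One can therefore apply HUM without projection, project the resulting control onto the \emph{fixed} plane $p^\perp$, solve the full (sphere-preserving) nonlinear wave maps equation with that control, and accept an endpoint miss of size $O(\varepsilon)\,\|\phi_k[T]-(p,0)\|_{H^1\times L^2}$; accumulating the controls $f_k=f_{k-1}+h_k$ then gives a geometric contraction (Lemma~\ref{lem:contkeyiteration}). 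Your iteration can be repaired along these lines, but without the tangency estimate and without keeping the iterates on the genuine nonlinear flow, the contraction does not close.
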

\noindent Remark that \textit{low energy controllability} implies the typical \textit{local controllability}  but is different from  the latter.\\

\noindent {\bf Semi-global stabilization of the damped wave maps.}

Thanks to the preceding step, it suffices to prove semi-global approximate controllability, namely for any initial state we are able to steer it towards some state having sufficiently small energy. \\ Here, we focus on damping stabilization where damping terms can be regarded as a simple local feedback law,   despite various other powerful stabilization techniques in the literature. These methods usually provide more complicated nonlocal feedback laws, which include  but are not limited to, the backstepping method \cite{coron2021stabilization, coronluqi,Gagnon-Hayat-Xiang-Zhang}, the  basic Lyapunov approach for systems of conservation laws \cite{SaintVenantPI}, the frequency Lyapunov for finite time stabilization problems \cite{Xiang-heat-2020}, and \cite{krieger2020boundary} for the focusing NLKG.

Regarding  Theorem~\ref{mainthm} one may ask \textit{where the  $2\pi$-energy bound in Theorem~\ref{mainthm} comes from.}  In fact,
 harmonic maps appear as non-trivial stationary states for wave maps,  $i. e.$ functions $\varphi(x)$ satisfying 
\begin{equation*}
  \Delta \varphi=   \left(|\varphi_t|^2- |\varphi_x|^2\right)\varphi.
\end{equation*}
They are also stationary states for the damped wave maps equation:
 we are not able to expect any global ($i.e.$ large data) exponential stability of the damped wave maps equation. 
In our simplified context, namely maps from  $\S$ to $\mathbb{S}^k$, such a harmonic map is given by 
\begin{equation*}
    \mathcal{Q}(x)= (\cos{x}, \sin{x}, 0,..., 0)\in \mathbb{S}^{k}\subset \mathbb{R}^{k+1}, \; \forall x\in \mathbb{S}^1,
\end{equation*}
whose energy is
\begin{equation*}
    E= 2\pi.
\end{equation*}

Inspired by the preceding observations it is natural to ask {\it if  the damped wave maps equation is exponentially stable for initial states with energy uniformly below the threshold $2\pi$.}
The answer is positive:
\begin{theorem}[Semi-global stabilization of the damped wave maps]\label{thm:main}
For any $\nu>0$ there exist some effectively computable $C$ and $c$ such that for any initial state $\phi[0]: \S\rightarrow \mathbb{S}^k\times T\mathbb{S}^k$  satisfying 
\begin{equation*}
    E(0)\leq 2\pi- \nu
\end{equation*}
the unique solution of the damped wave maps equation
\begin{equation*}
      \Box \phi= \left(|\phi_t|^2- |\phi_x|^2\right)\phi+ a(x) \phi_t, 
\end{equation*}
decays exponentially:
\begin{equation*}
    E(t)\leq C e^{-c t} E(0), \; \forall t\in (0, +\infty).
\end{equation*}
\end{theorem}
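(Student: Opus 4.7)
The approach I would adopt is the classical reduction of exponential stabilization to an observability inequality, established by a contradiction-compactness-uniqueness argument. By the dissipation identity already computed in the excerpt, it suffices to exhibit $T = T(\nu) \ge 2\pi$ and $C = C(\nu) > 0$ such that every solution of \eqref{equation:dwm} with $E(0) \le 2\pi - \nu$ satisfies
\begin{equation}\label{eq:planobs}
E(0) \le C \int_0^T \int_{\S} a(x)\,|\phi_t|^2\, dx\, dt.
\end{equation}
Indeed, \eqref{eq:planobs} combined with the dissipation identity gives $E(T) \le (1-2/C)E(0)$, and since $\{E \le 2\pi - \nu\}$ is forward invariant under the damped flow, iterating on $[kT,(k+1)T]$ produces the asserted exponential decay with rate $c = -T^{-1}\log(1-2/C)$, the constants being effectively computable as soon as those in \eqref{eq:planobs} are.

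To establish \eqref{eq:planobs} I would argue by contradiction. Assuming failure yields solutions $\phi^{(n)}$ with $E_n(0) \le 2\pi - \nu$ and $\int_0^T\!\!\int_\S a\,|\partial_t\phi^{(n)}|^2 \to 0$ while $E_n(0) \ge \alpha > 0$; the residual case $\alpha_n \to 0$ is closed separately by rescaling $\tilde\phi^{(n)} = \phi^{(n)}/\sqrt{E_n(0)}$ and invoking the observability of the \emph{linear} damped wave equation on the circle, where GCC is automatic since $\omega_0$ is a non-empty open interval and $T \ge 2\pi$. The Keel--Tao well-posedness yields uniform bounds for $\phi^{(n)}$ in $L^\infty_t(\dot H^1_x \times L^2_x)$. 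The technical heart, and what I expect to be the main obstacle, is to upgrade weak-$\ast$ to strong convergence in the energy space so as to pass to the limit in the quadratic nonlinearity $(|\phi_t|^2-|\phi_x|^2)\phi$. In one spatial dimension the equation is energy subcritical and the nonlinearity is a null form in the characteristic variables $x\pm t$; combined with the Rellich embedding $H^1_x(\S)\hookrightarrow C^0_x(\S)$, a characteristic decomposition should give strong convergence of $\phi^{(n)}$ in $C^0_{t,x}$ and of the derivatives in $L^2_{t,x}$, so that the limit $\phi^{(\infty)}$ solves the damped wave maps equation with $a(x)\,\partial_t\phi^{(\infty)} \equiv 0$ on $[0,T]\times\S$.

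The final step is a unique continuation together with the classification of harmonic maps. Differentiating the geometric equation in $t$ produces a linear wave equation with smooth coefficients for $\psi := \partial_t\phi^{(\infty)}$, and $\psi \equiv 0$ on $[0,T]\times\omega$; the explicit d'Alembert decomposition along characteristics on the cylinder (equivalently, Holmgren's theorem, which applies since $T \ge 2\pi$ exceeds the diameter of $\S\setminus\omega$) propagates the vanishing to all of $[0,T]\times\S$. Hence $\phi^{(\infty)}$ is $t$-independent and satisfies the harmonic-map equation $-\partial_x^2\phi = |\partial_x\phi|^2\phi$ into $\mathbb{S}^k$. Such maps are precisely the constant-speed closed geodesics on $\mathbb{S}^k$, with positive energies $2\pi n^2$, $n\in\mathbb{N}_{\ge 1}$; the smallest positive value is $2\pi$, attained on a great circle traversed once. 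Since $E(\phi^{(\infty)}) \le 2\pi - \nu$, we must have $\phi^{(\infty)} \equiv \mathrm{const}$ and $E(\phi^{(\infty)}) = 0$, contradicting the strong convergence $E_n(0) \to E(\phi^{(\infty)})$ in view of $E_n(0) \ge \alpha > 0$. This is precisely where the threshold $2\pi$ enters, and where the sharpness of the statement originates.
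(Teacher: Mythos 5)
Your plan is a genuinely different route from the paper's. You reduce, correctly, to the observability inequality $E(0)\leq C\int_0^T\int_{\S}a|\phi_t|^2\,dx\,dt$ and propose to establish it by the classical contradiction--compactness--uniqueness argument: extract a weak limit along a minimizing sequence, upgrade to strong convergence in the energy space using the null-form structure of the quadratic nonlinearity, invoke unique continuation for the linearized equation, and appeal to the classification of harmonic maps $\S\to\mathbb{S}^k$ (closed constant-speed geodesics, energies $2\pi n^2$) to conclude that the limit is constant when its energy is below $2\pi$. This would indeed establish the \emph{existence} of constants $C,c$ with $E(t)\leq Ce^{-ct}E(0)$, and it is a clean conceptual account of why $2\pi$ is the threshold. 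But it cannot deliver the clause in Theorem~\ref{thm:main} that $C$ and $c$ are \emph{effectively computable}: a contradiction--compactness argument gives no quantitative handle on the observability constant, and no algorithm recovers one from it. The paper singles this out as the motivation for its approach and instead proves three quantitative propositions (Propositions~\ref{prop:1w}--\ref{prop:3}) with explicitly tracked constants. In particular, the role your argument assigns to the harmonic-map classification is played in the paper by a Fourier resonance computation in Proposition~\ref{prop:1w}: if $\phi_t$ is small in $L^\infty_x L^2_t$, the time-averaged state $\tilde{\phi}$ nearly solves $\tilde{\phi}_{xx}+\frac{E(0)}{2\pi}\tilde{\phi}=0$, and for $E(0)\leq 2\pi-\sqrt{\mu}$ the $|n|=1$ resonance is controlled, forcing $|\tilde{\phi}(0)|\leq 1/2$ and contradicting $|\tilde{\phi}|\approx|\phi|=1$. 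Proposition~\ref{prop:2} then propagates a local $\int a|\phi_t|^2$ bound to a global $L^\infty_x L^2_t$ bound on $\phi_t$ by a quantitative bootstrap along characteristics, and Proposition~\ref{prop:3} handles the small-energy regime perturbatively, which is what your rescaling aims at.

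Two smaller points. Holmgren's theorem, which you invoke for unique continuation, requires analytic coefficients, whereas the potential in the equation for $\psi=\partial_t\phi^{(\infty)}$ is built from $\phi^{(\infty)}_x,\phi^{(\infty)}_t$ and is a priori only of energy regularity; in $(1{+}1)$ dimensions unique continuation for $T\geq 2\pi$ does hold with rough coefficients via characteristic energy propagation or Carleman estimates, so this is repairable, but the appeal to Holmgren as stated is incorrect. By contrast, the effectivity issue is structural: no variant of the compactness argument recovers the ``effectively computable'' clause, so as written your plan proves a strictly weaker statement than the one asserted.
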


The advantages of this stabilization result are twofold. \\
First, we would like to emphasize its quantitative feature. In fact the proof is based on a quantitative version of the observability inequality \eqref{eq:int:obin}. In the literature usually such observability inequalities are derived from   compactness/uniqueness arguments of contradiction type, and consequently,  in many circumstances we do not have enough knowledge on the exact value of those constants. This argument is standard  for stability problems in control theory including   KdV equations \cite{rosier97}, the semilinear wave equations \cite{Dehman-Lebeau-Zuazua, Laurent-2011}, the (nonlinear) Schrödinger equations \cite{DGL-2006} among others.  Is there a quantitative alternative to the  compactness/uniqueness argument? In  \cite{Krieger-Xiang-kdv} the authors have introduced a method for this purpose for KdV equations, where the proof is based on smoothing effects of semigroups and explicit flux estimates. Theorem~\ref{thm:main} provides another instance of bypassing compactness/uniqueness arguments.\\
Second, we shall mention that  this analysis framework is based on the nonlinear structure of wave maps. Indeed, as we will see in Section~\ref{sec:2:structure}, Proposition~\ref{prop:1w}, the  \textit{observation} becomes weaker  when the wave map's energy approaches the energy threshold $2\pi$ which perfectly fits the limitation on damping caused by harmonic maps. Such a phenomenon does not exist for the defocussing nonlinear wave equations.

\subsection{Some comments on global results}
\;

Let us briefly turn to the case when we replace the special target $\mathbb{S}^k$ by a general Riemannian target $\mathcal{M}$. By Nash's embedding theorem, we may assume that $\mathcal{M}$ is isomtrically embedded into some $\mathbb{R}^k$. 
Notice that solutions of the controlled wave maps equation belong to $C([0, T]; H^1_x \times L^2_x (\S))$ taking values in $\mathcal{M}$. Due to the Sobolev embedding $H^1(\S)\hookrightarrow C(\S)$, for any time $t\in [0, 1]$ the state $\phi(t\cdot T)\in H^1(\S)$ can be regarded as a curve (loop) on $\mathcal{M}$:
\begin{align*}
\gamma^t(x): \S&\rightarrow \mathcal{M}, \\
             x&\mapsto \gamma^t(x):= \phi(t\cdot T, x).
\end{align*}
This implies that $\gamma^0$ deforms to $\gamma^1$ continuously, namely $\gamma^t$ is a homotopy from $\gamma_0$ to $\gamma_1$. This gives a necessary condition on global exact controllability: \textit{the wave maps equation can only be globally exactly controllable if the fundamental group $\pi_1(\mathcal{M})$ is trivial.} As a direct consequence of this  condition we notice that  Theorem~\ref{mainthm} is optimal in the case with $k=1$; however, this is probably not the case for $k\geq 2$ despite the fact that harmonic maps appear as obstruction for global damping stabilization.

Moreover,  this homotopy information also motivates the  following conjecture:
\begin{conj}
Let $(\mathcal{M}, g)$ be a Riemannian manifold. The controlled wave maps equation $\phi: \mathbb{R}\times \S\rightarrow \mathcal{M}$ is globally exactly controllable in the homotopic sense. More precisely, for any pair of states $u[0]$ and $v[0]: \S\rightarrow \mathcal{M}\times T\mathcal{M}$ belonging to $H^1_x\times L^2(\S)$ such that $u(x)$ and $v(x)$ are homotopic, we are able to construct a $\mathbb{R}^k$-valued control $f\in L^2_{t, x}([0, T]\times \S)$ with some $T>0$ such that the unique solution of the controlled wave maps equation with the initial state $\phi[0]= u[0]$ and the control $f$ verifies $\phi[T]= v[0]$.
\end{conj}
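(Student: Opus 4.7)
The plan is to extend the \emph{global-stabilization/local-control} strategy of Theorem~\ref{mainthm} to an arbitrary target $\calM$, which by Nash's theorem we isometrically embed into some $\R^N$; the control term is projected onto $T_\phi\calM$ via $f^{\phi^\perp}$, so that $\phi$ stays in $\calM$. The homotopy hypothesis $[u]=[v]$ (as free loops) is what will allow us to bridge the initial and final states within a common topological sector.

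\textbf{Step 1 (stabilization within a homotopy class).} The energy identity used to derive Theorem~\ref{thm:main} is insensitive to the target, so the damped equation still dissipates $E(t)=\int_\S(|\phi_t|^2+|\phi_x|^2)\,dx$ monotonically. What is lost in a general target is the clean threshold $2\pi$: I would expect $E(t)$ to decrease down to $E_{\min}([u]):=\inf\{E(w,0): w\in[u]\}$, the minimal energy of a harmonic representative of the homotopy class, following the Proposition~\ref{prop:1w}-style analysis of the quantitative observability inequality in Section~\ref{sec-stabilizationstep} but with $2\pi$ replaced by $E_{\min}([u])$. Applied forward in time to $u[0]$ and backward in time to $v[0]$, this reduces the problem to local control between two states lying in arbitrarily small $H^1_x\times L^2_x$-neighborhoods of the moduli space $\calH_{[u]}$ of harmonic representatives in $[u]=[v]$.

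\textbf{Step 2 (local controllability around harmonic maps) and Step 3 (concatenation).} Here one would generalize Theorem~\ref{thm:smallcontrolwm} from trivial equilibria $(p,0)$ to arbitrary stationary equilibria $(\gamma,0)$ with $\gamma\in\calH_{[u]}$. The linearization is a one-dimensional wave equation on the pull-back bundle $\gamma^{-1}T\calM$ with a smooth zeroth-order potential built from the second fundamental form and the curvature of $\calM$ along $\gamma$; HUM/multiplier methods give observability on $\omega\subset\S$ in time $2\pi$, modulo a finite-dimensional kernel corresponding to tangent directions to $\calH_{[u]}$ at $\gamma$. The missing directions are recovered by moving the base point along $\calH_{[u]}$, in the spirit of modulation theory; the iterative Picard construction of Section~\ref{sec:controlstep} then transfers this linear result to the full nonlinear geometric system, yielding a local controllability radius $\rho(\gamma)>0$. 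Under the hypothesis that $\calH_{[u]}$ is path-connected, one then covers a continuous path joining the two endpoints produced by Step 1 by finitely many such neighborhoods, and concatenates the controls in finite time.

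\textbf{Main obstacle.} The principal difficulty is Step 2: handling the finite-dimensional kernel of the linearization along the moduli space $\calH_{[u]}$ in a quantitatively uniform manner, and ensuring a uniform lower bound on $\rho(\gamma)$ as $\gamma$ varies in $\calH_{[u]}$. When $\calH_{[u]}$ is disconnected, bridging distinct connected components would require crossing an energy barrier, which may be genuinely obstructed and could force the conjecture to fail without additional hypotheses on $\calM$. Moreover, the stabilization step in a non-trivial class no longer benefits from the clean $2\pi$ observability threshold exploited in Section~\ref{sec:2:structure}, and controlling the $\omega$-limit set of the damped flow along a possibly non-compact moduli of harmonic maps appears to require genuinely new ideas going beyond the techniques developed here for $\mathbb{S}^k$.
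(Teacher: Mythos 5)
The statement you are addressing is posed in the paper explicitly as a \emph{Conjecture}; no proof is supplied, and the authors only offer the homotopy-invariance observation as motivation for it. There is therefore no paper proof for your proposal to match or diverge from. Your outline is a natural extrapolation of the global-stabilization/local-control template developed for $\mathbb{S}^k$, and to your credit you flag the main obstructions yourself and do not claim a complete argument.

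The gaps you identify are real, and one of them is worth sharpening because as written your Step~1 already overclaims: the assertion that the damped flow drives $E(t)$ down to $E_{\min}([u])$ is unjustified. Monotone energy decay plus a unique-continuation/observability argument gives, at best, accumulation on the set of stationary (harmonic) states in the class, with no reason for the limit to be an energy minimizer nor for it to be the same limit when applied forward from $u[0]$ and backward from $v[0]$. In the paper's $\mathbb{S}^k$ setting this issue is sidestepped because below energy $2\pi$ there simply are no non-trivial harmonic maps, so the damped flow has nowhere to stagnate; in a non-trivial free homotopy class there is no such exclusion, and the flow can plausibly stall at a non-minimizing harmonic map, leaving Step~2 to control between two equilibria that may lie in different connected components of $\calH_{[u]}$. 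Combined with the Jacobi-field kernel of the linearization (which requires modulation in a way that is not evidently uniform over $\calH_{[u]}$) and the possible disconnectedness or non-compactness of $\calH_{[u]}$, these are precisely the reasons the statement remains a conjecture. In short, what you have is a reasonable research plan rather than a proof, and the paper itself does not attempt one.
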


Finally, it is important to comment that in this framework we choose to limit ourselves to large time controllability properties and leave the more refined \textit{small-time controllability} or \textit{optimal-time controllability} problems to later investigations, namely to control the states from one to another in some optimal time or any possible small time (when 0 is the optimal time). For example, the incompressible Euler equation is exact controllable in small time \cite{Coron-1996-Euler, Glass-2000}, while  the transport equation with boundary control is exact controllable if and only if the control time is larger than $L$,
$$y_t+ y_x=0, \; y(t, 0)= u(t), \; \forall x\in (0, L).$$  
Due to the finite speed of propagation it is not possible to get small-time controllability.  But maybe we can expect (global) controllability of wave maps in some optimal time period like  $2\pi$.

\section{Quantitative semi-global stabilization of the damped wave maps}\label{sec-stabilizationstep}
This section is devoted to the proof of semi-global stabilization of the damped wave maps equation. 
This  stability analysis  is based on the damping effect and the quantitative characterization of semi-global  observability inequalities \eqref{eq:int:obin}.   More precisely, in Section~\ref{sec:2:structure} we show that several  properties lead to the stabilization result, namely Propositions~\ref{prop:1w}--\ref{prop:3}. Then, after making some preliminary preparation in  Section~\ref{subsec:prel},  these propositions are successively proved in Sections~\ref{subsec:prop1}--\ref{subsec:prop3}.
\subsection{Strategy of the semi-global stabilization}\label{sec:2:structure}
The stabilization result, $i.e.$ Theorem~\ref{thm:main},  is a consequence the following three propositions.

\begin{prop}\label{prop:1w}
There exists some $c_b\in (0, 1)$ effectively computable such that, for any $\mu\in (0, \pi)$ and for any initial state $\varphi[0]: \S\rightarrow \mathbb{S}^k\times T\mathbb{S}^k$ satisfying 
\begin{gather}
    \mu\leq E(0)\leq 2\pi-\sqrt{\mu},
\end{gather}
the unique 
solution of the damped wave maps equation \eqref{equation:dwm},
\begin{equation*}
   \Box \phi= \left(|\phi_t|^2- |\phi_x|^2\right)\phi+ a(x) \phi_t, \;     \phi[0]= \varphi[0],
\end{equation*}
verifies
\begin{equation*}
   \|\phi_t\|_{L^{\infty}_x(\S; L^2_t(0, 3\pi))}^2> \delta E(0),
\end{equation*}
where $\delta= \delta(\mu)= c_b \mu>0$.
\end{prop}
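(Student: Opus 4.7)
I would argue by contradiction, assuming $\|\phi_t\|_{L^\infty_x(\S;L^2_t(0,3\pi))}^2 \le c_b\mu E(0)$ for a small absolute constant $c_b>0$ to be fixed. The idea is that this smallness forces the trajectory to remain, on the whole interval $[0,3\pi]$, close to a fixed map $\psi:\S\to\mathbb{S}^k$ which is, in turn, an approximate harmonic map. This will contradict the energy window $E(0)\in[\mu,\,2\pi-\sqrt\mu]$ through an energy gap for harmonic maps $\S\to\mathbb{S}^k$, whose critical values are $\{2\pi n^2:n\in\mathbb{N}_0\}$.

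First, I translate the pointwise-in-$x$ smallness of $\phi_t$ into uniform smallness of the orbit. Cauchy--Schwarz in $t$ gives $|\phi(t,x)-\phi(s,x)|^2 \le |t-s|\,\|\phi_t(\cdot,x)\|_{L^2_t(0,3\pi)}^2 \le 3\pi c_b\mu E(0)$ for all $x\in\S$ and $t,s\in[0,3\pi]$, so $\phi(t,\cdot)$ stays $L^\infty_x$-close to $\varphi:=\phi(0,\cdot)$. The damping identity combined with $\int_0^{3\pi}\|\phi_t\|_{L^2_x}^2\le 2\pi c_b\mu E(0)$ yields $|E(t)-E(0)|\lesssim c_b\mu E(0)$, and a mean-value argument produces $t_*\in[0,3\pi]$ with $\|\phi_t(t_*)\|_{L^2_x}^2\le\tfrac{2}{3}c_b\mu E(0)$. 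Setting $\psi:=\phi(t_*,\cdot)$, the Dirichlet energy $\|\psi_x\|_{L^2}^2$ then lies in a slight thickening of $[\mu,\,2\pi-\sqrt\mu]$. Rewriting the wave-map equation at $t_*$ as
\[
\psi_{xx}+|\psi_x|^2\psi \;=\; \phi_{tt}(t_*)+|\phi_t(t_*)|^2\psi+a(x)\phi_t(t_*),
\]
the two rightmost terms are directly small. To handle $\phi_{tt}(t_*)$ I would test the equation against an arbitrary tangential field $v(x)\perp\psi(x)$, integrate over a short window $[t_*-\tau,t_*+\tau]$ whose endpoints are chosen (by a further pigeonhole) to satisfy $\|\phi_t(t_*\pm\tau)\|_{L^2_x}^2\lesssim c_b\mu E(0)$, integrate by parts in $t$ to convert $\phi_{tt}$ into boundary contributions involving $\phi_t$, and use the $L^\infty_x$-closeness of $\phi(t)$ to $\psi$ to replace $\phi(t)$ by $\psi$ in the quadratic nonlinearity at the cost of controllable errors. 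The outcome is a defect estimate $\|\psi_{xx}+|\psi_x|^2\psi\|_{H^{-1}(\S)}^2 \le C c_b\mu E(0)$.

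The main obstacle, and the crux of the argument, is then to prove a matching quantitative \emph{energy gap} for maps $\psi:\S\to\mathbb{S}^k$: whenever $\|\psi_x\|_{L^2}^2\in[\mu/2,\,2\pi-\sqrt\mu/2]$, one has $\|\psi_{xx}+|\psi_x|^2\psi\|_{H^{-1}(\S)}^2 \gtrsim \mu$. In the low-energy regime, since $\psi_x$ has vanishing $\mathbb{R}^{k+1}$-mean on $\S$, the Poincaré inequality yields $\|\psi_{xx}\|_{L^2}^2\ge\|\psi_x\|_{L^2}^2\ge\mu/2$, and the normal piece $|\psi_x|^2\psi$ is absorbed by a Gagliardo--Nirenberg estimate. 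Near the threshold $2\pi$, the sole obstruction is the harmonic map $\mathcal{Q}(x)=(\cos x,\sin x,0,\ldots,0)$, and the parametrization $\psi=\sqrt{1-h^2}\,\mathcal{Q}+h\,e_{k+1}$ reveals that the only energy-decreasing normal direction is the uniform push-off $h\equiv\mathrm{const}$, along which $2\pi-E(\psi)\sim\|\psi-\mathcal{Q}\|_{L^2}^2$ while the defect is linear in $\|\psi-\mathcal{Q}\|_{L^2}$. This produces $\|\psi_{xx}+|\psi_x|^2\psi\|_{L^2}^2\gtrsim 2\pi-E(\psi)\ge\sqrt\mu\gtrsim\mu$ in the admissible range $\mu\in(0,\pi)$. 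Combining the approximate-harmonic-map bound with this gap for $c_b$ sufficiently small contradicts the initial assumption and proves the proposition.
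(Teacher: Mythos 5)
Your proposal is in a genuinely different spirit from the paper's, and it contains a substantial unestablished step that is really the whole difficulty.

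You recast the problem as a quantitative stability statement for (approximate) harmonic maps: pick a good time slice $\psi=\phi(t_*,\cdot)$, show it is an approximate harmonic map in the sense that $\|\psi_{xx}+|\psi_x|^2\psi\|_{H^{-1}}^2\lesssim c_b\mu E(0)$, and then invoke a lower bound $\|\psi_{xx}+|\psi_x|^2\psi\|_{H^{-1}}^2\gtrsim\mu$ valid for every $\psi:\S\to\mathbb S^k$ with $\|\psi_x\|_{L^2}^2\in[\mu/2,\,2\pi-\sqrt\mu/2]$. That last estimate — your ``quantitative energy gap'' — is the crux, and your sketch does not establish it. You only discuss two endpoint regimes. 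In the low-energy regime the Poincar\'e argument is fine. But in the near-$2\pi$ regime you only analyze the one-parameter family $\psi=\sqrt{1-h^2}\,\mathcal Q+h e_{k+1}$ with $h$ constant, which is far from exhausting the maps with energy close to $2\pi$: a general competitor can be any small perturbation of a rotated copy of $\mathcal Q$, and your claim that the uniform push-off is ``the only energy-decreasing normal direction'' amounts to a spectral analysis of the Jacobi operator at $\mathcal Q$ (including its kernel of rotational symmetries) that you have not carried out. Most importantly, you say nothing about the bulk of the window, i.e.\ energies bounded away from both $0$ and $2\pi$; there the lower bound cannot come from a perturbative expansion around either endpoint, and a compactness argument would not be quantitative, which is precisely the point of the Proposition.

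The paper avoids this difficulty by never confronting the nonlinear harmonic-map defect directly. Its key mechanism, which you are missing, is the observation (Steps 1--2 of the paper's proof) that the time-averaged quantity $\int_{\R}|\phi_x|^2(t,x)\psi(t)\,dt$ is nearly \emph{independent of $x$} and close to $E(0)/(2\pi)$; this is obtained by multiplying the equation by $\phi_x$ and integrating, using the smallness of $\phi_t$. With that in hand, the time-averaged map $\tilde\phi(x)=\int\phi(t,x)\psi(t)\,dt$ approximately satisfies the \emph{linear constant-coefficient} ODE $\tilde\phi_{xx}+c_E\tilde\phi\approx 0$ with $c_E=E(0)/(2\pi)$. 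The required ``gap'' then reduces to the elementary spectral gap of $-\partial_{xx}$ on the circle: because $c_E\in\big(\mu/(2\pi),\,1-\sqrt\mu/(2\pi)\big)$ is quantitatively separated from $0$ and $1$, Fourier expansion forces $\tilde\phi$ to be small, contradicting $|\tilde\phi|\approx 1$. This is exactly the place where the bounds $E(0)\geq\mu$ and $E(0)\leq 2\pi-\sqrt\mu$ are used, and it replaces your would-be nonlinear energy-gap lemma by a one-line Fourier estimate. If you want to rescue your approach, you would need to first prove (as the paper does) that $|\psi_x|^2$ is approximately constant in $x$; once that is in place, your defect equation also linearizes and the gap becomes the same spectral statement. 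Without that step, the nonlinear gap you are appealing to is an open sub-problem, not a quotable fact.
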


\begin{prop}\label{prop:2}
There exist some $p>0$ and $C_p>0$ effectively computable such that, for any $\delta\in (0, 1)$ we have $\varepsilon_0= \varepsilon_0(\delta)= C_p \delta^p$ such that, if  some solution of the damped wave maps equation \eqref{equation:dwm} verifies
\begin{gather}
    E(0)\leq 2\pi,\\
    \int_{-16\pi}^{16\pi}\int_{\mathbb{S}^1}a(x)|\phi_t|^2(t, x) dxdt\leq \varepsilon_0  E(0), \label{as:eq:p2}
\end{gather}
then 
\begin{equation}
     \|\phi_t\|_{L^{\infty}_x(\S; L^2_t(0, 3\pi))}^2\leq \delta E(0).
\end{equation}
\end{prop}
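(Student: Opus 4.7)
\textbf{Proof plan for Proposition~\ref{prop:2}.}

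My plan is to transfer the $L^2_{t,x}$ smallness of $\phi_t$ on the damped strip $(-16\pi,16\pi)\times\omega_0$---a direct consequence of \eqref{as:eq:p2} combined with $a(x)\ge a_0>0$ on $\omega_0$---into an $L^\infty_xL^2_t((0,3\pi)\times\S)$ bound on $\phi_t$, using the one-dimensional null-characteristic structure on the circle. The damping identity applied separately on $(0,t)$ and $(t,0)$ together with \eqref{as:eq:p2} first yields the uniform almost-conservation $E(t)\in[(1-C\varepsilon_0)E(0),(1+C\varepsilon_0)E(0)]$ for $t\in(-16\pi,16\pi)$. Setting $R=\phi_t+\phi_x$, $L=\phi_t-\phi_x$, so $\phi_t=(R+L)/2$ and $|\phi_t|^2-|\phi_x|^2=\la R,L\ra$, the equation decomposes as
\begin{equation*}
(\partial_t-\partial_x)R=-G,\qquad (\partial_t+\partial_x)L=-G,\qquad G=\la R,L\ra\phi+a(x)\phi_t,
\end{equation*}
and integration along backward null rays gives, for any $s_0\le t_0$,
\begin{equation*}
R(t_0,x_0)=R(s_0,x_0+t_0-s_0)-\int_{s_0}^{t_0}G(s,x_0+t_0-s)\,ds,
\end{equation*}
with the symmetric identity for $L$.

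The key propagation step exploits that $\S$ has length $2\pi$ while the time window has length $32\pi$: for every target $(t_0,x_0)\in(0,3\pi)\times\S$, the set $S(t_0,x_0):=\{s_0\in(-16\pi,t_0):(x_0+t_0-s_0)\bmod 2\pi\in\omega_0\}$ has measure $\sim 8|\omega_0|$, distributed throughout the full window. Averaging the identity for $R$ over $s_0\in S(t_0,x_0)$ and applying Cauchy--Schwarz yields
\begin{equation*}
|R(t_0,x_0)|^2\lesssim \|R\|_{L^2((-16\pi,16\pi)\times\omega_0)}^2+\tfrac{1}{|S(t_0,x_0)|}\int_{S(t_0,x_0)}\Bigl(\int_{s_0}^{t_0}|G(s,x_0+t_0-s)|\,ds\Bigr)^2 ds_0,
\end{equation*}
and similarly for $L$; integrating in $t_0$ and taking $\sup_{x_0}$ bounds $\|R\|_{L^\infty_xL^2_t}$ and $\|L\|_{L^\infty_xL^2_t}$, hence $\|\phi_t\|_{L^\infty_xL^2_t(0,3\pi)}$, by the $L^2_{t,x}$-norms of $R,L$ on the damping strip plus an integrated-source term. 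To upgrade the hypothesis, which only controls $\phi_t$ on $\omega_0$, to control of $R$ and $L$ there, I will multiply the equation by $\chi(x)\phi_x$ for $\chi\in C^\infty_c(\omega)$ equal to one on $\omega_0$, integrate over $I\times\S$ with $I\Subset(-16\pi,16\pi)$, and extract $\|\phi_x\|_{L^2(I\times\omega_0)}^2\lesssim \varepsilon_0E(0)+(\text{nonlinear error})$ from the resulting identity and the energy dissipation.

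The main obstacle is the quadratic source $\la R,L\ra\phi$, whose pointwise size is comparable to the energy density and which a naive estimate only bounds by $O(E(0))$. To gain smallness I will exploit the null-form structure: in null coordinates $(u,v)=(x-t,x+t)$, $R$ depends essentially on $v$ and $L$ on $u$ up to lower-order source corrections, so along a backward right-null ray $L$ is transported almost trivially while $R$ is integrated transversally, producing a schematic bound
\begin{equation*}
\Bigl\|\int \la R,L\ra\phi\,ds\Bigr\|_{L^\infty_xL^2_t(0,3\pi)}\lesssim \|R\|_{L^\infty_xL^2_t}\,\|L\|_{L^2_xL^\infty_t}\,\|\phi\|_{L^\infty_{t,x}},
\end{equation*}
and symmetrically. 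One factor can be pushed to $\varepsilon_0$-size by the smallness on $\omega_0$ combined with the propagation step above, while the other is absorbed by $E(0)\le 2\pi$ and the embedding $H^1(\S)\hookrightarrow L^\infty(\S)$. A Grönwall-type absorption closing the coupled $R,L$ system, together with an interpolation between the $L^2_{t,x}$-smallness on the damping strip and the uniform energy bound, produces the final estimate with $\varepsilon_0=C_p\delta^p$ for some effectively computable $p\in(0,1]$.
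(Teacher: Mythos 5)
Your proposal has a genuine gap, and it concerns exactly the point the paper's own proof is built to avoid. The hypothesis \eqref{as:eq:p2} gives smallness of $\phi_t$ on $\omega_0$, and the conclusion only asserts smallness of $\phi_t$; the energy $E(0)$ itself is \emph{not} small, and $\phi_x$ cannot be made small on $\omega_0$ no matter how small $\varepsilon_0$ is (near the harmonic map $\mathcal{Q}(x)=(\cos x,\sin x,0,\dots)$ one has $\phi_t\equiv 0$ yet $\phi_x$ of unit size everywhere). Your Riemann invariants $R=\phi_t+\phi_x$ and $L=\phi_t-\phi_x$ both carry $\phi_x$, so the intermediate claim ``$\|\phi_x\|_{L^2(I\times\omega_0)}^2\lesssim\varepsilon_0E(0)+\text{(nonlinear error)}$'' is simply false, and the multiplier identity obtained from pairing the equation with $\chi\phi_x$ will not yield it (that identity produces $\int|\phi_x|^2\chi'$, not $\int|\phi_x|^2\chi$). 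Consequently the initial data you feed into the averaged backward-null-ray transport is $O(\sqrt{E(0)})$, not $O(\sqrt{\varepsilon_0 E(0)})$, and the conclusion degenerates to the trivial bound $\|\phi_t\|^2_{L^\infty_xL^2_t}\lesssim E(0)$.

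The paper's proof is engineered precisely around this obstruction. Instead of propagating Riemann invariants, Lemma~\ref{lem:keypropa} propagates $\phi_t$ itself, and for the transverse ``initial data'' at $x_0$ it controls $\phi_{tx}$ in $H^{-1}_t$ rather than $\phi_x$ in $L^2_t$. This distinction is essential: in the characteristic representation of $\phi_t$ the dangerous term is $f_3=-\tfrac12\int_{t-d}^{t+d}\phi_{tx}(s,x_0)\,ds=\tfrac12\bigl(\phi_x(t-d,x_0)-\phi_x(t+d,x_0)\bigr)$, which is of size $\sqrt{E(0)}$ if evaluated directly, but whose $L^2_t$-norm can be bounded by $\|\langle\partial_t\rangle^{-1}(\eta\,\phi_{tx}(\cdot,x_0))\|_{L^2_t}$ via Fourier splitting into $|\lambda|\le 1$ and $|\lambda|>1$; and the latter $H^{-1}_t$-quantity \emph{is} small under \eqref{as:eq:p2} (Lemma~\ref{lemma:choose}, which itself uses the wave equation to trade $\phi_{xx}$ for $\phi_{tt}$ plus lower-order terms inside $\langle\partial_t\rangle^{-1}$). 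Lemma~\ref{lem:keypropa} then couples a bootstrap for $\phi_t$ with a re-selection of a new good slice $\bar x$ at which both $\phi_t$ and $\langle\partial_t\rangle^{-1}(\eta\phi_{tx})$ are controlled with a half-power loss, and iterating $O(1/S_0)$ times around the circle accounts for the exponent $p$ in $\varepsilon_0=C_p\delta^p$. Your averaging over many passages through $\omega_0$ and the null-form gain on $\langle R,L\rangle$ are sensible ideas in themselves, but neither addresses the root fact that $\phi_x$ is uncontrolled; to repair the argument you would have to replace ``control of $R,L$ on $\omega_0$'' by ``control of $\phi_t$ in $L^2_t$ and $\phi_{tx}$ in $H^{-1}_t$ at one slice'' and rework the transport step accordingly, which is essentially the paper's route.
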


\begin{prop}[Low-energy  exponential stability]\label{prop:3}
There exist some $\mu_0>0$ and $c_{\mu_0}$ effectively computable such that, if some  $\phi$, a solution of the damped wave maps equation \eqref{equation:dwm}, verifies
\begin{gather*}
     E(0)\leq \mu_0,
\end{gather*}
then 
\begin{equation*}
   c_{\mu_0} E(0)\leq  \int_{0}^{16\pi}\int_{\mathbb{S}^1}a(x)|\phi_t|^2(t, x) dxdt.
\end{equation*}
\end{prop}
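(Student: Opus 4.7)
I would prove the low-energy observability by the multiplier method, using crucially the wave-map constraint $|\phi|^2\equiv 1$ to eliminate the nonlinearity from every relevant identity, thereby avoiding both compactness arguments and $L^2_x$ bounds on the nonlinear term. The key algebraic observation is that $|\phi|^2\equiv 1$ implies $\phi\cdot\phi_t\equiv\phi\cdot\phi_x\equiv 0$ pointwise, so the nonlinearity $N(\phi)=(|\phi_t|^2-|\phi_x|^2)\phi$ satisfies $N(\phi)\cdot\phi_t=N(\phi)\cdot\phi_x=0$. Consequently, testing the damped wave maps equation $-\phi_{tt}+\phi_{xx}=a(x)\phi_t+N(\phi)$ against any multiplier of the form $q(x)\phi_x$ or $\chi(x)\phi_t$ produces identities that coincide with those for the purely linear damped wave equation.

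Explicitly, multiplying by $q(x)\phi_x$ and integrating by parts on $[0,T]\times\S$ yields
$$\tfrac{1}{2}\int_0^T\!\!\int_{\S} q'(x)\bigl(|\phi_t|^2+|\phi_x|^2\bigr)\,dx\,dt = -\left[\int_{\S} q\,\phi_t\cdot\phi_x\,dx\right]_0^T - \int_0^T\!\!\int_{\S} a(x)\,q(x)\,\phi_t\cdot\phi_x\,dx\,dt.$$
I would choose a smooth periodic $q$ on $\S$ with $q'\ge c_0>0$ outside $\omega$, with its negative mass concentrated on $\omega$ (compatible with $\int_{\S}q'=0$). Combined with the damping identity $E(0)-E(T)=2\int_0^T\!\!\int_{\S}a|\phi_t|^2$ and a second, equipartition-type multiplier localized on $\omega$ (which again kills $N(\phi)$ pointwise by orthogonality), this produces a bound of the form $\int_0^T E(t)\,dt \le K_1 E(0) + K_2\int_0^T\!\!\int_{\S} a|\phi_t|^2 + |\mathrm{cross}|$. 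The cross term $|\int\!\!\int aq\,\phi_t\cdot\phi_x|$ is controlled by $\|q\|_\infty\sqrt{2TE(0)}\,\bigl(\int\!\!\int a|\phi_t|^2\bigr)^{1/2}$ using the a priori bound $\|\phi_x\|_{L^\infty_tL^2_x}\le\sqrt{2E(0)}$, and hence absorbed via AM--GM when $E(0)\le\mu_0$ is sufficiently small. Choosing $T=16\pi$ (safely above the GCC threshold $2\pi$ on $\S$) then gives $E(0)\le c_{\mu_0}^{-1}\int_0^{16\pi}\!\!\int_{\S} a(x)|\phi_t|^2\,dx\,dt$, with $c_{\mu_0}$ effectively computable.

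\textbf{Main obstacle.} The delicate step is the equipartition-on-$\omega$ argument that converts control of $|\phi_x|^2$ on the support of $a$ into control of $|\phi_t|^2$ modulo boundary data; for the linear wave equation this is classical multiplier bookkeeping, but one must verify it survives intact for the nonlinear equation. The orthogonality $N(\phi)\perp\phi_t,\phi_x$ is precisely what makes this work, so the nonlinearity never appears in the resulting identities. The smallness $E(0)\le\mu_0$ is used only to absorb the single remaining cross-term $\int aq\,\phi_t\cdot\phi_x$ at the end, consistent with the paper's stated goal of bypassing compactness/uniqueness arguments in favor of a fully quantitative proof.
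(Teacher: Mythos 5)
Your plan rests on the claim that, thanks to the constraint $|\phi|^2\equiv 1$, the nonlinearity $N(\phi)=(|\phi_t|^2-|\phi_x|^2)\phi$ is annihilated by every multiplier you use, so the multiplier bookkeeping is literally that of the linear damped wave equation and the only use of smallness is to absorb a bilinear cross term. That reasoning fails at the equipartition multiplier. The $q(x)\phi_x$ and $\chi(x)\phi_t$ multipliers do indeed kill $N(\phi)$, since $\phi\cdot\phi_x=\phi\cdot\phi_t=0$. But any equipartition-type multiplier must point in the $\phi$ direction --- the natural choice is $m(x)\phi$ --- and there $N(\phi)\cdot m\phi = m\,(|\phi_t|^2-|\phi_x|^2)\,|\phi|^2 = m\,(|\phi_t|^2-|\phi_x|^2)$ does \emph{not} vanish. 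Worse, it makes the equipartition identity a tautology: differentiating $|\phi|^2\equiv1$ twice gives $\phi\cdot\phi_{tt}=-|\phi_t|^2$ and $\phi\cdot\phi_{xx}=-|\phi_x|^2$, so dotting the equation $-\phi_{tt}+\phi_{xx}=a\phi_t+N(\phi)$ with $m\phi$ yields
\[
\int\!\!\int m\bigl(|\phi_t|^2-|\phi_x|^2\bigr)\,dx\,dt \;=\; \int\!\!\int m\bigl(|\phi_t|^2-|\phi_x|^2\bigr)\,dx\,dt,
\]
with no boundary terms surviving (they carry $\phi_t\cdot\phi=0$). The identity carries no information; you cannot convert $\int\!\!\int_\omega|\phi_x|^2$ into $\int\!\!\int_\omega|\phi_t|^2$ this way. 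There is no multiplier of the form $f(\phi,\nabla\phi)$ that is both tangent to $\phi^\perp$ (so $N(\phi)$ drops) \emph{and} gives a nontrivial equipartition relation, because the geometric force $N(\phi)$ is exactly what balances the radial components of $\phi_{tt},\phi_{xx}$.

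A second, confirming signal that something is missing: your final absorption of the cross term via AM--GM, $K_3\sqrt{E(0)}\sqrt{\int\!\!\int a|\phi_t|^2}\le \tfrac{\eta}{2}E(0)+\tfrac{K_3^2}{2\eta}\int\!\!\int a|\phi_t|^2$, works for \emph{any} $E(0)$; smallness of $E(0)$ plays no role in it. If the rest of your scheme really reduced to linear identities, you would therefore prove the observability inequality with no restriction on the energy, contradicting the harmonic map $\mathcal{Q}(x)=(\cos x,\sin x,0,\dots,0)$, which is a stationary solution with $\phi_t\equiv 0$ and $E(0)=2\pi$, for which the right-hand side vanishes while the left-hand side does not. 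The smallness of $E(0)$ has to enter somewhere substantive, not just in a harmless cross term. In the paper it enters via a perturbative decomposition: write $\phi=\phi_1+\phi_2$, where $\phi_1$ solves the \emph{linear} damped wave equation with the same data (so the known linear observability, Lemma~\ref{lem:lwedecay}, applies), and $\phi_2$, driven by $N(\phi)$ with zero data, satisfies $\|\phi_2[t]\|_{\dot H^1\times L^2}^2\lesssim E(0)^2$ by the energy estimates of Section~\ref{sec-basices}; combining via Young's inequality gives $E(16\pi)\le(1-J_{16\pi}^2)E(0)+C\,E(0)^2$, and smallness of $E(0)$ yields strict decay and hence the observability. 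To repair your argument you would have to abandon the equipartition step or replace it by a genuinely nonlinear ingredient (e.g.\ multiplying by $m(x)(\phi-p_0)$ for a fixed $p_0$ close to the range of $\phi$, whose failure term is $O(\varepsilon)(|\phi_t|^2+|\phi_x|^2)$) --- but at that point you are doing a perturbation off the linear problem, which is exactly what the paper's proof does more cleanly.
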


 Let us quickly comment on how Propositions~\ref{prop:1w}--\ref{prop:3} lead to quantitative exponential decay of the damped wave maps equation, namely Theorem~\ref{thm:main}.  Let $Q>1$ be some given constant such that\footnote{It is easily seen that one may choose $Q$ only in dependence of $\|a\|_{L^\infty}$.} 
\begin{equation}
   Q^{-1} E(-16\pi)\leq E(0)\leq  E(-16\pi).
\end{equation}
 
\begin{proof}[Proof of Theorem~\ref{thm:main}]
We first show that  Proposition~\ref{prop:1w} together with Proposition~\ref{prop:2} implies the asymptotic stability.  Indeed, 
if $E(-16\pi)\in [\mu, 2\pi-\sqrt{\mu}]$, then $E(0)\in [Q^{-1}\mu, 2\pi-\sqrt{\mu}]$. Thanks to Proposition~\ref{prop:1w} we know that 
\begin{equation*}
   \|\phi_t\|_{L^{\infty}_x(-\pi, \pi; L^2_t(0, 3\pi))}^2> \delta E(0),
\end{equation*}
with $\delta=\delta(Q^{-1}\mu)= c_b Q^{-1} \mu$. Next, according to Proposition~\ref{prop:2}, we must have that 
\begin{equation*}
     \int_{-16\pi}^{16\pi}\int_{\mathbb{S}^1}a(x)|\phi_t|^2(t, x) dxdt> \varepsilon_0  E(0),
\end{equation*}
with $\varepsilon_0= \varepsilon_0(\delta)= C_p (c_b Q^{-1}\mu)^p$. Thus, for any $E(-16\pi)\in [\mu, 2\pi-\sqrt{\mu}]$ we have 
\begin{equation*}
     \int_{-16\pi}^{16\pi}\int_{\mathbb{S}^1}a(x)|\phi_t|^2(t, x) dxdt>   C \mu^p E(-16\pi),
\end{equation*}
which implies that, for any $E(-16\pi)\in (0, \pi)$ the following holds:
\begin{equation*}
    E(- 16\pi)- E(16 \pi)= 2\int_{-16\pi}^{16\pi}\int_{\mathbb{S}^1}a(x)|\phi_t|^2(t, x) dxdt>   C E(-16\pi)^{p+1}.
\end{equation*}
This is easily seen to imply asymptotic stability. 
\\

Finally we invoke Proposition~\ref{prop:3} to conclude the desired semi-global exponential stability. Take $\mu$ as $\mu_0$ in the preceding step. 
On the one hand, if $E(-16\pi)\in [\mu_0, 2\pi-\sqrt{\mu_0}]$, then 
\begin{equation*}
     \int_{-16\pi}^{16\pi}\int_{\mathbb{S}^1}a(x)|\phi_t|^2(t, x) dxdt>   C \mu_0^p E(-16\pi).
\end{equation*}
On the other hand, if $E(-16\pi)\in [0, \mu_0]$, then according to Proposition~\ref{prop:3} 
\begin{equation*}
   \int_{-16\pi}^{16\pi}\int_{\mathbb{S}^1}a(x)|\phi_t|^2(t, x) dxdt\geq  c_{\mu_0} E(-16\pi).
\end{equation*}
Hence 
\begin{equation*}
    E(16\pi)\leq \big(1- 2\min\{c_{\mu_0},  C \mu_0^p\}\big) E(-16\pi).
\end{equation*}
\end{proof}

\subsection{Some preliminaries}\label{subsec:prel}
 Without loss of generality we may assume that $a(x)$ is greater than $1$  inside a small interval $\omega_0$, and we may also assume that $\omega_0= (-l_0/2, l_0/2)$.
As usual we interchange the roles of $t$ and $x$, and focus on the diamond shaped region thanks to the finite speed of propagation, 
\begin{equation*}
    D:= \{(t, x): |t+x|\leq 18\pi, |t-x|\leq 18\pi\}
\end{equation*}
which covers the basic domain of interest 
\begin{equation*}
  D_0:= \{(t, x): x\in [-2\pi, 2\pi], t\in [-16\pi, 16\pi]\}.
\end{equation*}
For  ease of presentation we also define 
\begin{equation*}
  D_1:= \{(t, x): x\in [-18\pi, 18\pi], t\in [-18\pi, 18\pi]\},
\end{equation*}
such that $D_0\subset D\subset D_1$.  
Throughout this paper we  focus on (as we may) solutions at energy level regularities, but only work with   $C^{\infty}(D)$  solutions to make sure all operations in the sequel are justified. In other words, for any given initial state
$$(\phi(0, x), \phi_t(0, x))\in H^1_x(\mathbb{S}^1)\times L^2_x(\mathbb{S}^1),$$ 
we study
\begin{gather*}
    \phi(t, x)\in H^1(D),\\
    (\phi_t(t, x), \phi_{t x}(t, x))\in L^2_t(0, T)\times H^{-1}_t(0, T),  \, \forall x\in [-\pi, \pi].
\end{gather*}

\subsubsection{Null coordinates}
In terms of the standard null coordinates
\begin{equation}\label{eq:nullcoor}
    u= x+t,\; v= t-x,
\end{equation}
 the diamond shaped region becomes 
\begin{equation}
    D= \{(u, v): |u|\leq 18\pi, |v|\leq 18\pi\}. \notag
\end{equation}
In the following  we  keep  the notation $\phi$ for functions in null coordinates, $i.e.$ $\phi(t, x)= \phi(u, v)$ provided the relation \eqref{eq:nullcoor}. Thus 
\begin{gather*}
    \phi_u(u, v)= \phi_u(t, x)= \frac{1}{2}\left(\phi_t+ \phi_x\right)(t, x), \; \; \phi_v(t, x)= \frac{1}{2}\left(-\phi_x+ \phi_t\right)(t, x),  \\
    \phi_u\cdot \phi_v= \frac{1}{4} (-|\phi_x|^2+ |\phi_t|^2), \; \;
    \phi_{uv}= \frac{1}{4}(\phi_{tt}- \phi_{xx}).
\end{gather*}
Notice that the equation for wave maps is reduced to
\begin{equation}\label{eq:wmuv}
     -\phi_{uv}= (\phi_u\cdot \phi_v) \phi+ \frac{1}{4} a \phi_t=: F(t, x)= F(u, v).
\end{equation}
As an immediate consequence of this change of variables one can express $\phi$ as follows. 
 By denoting $u_0= y+s, v_0= s-y$, so that $(s, y)$ corresponds to the original $(t,  x)$ coordinates while  $(u_0, v_0)$ corresponds to the $(u, v)$ coordinates,
 we integrate on the triangular region to benefit from the information on the line $t=0$,
\begin{align*}
     \phi_u(u, v)= -\int_{-u}^v F(u, v_0) d v_0+ \phi_u(u, -u)
     = -2\int_{0}^{t} F(s, x+t-s) ds+ \phi_u(u, -u),
 \end{align*}
 and $\phi$ is further given by
 \begin{align*}
    \phi(u, v)&= \int_{-v}^u \int_{-u_0}^v -F(u_0, v_0) dv_0 du_0+ \int_{-v}^u \phi_u(u_0, -u_0) du_0+ \phi(-v, v),\\
    &= 2\int_{-v}^u \int_{-u_0}^v F(s, y) dy ds+ \int_{-v}^u \phi_u(u_0, -u_0) du_0+ \phi(-v, v),\\
    &= 2\int_0^t \int_{x-t+s}^{x+t-s} F(s, y) dy ds+ \int_{-v}^u \phi_u(u_0, -u_0) du_0+ \phi(-v, v).
\end{align*}

For the purpose of controlling the $H^{-1}$-norm of $\phi_{tx}$, we are also interested in the equation satisfied by $\phi_t(t,x)$:
\begin{equation}
    -\phi_{t, tt}+ \phi_{t, xx}= (|\phi_t|^2-|\phi_x|^2)\phi_t+ 2(\phi_t\cdot \phi_{tt}- \phi_x\cdot \phi_{tx})\phi+ a \phi_{tt}. \notag
\end{equation}
Since 
\begin{equation*}
    \phi_u \cdot \psi_v+ \phi_v\cdot \psi_u= \frac{1}{2}(\phi_t\cdot \psi_t-\phi_x\cdot \psi_x),
\end{equation*}
we get 
\begin{equation*}
    -\phi_{t, uv}= (\phi_u\cdot \phi_v)\phi_t+ (\phi_u\cdot\phi_{t, v}+ \phi_v\cdot \phi_{t, u})\phi+ \frac{1}{4}a \phi_{tt}=: G(t, x).
\end{equation*}
Similarly, $\phi_t$ is expressed by 
\begin{equation*}
    \phi_t(t, x)= 2\int_0^t \int_{x-t+s}^{x+t-s} G(s, y) dy ds+ \int_{-v}^u \phi_{t, u}(u_0, -u_0) du_0+ \phi_t(-v, v).
\end{equation*}

\subsubsection{Basic estimates}\label{sec-basices}
Here  we present several straightforward and useful quantitative energy estimates on the damped wave maps equation \eqref{equation:dwm}. Furthermore, by adapting the same proof,  similar estimates can be obtained to the inhomogeneous wave maps equation  \eqref{eq:inhomowavemap} leading to Lemma~\ref{lem:inhwm} (see Section~\ref{sec:strlec} for details).
\begin{gather}
  E(0)\lesssim E(t)\lesssim E(0), \; \forall t\in (-36\pi, 36\pi), \label{es:e1}\\
    \|(\phi_t, \phi_x, \phi_u, \phi_v)\|_{L^2_{t, x}(D)}^2\lesssim E(0), \label{es:e2}\\
    \|(\phi_t, \phi_x)\|_{L^{\infty}_t L^2_x(D_1)}^2\lesssim E(0), \label{es:e32} \\
    \|(\phi_t, \phi_x)\|_{L^{\infty}_x L^2_t(D_1)}^2\lesssim E(0), \label{es:e3}  \\
    \|\phi_u\|_{L^2_u L^{\infty}_v(D)}^2+  \|\phi_v\|_{L^2_v L^{\infty}_u(D)}^2+  \|\phi_u\cdot \phi_v\|_{L^2_{u, v}(D)}\lesssim E(0).  \label{es:e4}
\end{gather}

\begin{proof}
For any given $t\in \R$, the estimates \eqref{es:e1} and \eqref{es:e32} are  direct consequences of the time variation of the energy. This further yields the $L^2_{t, x}$ bound in \eqref{es:e2}, while the $L^2_{u, v}$ bound follows directly.
Now we turn to the proof of the estimate \eqref{es:e3}. Thanks to \eqref{es:e2} we can pick some $\bar x\in (-\pi, \pi)$ such that 
\begin{equation*}
    \int_{-36\pi}^{36\pi} \left(|\phi_t|+ |\phi_x|\right)^2(t, \bar x)dt\lesssim E(0).
\end{equation*}
Consider the propagation of the ``vertical energy" with respect to $x$: by defining 
\begin{equation*}
    \tilde{E}(y):= \int_{-36\pi+|y|}^{36\pi-|y|}  \left(|\phi_t|+ |\phi_x|\right)^2(t, \bar x+ y)dt
\end{equation*}
thanks to integration by parts, we have 
\begin{align*}
 \frac{d}{dy}\tilde{E}(y)&= 2\int_{-36\pi+|y|}^{36\pi-|y|}  \left(\phi_t\cdot \phi_{tx}+ \phi_x\cdot \phi_{xx}\right)(t, \bar x+ y)dt \\
 &\;\;\;\;- \sign(y) \left(\left(|\phi_t|+ |\phi_x|\right)^2(36\pi-|y|, \bar x+ y)+ \left(|\phi_t|+ |\phi_x|\right)^2(-36\pi+|y|, \bar x+ y)\right) \\
 &= 2\int_{-36\pi+|y|}^{36\pi-|y|}  \left( \phi_x\cdot \Box \phi\right)(t, \bar x+ y)dt+ 2 (\phi_t\cdot \phi_x)\Big{|}_{-36\pi+|y|}^{36\pi-|y|} (t, \bar x+ y) \\
 &\;\;\;\;- \sign(y) \left(\left(|\phi_t|+ |\phi_x|\right)^2(36\pi-|y|, \bar x+ y)+ \left(|\phi_t|+ |\phi_x|\right)^2(-36\pi+|y|, \bar x+ y)\right).
\end{align*}
By plugging the wave maps equation into the preceding formula we obtain 
\begin{equation*}
   \sign(y) \frac{d}{dy}\tilde{E}(y)\lesssim \tilde{E}(y),
\end{equation*}
which implies the inequality \eqref{es:e3}.\\

Finally, concerning  \eqref{es:e4}  it suffices to prove the first two inequalities as the last estimate follows as a direct consequence. Thanks to \eqref{es:e2} there exists some $\bar v\in (-18\pi,  18\pi)$ such that 
\begin{equation*}
    \int_{-18\pi}^{18\pi} |\phi_u|^2(u, \bar v) du\lesssim E(0).
\end{equation*}
For  ease of notations we may assume that $\bar v=-18\pi$.  Recalling the wave maps equation under $(u, v)$ coordinate. we get 
\begin{equation*}
    \frac{d}{dv} |\phi_u|^2= 2 \phi_u\cdot \phi_{uv}= -\frac{a}{2} \phi_u\cdot \phi_t.
\end{equation*}
Thus, for any  $u, v\in (-18\pi, 18\pi)$ there is 
\begin{align*}
    |\phi_u(u, v)|^2&\lesssim \left(|\phi_u|(u, -18\pi)+ \int_{-18\pi}^{v}|\phi_t(u, v_0)|d v_0 \right)^2 \\
    &\lesssim |\phi_u|^2(u, -18\pi)+ \int_{-18\pi}^{v}|\phi_t(u, v_0)|^2d v_0. 
\end{align*}
Hence, for  any $v\in  (-18\pi, 18\pi)$
\begin{align*}
     \int_{-18\pi}^{18\pi} |\phi_u|^2(u,  v) du &\lesssim \int_{-18\pi}^{18\pi} \left( |\phi_u|^2(u, -18\pi)+ \int_{-18\pi}^{v}|\phi_t(u, v_0)|^2d v_0 \right)du \\
     &\lesssim E(0).
\end{align*}
Similarly one concludes  estimates on $\|\phi_v\|_{L^2_v L^{\infty}_u}$. 
\end{proof}

\subsection{Proof of Proposition~\ref{prop:1w}}\label{subsec:prop1}
This section is devoted to the proof of Proposition~\ref{prop:1w}, which is divided into four steps.  Let us construct a non-negative  smooth cutoff function $\psi(t)$  such that 
\begin{equation}
    \int_{\R} \psi(t)=1\;  \textrm{ and } \; \psi\;  \supp\; (0, 3\pi).
\end{equation}

\noindent {\it Step 1:} 
 Assume that for some $\delta\in (0, 1)$ we have 
\begin{gather}
    E(0)\leq 2\pi, \notag \\
    \|\phi_t\|_{L^{\infty}_x(-\pi, \pi; L^2_t(0, 3\pi))}^2\leq \delta E(0). \label{eq:ass:delta}
\end{gather}
This assumption implies that 
\begin{gather*}
 \int_{\R}\int_{\S} |\phi_t|^2(t, x) \psi(t)dx dt\lesssim   \delta E(0), \\
    \int_0^{3\pi}\int_{\mathbb{S}^1}a(x)|\phi_t|^2(t, x) dxdt\lesssim \int_0^{3\pi}\int_{\mathbb{S}^1}|\phi_t|^2(t, x) dxdt\lesssim \delta E(0),
\end{gather*}
thus
\begin{equation}
    E(0)- E(t)\lesssim \delta E(0), 
    \;  \forall t\in [0, 3\pi]. \notag
\end{equation}

By  definition of the energy
\begin{equation*}
    E(t)= \int_{\S} (|\phi_t|^2+ |\phi_x|^2)(t, x) \,dx,
\end{equation*}
we know that 
\begin{align*}
     \left|\int_{\R}\int_{\S} (|\phi_t|^2+ |\phi_x|^2)(t, x) \psi(t)dx dt- E(0)\right|= \left|\int_{\R} \psi(t)\big(E(t)- E(0)\big) dt\right|\lesssim   \delta E(0).
\end{align*}
Hence
\begin{align*}
     \left|\int_{\R}\int_{\S}  |\phi_x|^2(t, x) \psi(t)dx dt- E(0)\right|\lesssim   \delta E(0),
\end{align*}
which further implies the existence of some $\bar x\in \S$ such that 
\begin{align}
     \left|\int_{\R}  |\phi_x|^2(t, \bar x) \psi(t) dt- \frac{E(0)}{2\pi}\right|\lesssim   \delta E(0).
\end{align}

\noindent {\it Step 2:} In this step we show that the preceding inequality  holds for every $x\in \S$.  Write the wave maps equation as follows
\begin{equation*}
 \phi_{xx} + |\phi_x|^2\phi = \phi_{tt} + |\phi_t|^2\phi + a(x)\phi_t.
\end{equation*}
 Multiply the preceding wave maps equation  by $\phi_x$ and integrate against $\psi(t)$ in $(t, x)\in \R\times (\bar x, x_1)$. It follows that
\begin{align*}
 \int_{\bar x}^{x_1}\int_{\R}\phi_{xx}\cdot\phi_x\psi(t)\,dt dx
&= \int_{\bar x}^{x_1}\int_{\R}(\phi_{tt}\cdot\phi_x\psi(t) + a(x)\phi_t\cdot\phi_x\psi(t))\,dt dx\\
& =  \int_{\bar x}^{x_1}\int_{\R}(-\phi_{t}\cdot\phi_x\psi'(t) - \phi_t\cdot\phi_{xt}\psi(t) + a(x)\phi_t\cdot\phi_x\psi(t))\,dt dx.
\end{align*}
Because 
\begin{gather*}
    2\int_{\bar x}^{x_1} \phi_{xx}\cdot\phi_x(t, x) \, dx= |\phi_x|^2(t, x_1)- |\phi_x|^2(t, \bar x), \\
     2\int_{\bar x}^{x_1} \phi_{xt}\cdot\phi_t(t, x) \, dx= |\phi_t|^2(t, x_1)- |\phi_t|^2(t, \bar x),
\end{gather*}
we have 
\begin{align*}
\int_{\R} |\phi_x|^2(t,x_1)\psi(t)\, dt- \int_{\R} |\phi_x|^2(t,\bar x)\psi(t)\, dt
& =  2\int_{\bar x}^{x_1}\int_{\R}(-\phi_{t}\cdot\phi_x\psi'(t)  + a(x)\phi_t\cdot\phi_x\psi(t))\,dt dx \\
& \;\;\;\;\;  + \int_{\R} |\phi_t|^2(t, \bar x)\psi(t)\,dt - \int_{\R} |\phi_t|^2(t,x_1)\psi(t)\,dt.
\end{align*}
Keeping in mind that 
\begin{equation*}
    \int_{\S}\int_{0}^{3\pi}|\phi_{t}\cdot\phi_x|(t, x)\,dt dx\lesssim \sqrt{\delta} E(0),
\end{equation*}
we get
\begin{align}
     \left|\int_{\R}  |\phi_x|^2(t, x_1) \psi(t) dt- \frac{E(0)}{2\pi}\right|\lesssim   \sqrt{\delta} E(0), \; \forall x_1\in \S.
\end{align}

\noindent {\it Step 3:} Now we consider the following function $\tilde{\phi}(x)$ as well as the equation satisfied by it:
\begin{align*}
\tilde{\phi}(x): = \int_{\R} \phi(t, x)\psi(t)\,dt. 
\end{align*}
Note that $|\phi(t,x)| = 1$ and $\|\phi_t\|_{L_x^\infty L_t^2} = O(\sqrt{\delta E(0)})$, we have $|\tilde{\phi}| = 1+O(\sqrt{\delta E(0)})$. Indeed, for any $t, \bar t\in [0, 3\pi]$ and any $x\in \S$,
\begin{equation*}
    |\phi(t, x)- \phi(\bar t, x)|= \left|\int_{\bar t}^{t}\phi_t(s, x)\, ds\right|\lesssim \sqrt{\delta E(0)}.
\end{equation*}
Thus
\begin{equation*}
    |\tilde{\phi}(x)- \phi(\bar t, x)|\lesssim \sqrt{\delta E(0)}, \; \forall \bar t\in [0, 3\pi].
\end{equation*}
In particular, there exists  some $\bar \delta\in (0, 1)$ such that for any $\delta\in (0, \bar \delta]$ and for any $E(0)\in (0, 2\pi]$ satisfying the assumption \eqref{eq:ass:delta}, there is 
\begin{equation}\label{eq:tilphirange}
    |\tilde \phi(0)|\in \left(\frac{1}{2}, \frac{3}{2}\right).
\end{equation}

By the definition of $\tilde{\phi}$ and the wave maps equation,
\begin{align*}
    \tilde{\phi}_{xx}(x)&=  \int_{\R} \phi_{xx}(t, x)\psi(t)\,dt \\
    &= \int_{\R} \left(-|\phi_x|^2\phi+ \phi_{tt} + |\phi_t|^2\phi + a(x)\phi_t\right)(t, x) \psi(t)\,dt \\
    &= \int_{\R} -|\phi_x|^2\phi\psi(t)- \phi_{t}\psi_t(t) + (|\phi_t|^2\phi + a(x)\phi_t) \psi(t)\,dt.
\end{align*}
Successively, we are able to derive that for  $\forall x\in \S$,
\begin{gather*}
    \left|\int_{\R} - \phi_{t}\psi_t(t) + (|\phi_t|^2\phi + a(x)\phi_t) \psi(t)\,dt\right|\lesssim \sqrt{\delta E(0)}, \\
    \left|\int_{\R} (|\phi_x|^2\phi)(t, x)\psi(t)\,dt- \tilde{\phi}(x)\int_{\R}|\phi_x|^2(t, x)\psi(t)\,dt\right|\lesssim  \sqrt{\delta E(0)} E(0),
\end{gather*}
and
\begin{align*}
 \left|\tilde{\phi}(x)\int_{\R}|\phi_x|^2(t, x)\psi(t)\,dt- \frac{E(0)}{2\pi} \tilde{\phi}(x)\right| \lesssim  \sqrt{\delta E(0)}. 
\end{align*}
Consequently
\begin{equation}
    \left|\tilde{\phi}_{xx}(x)+ \frac{E(0)}{2\pi} \tilde{\phi}(x)\right|\lesssim  \sqrt{\delta E(0)}, \; \forall x\in \S.
\end{equation}

\noindent {\it Step 4:}  Finally, in this step we show that for $\delta$ sufficiently small we must have  $E(0)=0$ which leads to a contradiction.
Let us denote by $c_E$ the value of $E(0)/ 2\pi$. Then
\begin{equation*}
    |f(x)|:= \left|\tilde{\phi}_{xx}(x)+ c_E \tilde{\phi}(x)\right|\lesssim  \sqrt{\delta E(0)}, \; \forall x\in \S.
\end{equation*}
By decomposing 
\begin{gather*}
    f(x)= \sum_{n\in \mathbb{Z}} f_n e^{i nx}  \; \textrm{ and } \; 
     \tilde{\phi}(x)= \sum_{n\in \mathbb{Z}} a_n e^{i nx},
\end{gather*}
we get 
\begin{equation*}
    a_n (c_E- n^2)= f_n, \; \forall n\in \mathbb{Z}.
\end{equation*}
 Remark that by working with wave maps   the notations $a_n$ and $f_n$ are referring to $k+1$-dimensional vectors.
We immediately notice from the definition of $f_n$ that
\begin{equation*}
    |f_n|\lesssim  \sqrt{\delta E(0)}, \; \forall n\in \mathbb{Z},
\end{equation*}
which further implies
\begin{gather*}
    |a_n|= \frac{f_n}{c_E- n^2}\lesssim  \frac{1}{n^2} \sqrt{\delta E(0)}, \; \forall n\in \mathbb{Z}\setminus\{0, \pm 1\}, \\
    |a_0|= \left|\frac{f_0}{c_E}\right|\lesssim \sqrt{\frac{\delta}{E(0)}} \; \textrm{ and } \;
    |a_{\pm 1}|= \left|\frac{2\pi f_{\pm 1}}{E(0)- 2\pi}\right| \lesssim \frac{\sqrt{\delta E(0)}}{2\pi- E(0)}.
\end{gather*}

Let us assume that for some $\mu\in (0, \pi)$ the initial energy further verifies
\begin{equation}
    E(0)\in [\mu, 2\pi- \sqrt{\mu}].
\end{equation}
Under this assumption there is
\begin{equation*}
    |\tilde \phi(0)|\leq \sum_{n\in \mathbb{Z}}|a_n|\leq C \left(\sqrt{\delta E(0)}+ \sqrt{\frac{\delta}{E(0)}}+ \sqrt{\frac{\delta}{\mu}}\right).
\end{equation*}
Define $c_b$ in such fashion that $2\pi c_b<\bar \delta$ and that
\begin{equation*}
    C \left(\sqrt{2c_b \pi^2}+ 2\sqrt{c_b}\right)\leq \frac{1}{2}.
\end{equation*}
Then, by choosing $\delta= c_b \mu\in (0, \bar \delta)$ we know that 
\begin{equation*}
    |\tilde \phi(0)|\leq \frac{1}{2},
\end{equation*}
which is in contradiction with \eqref{eq:tilphirange}. Thus we conclude the proof of Proposition~\ref{prop:1w}.

\subsection{Proof of Proposition~\ref{prop:2}}
This section is devoted to the proof of Proposition~\ref{prop:2}. The strategy  is to  first find some point $x_0$ such that $\|\phi_t(t, x_0)\|_{L^2_t(-T, T)}$ is sufficiently small, and then to propagate such smallness to $x\in (-\pi, \pi)$ (or $x\in (0, 2\pi)$). More precisely, in Section~\ref{subsec:choicex0} we prove Lemma~\ref{lemma:choose} concerning the choice of $x_0$;  Section~\ref{subsec:propsmall} is devoted to the proof of Lemma~\ref{lem:keypropa} on  quantitative characterization of the propagation of smallness; finally, thanks to these two lemmas, in Section~\ref{subsec:propro2} we conclude the proof of  Proposition~\ref{prop:2}.\\

\noindent {\bf Cutoff functions}

At first  we introduce a series of cutoff functions which will be used later on especially when dealing with $H^{-1}_t$-norms. Select a even, smooth, non-negative,  truncated function $\mu(t)$ such that 
\begin{equation*}
    \mu(t)=1 \textrm{ on } [0, 1/2], \mu(t)=0 \textrm{ on } [1, +\infty].
\end{equation*}
For any $\alpha<\beta$ and for any $\tau\in (0, 1)$ we define the following cutoff function $\eta_{\alpha}^{\beta}[\tau]$ as
\begin{equation}
    \eta_{\alpha}^{\beta}[\tau](t):= 
    \begin{cases}
    \mu(\frac{t-\beta}{\tau}), \;\; \forall t\in (\beta, +\infty), \\
    1, \;\; \forall t\in [\alpha, \beta], \\
    \mu(\frac{t-\alpha}{\tau}), \;\;  \forall t\in (-\infty, \alpha), 
    \end{cases}
\end{equation}
that is supported in $(\alpha- \tau, \beta +  \tau)$.

\subsubsection{\bf On the choice of $x_0$.}\label{subsec:choicex0}

We are interested in both $\|\phi_t(\cdot, x)\|_{L^2_t}$ and $\|\phi_{tx}(\cdot, x)\|_{H^{-1}_t}$. To avoid possible problems on the definition of $H^{-1}_t$ norms, we work on the truncation functions, $\eta_{\alpha}^{\beta}[\tau] \phi_{tx}$, 
and  estimate 
\begin{equation*}
     \|\langle \partial_t\rangle^{-1} \big(\eta_{\alpha}^{\beta}[\tau](t) \phi_{tx}(t, x)\big)\|_{L^2_t(\mathbb{R})}.
\end{equation*}

\begin{lemma}\label{lemma:choose}
There exists some effectively computable $C_0>0$ such that, for any  $\varepsilon\in (0, 1)$, for any $\tau\in (0, 1)$,
if there is some solution  $\phi$  of the damped wave maps equation satisfying 
\begin{gather}
    E(0)\leq 2\pi, \notag \\
    \int_{-16\pi}^{16\pi}\int_{\mathbb{S}^1}a(x)|\phi_t|^2(t, x) dxdt\leq \varepsilon  E(0), \label{es:mainasum} 
\end{gather}
then,  there exists some $x_0\in [0, 2\pi)$ such that
\begin{equation}\label{es:lem_31}
     \|\phi_t(t, x_0)\|_{L^2_t(-16\pi, 16\pi)}^2+   \|\langle\partial_t\rangle^{-1}\left(\eta_{-15\pi}^{15\pi}[\tau] \phi_{tx}\right)(t, x_0)\|_{L^2_t(\R)}^2 \leq C_0 \frac{\sqrt{\varepsilon}}{\tau^2} E(0).
\end{equation}
\end{lemma}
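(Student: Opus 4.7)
The plan is to locate a single point $x_0\in[0,2\pi)$ at which both quantities in \eqref{es:lem_31} are small, by combining an averaging argument on the damping hypothesis with a standard multiplier identity and an elementary commutator estimate. From $a(x)\ge 1$ on $\omega_0$ and \eqref{es:mainasum}, Fubini gives
\[
\int_{\omega_0}\|\phi_t(\cdot,x)\|_{L^2_t(-16\pi,16\pi)}^2\,dx\;\le\;\varepsilon\,E(0),
\]
which controls the first term of \eqref{es:lem_31} in an averaged sense on $\omega_0$.

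The $H^{-1}_t$-quantity is reduced to an $L^2_t$-norm of $\phi_x$ via the identity $\eta_{-15\pi}^{15\pi}[\tau]\phi_{tx}=\partial_t(\eta_{-15\pi}^{15\pi}[\tau]\phi_x)-(\partial_t\eta_{-15\pi}^{15\pi}[\tau])\phi_x$. Using the $L^2_t$-boundedness of $\partial_t\langle\partial_t\rangle^{-1}$ and $\langle\partial_t\rangle^{-1}$, together with $\|\partial_t\eta_{-15\pi}^{15\pi}[\tau]\|_{L^\infty}\lesssim\tau^{-1}$, one obtains pointwise in $x$
\[
\|\langle\partial_t\rangle^{-1}(\eta_{-15\pi}^{15\pi}[\tau]\phi_{tx})(\cdot,x)\|_{L^2_t(\R)}^2\;\lesssim\;\tau^{-2}\,\|\phi_x(\cdot,x)\|_{L^2_t(-15\pi-\tau,15\pi+\tau)}^2,
\]
reducing the task to showing that $\|\phi_x(\cdot,x)\|_{L^2_t}^2\lesssim\sqrt\varepsilon\,E(0)$ on average over a fixed-length sub-interval $I\subset\omega_0$.

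This $\phi_x$-control is obtained by a classical multiplier identity. Choose $\psi\ge 0$ smooth with $\psi\equiv 1$ on $[-15\pi-\tau,15\pi+\tau]$ and $\supp\psi\subset(-16\pi,16\pi)$, with $\|\psi'\|_{L^\infty}$ bounded independently of $\tau$ (possible since the transition regions have length $\gtrsim 1$), and $q\ge 0$ smooth with $\supp q\subset\omega_0$ and $q'\ge c_I>0$ on a fixed sub-interval $I\subset\omega_0$. Multiplying the damped wave maps equation by $q(x)\psi(t)\phi_x$ and integrating by parts in $(t,x)$, the geometric nonlinearity drops out by $\phi\cdot\phi_x=0$ and one obtains
\[
\tfrac{1}{2}\iint q'\psi\,(|\phi_t|^2+|\phi_x|^2)\,dt\,dx\;=\;\iint q\psi'\,\phi_t\cdot\phi_x\,dt\,dx-\iint q\psi\,a\,\phi_t\cdot\phi_x\,dt\,dx.
\]
Both right-hand integrals live on $\omega_0\times(-16\pi,16\pi)$, where \eqref{es:mainasum} combined with $a\ge 1$ on $\omega_0$ gives $\|\phi_t\|_{L^2}\le\sqrt{\varepsilon E(0)}$, while the energy estimate \eqref{es:e3} gives $\|\phi_x\|_{L^2}\lesssim\sqrt{E(0)}$. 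Cauchy--Schwarz bounds each integral by $C\sqrt\varepsilon\,E(0)$, and $q'\ge c_I$ on $I$ together with $\psi\ge 1$ on $\supp\eta_{-15\pi}^{15\pi}[\tau]$ yields $\int_I\|\phi_x(\cdot,x)\|_{L^2_t(-15\pi-\tau,15\pi+\tau)}^2\,dx\lesssim\sqrt\varepsilon\,E(0)$.

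Combining the three bounds,
\[
\int_I\!\Bigl(\|\phi_t(\cdot,x)\|_{L^2_t(-16\pi,16\pi)}^2+\|\langle\partial_t\rangle^{-1}(\eta_{-15\pi}^{15\pi}[\tau]\phi_{tx})(\cdot,x)\|_{L^2_t(\R)}^2\Bigr)dx\;\lesssim\;\frac{\sqrt\varepsilon}{\tau^2}\,E(0),
\]
and averaging over the fixed-length interval $I$ produces the desired $x_0$. The delicate point, and where the argument must be tuned, is the choice of cutoffs: $\psi$ must equal one on $\supp\eta_{-15\pi}^{15\pi}[\tau]$ while remaining supported inside the time window $[-16\pi,16\pi]$ where the damping is controlled, and $\|\psi'\|_{L^\infty}$ must be independent of $\tau$, so that no additional $\tau$-loss is picked up from the multiplier identity and the $\tau^{-2}$ in the final bound arises solely from the commutator $[\langle\partial_t\rangle^{-1},\eta_{-15\pi}^{15\pi}[\tau]]$.
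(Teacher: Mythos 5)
Your reduction of the $H^{-1}_t$-quantity to the $L^2_t$-norm of $\phi_x$, followed by the multiplier estimate, does not close, because the intermediate claim
\[
\int_I\|\phi_x(\cdot,x)\|_{L^2_t(-16\pi,16\pi)}^2\,dx\;\lesssim\;\sqrt{\varepsilon}\,E(0)
\]
on a fixed subinterval $I\subset\omega_0$ is false. The damping hypothesis \eqref{es:mainasum} controls $\phi_t$ on $\omega_0$ but says nothing about $\phi_x$ there: near the harmonic map $\mathcal{Q}$ one has $\phi_t$ arbitrarily small (so \eqref{es:mainasum} holds with tiny $\varepsilon$) while $|\phi_x|\sim 1$ on all of $\mathbb{S}^1$, so the right-hand side goes to zero while the left-hand side stays of order $E(0)$. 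The pointwise bound $\|\langle\partial_t\rangle^{-1}(\eta_{-15\pi}^{15\pi}[\tau]\phi_{tx})(\cdot,x)\|_{L^2_t}\lesssim\tau^{-1}\|\phi_x(\cdot,x)\|_{L^2_t}$ is correct but too lossy: the $H^{-1}_t$-norm of $\partial_t\phi_x$ can be much smaller than the $L^2_t$-norm of $\phi_x$, and the conclusion \eqref{es:lem_31} is only true because that structure is retained, not discarded. Your multiplier step has an additional, independent problem: on $\mathbb{S}^1$ a smooth $q\ge 0$ with $\supp q\subset\omega_0$ necessarily has $\int_{\mathbb{S}^1}q'\,dx=0$, so $q'$ changes sign inside $\omega_0$. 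The identity you derived,
\[
\tfrac12\iint q'\psi\,(|\phi_t|^2+|\phi_x|^2)\,dt\,dx \;=\; \iint q\psi'\,\phi_t\cdot\phi_x\,dt\,dx-\iint aq\psi\,\phi_t\cdot\phi_x\,dt\,dx,
\]
is correct, but the left-hand side is not sign-definite, and on the region where $q'<0$ the only available control on $|\phi_x|^2$ is $O(E(0))$, which swamps the $O(\sqrt{\varepsilon}E(0))$ contribution from the right-hand side. So even the weaker claim cannot be extracted from this identity.

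The paper's argument is structurally different and this difference is the crux. It never abandons the $\langle\partial_t\rangle^{-1}$ operator: it studies the integral $\int a(x)\,|\langle\partial_t\rangle^{-1}(\eta\phi_{tx})|^2\,dt\,dx$ and integrates by parts in both $t$ and $x$, producing four terms in each of which one factor involves $\phi_t$, $\phi_{tt}$, or $\eta_t\phi_t$ weighted by $\sqrt{a}$ --- this is where the $\sqrt{\varepsilon}$ is harvested from \eqref{es:mainasum} --- while the other factor becomes $\langle\partial_t\rangle^{-1}(\eta\phi_{xx})$ or $\langle\partial_t\rangle^{-1}(\eta_t\phi_{xx})$, bounded by $\tau^{-1}\sqrt{E(0)}$ and $\tau^{-2}\sqrt{E(0)}$ respectively after substituting the wave maps equation for $\phi_{xx}$; a boundary term uses $a_x/\sqrt{a}\lesssim 1$. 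That cross-structure --- one small factor, one bounded factor, within a weighted inner product --- is precisely what your passage to $\|\phi_x\|_{L^2_t}^2$ destroys.
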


\begin{proof}[Proof of Lemma~\ref{lemma:choose}]

For ease of notation, in the remainder of this proof, we simply denote the function  $\eta_{-15\pi}^{15\pi}[\tau](t, x)$ by $\eta(t, x)$ or simply $\eta(t)$.
The assumption  \eqref{es:mainasum} is equivalent to
\begin{equation}\label{es:sm1}  
\int_{\mathbb{S}^1} a(x) \int_{-16\pi}^{16\pi} |\phi_t|^2(t, x) dt dx\leq \varepsilon E(0).
\end{equation}
Thus it suffices to conclude the smallness of 
\begin{equation*}  
\int_{\mathbb{S}^1} a(x) \int_{\mathbb{R}} \left(\langle \partial_t\rangle^{-1} \left(\eta(t) \phi_{tx}(t, x)\right)\right)^2 dt dx.
\end{equation*}
A priori we do not have smallness of the preceding candidate: therefore, more careful estimates are required to achieve it. \\

Observe that
\begin{equation*}
    \langle \partial_t\rangle^{-1} \left(\eta(t) \psi_{t}(t)\right)=  \langle \partial_t\rangle^{-1} \left(\partial_t(\eta \psi)- \eta_t \psi \right),
\end{equation*}
which, combined with the fact that $\eta(t)= \eta_{-15\pi}^{15\pi}[\tau](t)$, implies   
\begin{equation*}
   \|\langle \partial_t\rangle^{-1} \left(\eta(t) \psi_{t}(t)\right)\|_{L^2_t(\R)} \lesssim  \frac{1}{\tau}\|\psi\|_{L^2_t(-16\pi, 16\pi)}. 
\end{equation*}
Hence,
\begin{gather*}
   \|\langle \partial_t\rangle^{-1} \left(\eta(t) \phi_{tx}(t, x)\right)\|_{L^2_{t, x}(\R\times \S)}^2 \lesssim \frac{1}{\tau^2} \|\phi_x\|_{L^2_{t, x}(D_0)}^2\lesssim \frac{1}{\tau^2}E(0), \\
    \|\langle \partial_t\rangle^{-2} \left(\eta(t) \phi_{tt}(t, x)\right)\|_{L^2_t(\R)}\leq \|\langle \partial_t\rangle^{-1} \left(\eta(t) \phi_{tt}(t, x)\right)\|_{L^2_t(\R)} \lesssim \frac{1}{\tau} \|\phi_t\|_{L^2_t(-16\pi, 16\pi)}. 
\end{gather*}

In order to benefit from the smallness of \eqref{es:mainasum} we adapt integration by parts to consider 
\begin{align*}
&\;\;\;\; \int_{\mathbb{S}^1}  \int_{\mathbb{R}} a(x) \left(\langle \partial_t\rangle^{-1} \left(\eta(t) \phi_{tx}(t, x)\right)\right)\cdot  \left(\langle \partial_t\rangle^{-1} \left(\eta(t) \phi_{tx}(t, x)\right)\right) dt dx \\
&=  \int_{\mathbb{S}^1}  \int_{\mathbb{R}} a(x) \left(\langle \partial_t\rangle^{-1} \left(\eta(t) \phi_{tt}\right)\right)\cdot  \left(\langle \partial_t\rangle^{-1} \left(\eta(t) \phi_{xx}\right)\right) dt dx \\
&\;\;\;+ \int_{\mathbb{S}^1}  \int_{\mathbb{R}} a(x) \left(\langle \partial_t\rangle^{-1} \left(\eta_t(t) \phi_{t}\right)\right)\cdot  \left(\langle \partial_t\rangle^{-1} \left(\eta(t) \phi_{xx}\right)\right) dt dx \\
&\;\;\;+ \int_{\mathbb{S}^1}  \int_{\mathbb{R}} a(x) \left(\langle \partial_t\rangle^{-1} \left(\eta(t) \phi_{t}\right)\right)\cdot  \left(\langle \partial_t\rangle^{-1} \left(\eta_t(t) \phi_{xx}\right)\right) dt dx \\
&\;\;\;- \int_{\mathbb{S}^1}  \int_{\mathbb{R}} a_x(x) \left(\langle \partial_t\rangle^{-1} \left(\eta_t(t) \phi_{t}\right)\right)\cdot  \left(\langle \partial_t\rangle^{-1} \left(\eta(t) \phi_{tx}\right)\right) dt dx \\
&=: I+ II+ III+ IV.
\end{align*}
In the following we shall treat the preceding terms one by one. Firstly, noticing the formula
\begin{equation*}
    \frac{a_x(x)}{\sqrt{a(x)}}= 2\partial_x \sqrt{a(x)}\lesssim 1,
\end{equation*}
we immediately get the smallness of $IV$:
\begin{align*}
    IV&\leq \left\|\sqrt{a(x)} \langle \partial_t\rangle^{-1} \left(\eta_t(t) \phi_{t}\right)\right\|_{L^2_x(\S; L^2_t(\R))} \left\|\frac{a_x(x)}{\sqrt{a(x)}} \langle \partial_t\rangle^{-1} \left(\eta(t) \phi_{tx}\right)\right\|_{L^2_x(\S; L^2_t(\R))}\\
    &\lesssim \frac{1}{\tau} \left\|\sqrt{a(x)}  \left(\eta_t(t) \phi_{t}\right)\right\|_{L^2_x(\S; L^2_t(\R))}  \left\| \phi_{x}\right\|_{L^2_x(\S; L^2_t(-16\pi, 16\pi))}\\
    &\lesssim \frac{\sqrt{\varepsilon}}{\tau^2} E(0).
\end{align*}

As for the other components, $I-III$, the difficult part is on the estimates of  right hand side items:  $\langle \partial_t\rangle^{-1} \left(\eta(t) \phi_{xx}\right)$ and $\langle \partial_t\rangle^{-1} \left(\eta_t(t) \phi_{xx}\right)$. Keeping in mind that 
\begin{align*}
    \left\|\sqrt{a} \langle \partial_t\rangle^{-1} \left(\eta \phi_{tt}\right)\right\|_{L^2_x(\S; L^2_t(\R))}&\lesssim \frac{\sqrt{\varepsilon}}{\tau} \sqrt{E(0)} \\
     \left\|\sqrt{a} \langle \partial_t\rangle^{-1} \left(\eta_t \phi_{t}\right)\right\|_{L^2_x(\S; L^2_t(\R))}&\lesssim \frac{\sqrt{\varepsilon}}{\tau} \sqrt{E(0)}\\
    \left\|\sqrt{a} \langle \partial_t\rangle^{-1} \left(\eta \phi_{t}\right)\right\|_{L^2_x(\S; L^2_t(\R))}&\lesssim \sqrt{\varepsilon} \sqrt{E(0)},
\end{align*}
it suffices to arrive at 
\begin{align}
    \left\|\langle \partial_t\rangle^{-1} \left(\eta(t) \phi_{xx}\right)\right\|_{L^2_x(\S; L^2_t(\R))}&\lesssim \frac{1}{\tau}\sqrt{E(0)}, \label{es:pepxx} \\
     \left\|\langle \partial_t\rangle^{-1} \left(\eta_t(t) \phi_{xx}\right)\right\|_{L^2_x(\S; L^2_t(\R))}&\lesssim \frac{1}{\tau^2}\sqrt{E(0)}. \label{es:petpxx} 
\end{align}

For this purpose we replace $\phi_{xx}$ by the other terms from the wave maps equation to get 
\begin{align*}
   \langle \partial_t\rangle^{-1} \left(\eta(t) \phi_{xx}\right)&= \langle \partial_t\rangle^{-1} \Big(\eta(t) \big( \phi_{tt}+ (|\phi_t|^2- |\phi_x|^2)\phi+ a(x) \phi_t\big)\Big) \\
   &= \langle \partial_t\rangle^{-1} \big(\eta(t) \phi_{tt}\big)+  \langle \partial_t\rangle^{-1} \Big(\eta(t) \left(|\phi_t|^2- |\phi_x|^2\right)\phi\Big)+ a(x) \langle \partial_t\rangle^{-1}\big(\eta(t) \phi_t\big).
\end{align*}
The $L^2_{t, x}$-norms of the first and the last candidates are controlled by 
\begin{gather*}
     \left\|\langle \partial_t\rangle^{-1} \left(\eta(t) \phi_{tt}\right)\right\|_{L^2_x(\S; L^2_t(\R))}\lesssim \frac{1}{\tau} \left\| \phi_{t}\right\|_{L^2_x(\S; L^2_t(-16\pi, 16\pi))}\lesssim \frac{1}{\tau} \sqrt{E(0)},\\
      \left\|a(x) \langle \partial_t\rangle^{-1}\big(\eta(t) \phi_t\big)\right\|_{L^2_x(\S; L^2_t(\R))}\lesssim \left\| \phi_{t}\right\|_{L^2_x(\S; L^2_t(-16\pi, 16\pi))}\lesssim  \sqrt{E(0)}.
\end{gather*}

Concerning the middle term, by ignoring the $\langle \partial_t\rangle^{-1}$ operator for the  $L^2_t$-estimate, it suffices to conclude
\begin{equation*}
     \left\|  \eta(t) \left(|\phi_t|^2- |\phi_x|^2\right)\phi \right\|_{L^2_x(\S; L^2_t(\R))}\lesssim \frac{1}{\tau} \sqrt{E(0)},
\end{equation*}
or, equivalently, 
\begin{equation*}
     \left\|  \eta(t) \left(|\phi_t|^2- |\phi_x|^2\right) \right\|_{L^2_x(\S; L^2_t(\R))}\lesssim \frac{1}{\tau}\sqrt{E(0)}.
\end{equation*}
This is the consequence of  
\begin{align*}
   \left\|  \eta(t) \left(|\phi_t|^2- |\phi_x|^2\right)(t, x) \right\|_{L^2_x(\S; L^2_t(\R))}
     &\leq  \left\|  \left(|\phi_t|^2- |\phi_x|^2\right)(t, x) \right\|_{L^2_x(-\pi, \pi; L^2_t(-16\pi,16\pi))} \\
     &= 4\left\|  \left(\phi_u\cdot \phi_v \right)(t, x) \right\|_{L^2_x(-\pi, \pi; L^2_t(-16\pi,16\pi))} \\
     & \leq 4\left\|  \left(\phi_u\cdot \phi_v \right)(t, x) \right\|_{L^2_{t, x}(D)} \\
     &= 2 \left\|  \left(\phi_u\cdot \phi_v \right)(u, v) \right\|_{L^2_{u, v}(D)} \lesssim E(0)\lesssim \sqrt{E(0)},
\end{align*}
Thus it finishes  the proof of Inequality \eqref{es:pepxx} on  $\langle \partial_t\rangle^{-1} \left(\eta(t) \phi_{xx}\right)$. Similarly, we get the estimate \eqref{es:petpxx}  concerning $\langle \partial_t\rangle^{-1} \left(\eta_t(t) \phi_{xx}\right)$.\\

In conclusion we have obtained that 
\begin{equation}\label{es:sm2}  
\int_{\mathbb{S}^1} a(x) \int_{\mathbb{R}} \Big(\langle \partial_t\rangle^{-1} \left(\eta_{-15\pi}^{15\pi}[\tau](t) \phi_{tx}(t, x)\right)\Big)^2 dt dx\leq C\frac{\sqrt{\varepsilon}}{\tau^2} E(0),
\end{equation}
where the constant $C$ does not depend on $\varepsilon\in (0, 1)$, $\tau\in (0, 1)$ and $E(0)\in [0, 2\pi]$.\\
By combining inequalities \eqref{es:sm1} and \eqref{es:sm2} we can  find some $x_0$ belonging to supp $a$ (or even $[-l_0/2, l_0/2]$) such that 
\begin{equation}
   \int_{-16\pi}^{16\pi} |\phi_t|^2(t, x_0) dt+ \int_{\mathbb{R}} \left(\langle \partial_t\rangle^{-1} \left(\eta_{-15\pi}^{15\pi}[\tau](t) \phi_{tx}(t, x_0)\right)\right)^2 dt\leq C_0 \frac{\sqrt{\varepsilon}}{\tau^2}E(0),
\end{equation}
where the value of $C_0$ is independent of the choice of  $\varepsilon\in (0, 1)$, $\tau\in (0, 1)$, and initial states verifying $E(0)\in [0, 2\pi]$.
\end{proof}

\subsubsection{\bf Propagation of the smallness.}\label{subsec:propsmall}

Now we try to propagate the preceding smallness to arrive at some global bound:
\begin{equation*}
    \|\phi_t\|_{L_x^{\infty}(\S; L^2_t(0, 2\pi))}^2\lesssim \tilde \varepsilon E(0).
\end{equation*}

First we show that this is possible  in a narrow vertical strip with width $|S|$ using bootstrap arguments, then we iterate this procedure to obtain global bounds. Actually, due to the finite speed of propagation instead of vertical strips we shall work with some vertical trapezoidal regions: for $\alpha+ 2\pi<\beta$ and $l\leq \pi$ we  define
\begin{gather}
    P_{\alpha, \beta}^l(y):= \{(t, x): x\in [y, y+l], t\in [\alpha+ x-y, \beta-x+y]\}, 
\end{gather}
and  the $L^{\infty}_x L^2_t$-norm on it as
\begin{gather}
    \|\psi\|_{L^{\infty}_x L^2_t(P_{\alpha, \beta}^l(y))}:= \sup_{x\in [y, y+l]}   \|\psi(t, x)\|_{L^2_t(\alpha+ x-y, \beta-x+y)}.
\end{gather}

The rest part of this subsection is devoted to the proof of the following key lemma.

\begin{lemma}\label{lem:keypropa}
There exist some effectively computable values $S_0$ 
and $C_{S_0}$ such that, for any $\tau\in (0, 1)$,  for any $\alpha\in [-15\pi, 0), \beta\in (2\pi, 15\pi]$, for any $z\in [0, 4\pi]$, and for any initial state of the damped wave maps equation  \eqref{equation:dwm} satisfying 
\begin{gather*}
    E(0)\leq 2\pi,
\end{gather*}
we have
\begin{equation}\label{eq:bound:p181}
    \|\phi_t\|_{L^{\infty}_x L^2_t(P_{\alpha, \beta}^{S_0}(z))}\leq 2  \|\phi_t(t, z)\|_{L^2_t(\alpha, \beta)}+ 6\|\langle\partial_t \rangle^{-1}\left(\eta_{\alpha}^{\beta}[\tau](t) \phi_{tx}(t, z)\right)\|_{L^2_t(\R)},
\end{equation}
and, moreover, by denoting $\tau_0:= S_0/16$, there exists some $\bar x\in (z+ S_0/2, z+ S_0)$ such that 
\begin{align}\label{eq:bound:p182}
   &\;\;\;\;\; \| \langle\partial_t\rangle^{-1} \left(\eta_{\alpha+ S_0+\tau_0}^{\beta-S_0-\tau_0}[\tau_0](t)\phi_{tx}(t, \bar x)\right)\|_{L^2_t(\R)} \notag \\
   &\leq C_{S_0} \left(E(0)\right)^{\frac{1}{4}}\left(  \|\phi_t(t, z)\|_{L^2_t(\alpha, \beta)}+ \|\langle\partial_t \rangle^{-1}\left(\eta_{\alpha}^{\beta}[\tau_0](t) \phi_{tx}(t, z)\right)\|_{L^2_t(\R)} \right)^{\frac{1}{2}}.
\end{align}
\end{lemma}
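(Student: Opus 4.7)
The argument naturally splits into proofs of \eqref{eq:bound:p181} and \eqref{eq:bound:p182}, both grounded in the null-coordinate analysis of Section~\ref{subsec:prel}.

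For \eqref{eq:bound:p181} the plan is to pass to null coordinates $u=t+x$, $v=t-x$ and propagate $\phi_t=\phi_u+\phi_v$ from the line $x=z$: integrating $-\phi_{uv}=F=(\phi_u\cdot\phi_v)\phi+a\phi_t/4$ along $u=$~const and $v=$~const, one obtains, with $h=x-z$,
\begin{equation*}
\phi_t(t,x)=\tfrac{1}{2}[\phi_t(t+h,z)+\phi_t(t-h,z)]+\tfrac{1}{2}[\phi_x(t+h,z)-\phi_x(t-h,z)]+2\int_0^h[F(t+s,x-s)-F(t-s,x-s)]\,ds.
\end{equation*}
The translated $\phi_t$-terms contribute, after a change of variable in time, at most $\|\phi_t(\cdot,z)\|_{L^2_t(\alpha,\beta)}$ each to the $L^2_t$-norm on the slice at $x$. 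The $\phi_x$-difference equals $\tfrac{1}{2}\int_{t-h}^{t+h}\eta_\alpha^\beta[\tau](s)\phi_{tx}(s,z)\,ds$ (inserting the cutoff is lawful since $\eta_\alpha^\beta[\tau]\equiv 1$ on the inner range), which is the convolution of $\eta_\alpha^\beta[\tau]\phi_{tx}(\cdot,z)$ with $\mathbf{1}_{[-h,h]}$; because $\sup_\tau\langle\tau\rangle|\sin(h\tau)/\tau|\lesssim 1$ uniformly for $h\in(0,1)$, Plancherel delivers the $L^2_t$-bound $\lesssim\|\langle\partial_t\rangle^{-1}(\eta_\alpha^\beta[\tau]\phi_{tx}(\cdot,z))\|_{L^2_t}$. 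Finally, by Minkowski in $s$ together with the pointwise-in-$x$ bounds of Section~\ref{sec-basices} (the quadratic part of $F$ via $\|\phi_u\cdot\phi_v\|_{L^2_{u,v}(D)}\lesssim E(0)$, the damping part via $\|\phi_t(\cdot,y)\|_{L^2_t}\lesssim E(0)^{1/2}$), the forcing integral contributes at most $\lesssim S_0\,E(0)^{1/2}$, which is absorbed by choosing $S_0$ small enough, producing the coefficients $2$ and $6$.

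For \eqref{eq:bound:p182} the plan is to pass to the Fourier variable in $t$. Writing $\Psi(\tau,x)=\widehat{\eta_2\phi_t}(\tau,x)$ with $\eta_2=\eta_{\alpha+S_0+\tau_0}^{\beta-S_0-\tau_0}[\tau_0]$, and noting that $\phi_t$ satisfies $\Box\phi_t=N_t$ (with $N=(|\phi_t|^2-|\phi_x|^2)\phi+a\phi_t$), one obtains after commuting the cutoff with $\partial_t$ a one-dimensional Helmholtz equation
\begin{equation*}
\partial_x^2\Psi(\tau,x)+\tau^2\Psi(\tau,x)=R(\tau,x),
\end{equation*}
with $R$ collecting $\widehat{\eta_2 N_t}$ and commutator terms in $\eta_2',\eta_2''$. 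The Duhamel formula
\begin{equation*}
\partial_x\Psi(\tau,x)=-\tau\sin(\tau(x-z))\Psi(\tau,z)+\cos(\tau(x-z))\partial_x\Psi(\tau,z)+\int_z^x\cos(\tau(x-y))R(\tau,y)\,dy,
\end{equation*}
weighted by $\langle\tau\rangle^{-2}$ and integrated in $\tau$, converts the first two terms into $\|\eta_2\phi_t(\cdot,z)\|_{L^2_t}+\|\eta_2\phi_{tx}(\cdot,z)\|_{H^{-1}_t}\lesssim A+B$, where $A,B$ denote the two summands on the right of \eqref{eq:bound:p181}. Integrating also in $x$ over the slab $(z+S_0/2,z+S_0)$ and using $\|\phi_t\|_{L^\infty_xL^2_t(P_{\alpha,\beta}^{S_0}(z))}\leq 2A+6B$ from \eqref{eq:bound:p181} together with $\|\phi_u\cdot\phi_v\|_{L^2_{u,v}(D)}\lesssim E(0)$ and \eqref{es:e2}, the Duhamel forcing contributes at most $C_{S_0}E(0)^{1/2}(A+B)$. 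A pigeonhole selection of $\bar x\in(z+S_0/2,z+S_0)$ then yields $\|\eta_2\phi_{tx}(\cdot,\bar x)\|_{H^{-1}_t}^2\leq C_{S_0}E(0)^{1/2}(A+B)$, which is precisely \eqref{eq:bound:p182} after taking square roots.

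The main obstacle is the genuinely quadratic part $(|\phi_t|^2-|\phi_x|^2)\phi=4(\phi_u\cdot\phi_v)\phi$ of $N$: it is \emph{not} made small by smallness of $\phi_t$ alone, since $\phi_u\cdot\phi_v$ still carries the $O(E(0))$ piece $-|\phi_x|^2/4$. The way out is to replace pointwise-in-$x$ control of the forcing by averaging over the slab and exploiting the bilinear structure $\|\phi_t\|_{L^\infty_x L^2_t(P_{\alpha,\beta}^{S_0}(z))}\cdot\|\phi_x\|_{L^2_{t,x}(P_{\alpha,\beta}^{S_0}(z))}\lesssim(A+B)E(0)^{1/2}$, in which the first factor is the smallness from \eqref{eq:bound:p181} while the second is the universal $O(E(0)^{1/2}S_0^{1/2})$ coming from \eqref{es:e2}. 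This geometric-mean structure is exactly what produces the $E(0)^{1/4}(A+B)^{1/2}$ scaling of \eqref{eq:bound:p182} rather than the naive $O(E(0)^{1/2})$.
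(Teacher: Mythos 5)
Your proof of \eqref{eq:bound:p181} contains a genuine gap. You derive a representation for $\phi_t(t,x)$ by differentiating the d'Alembert formula for $\phi$ (equivalently, propagating $\phi_u$ and $\phi_v$ along characteristics from $x=z$), which produces a Duhamel integral of the forcing $F=(\phi_u\cdot\phi_v)\phi+\tfrac14 a\phi_t$. The quadratic term $(\phi_u\cdot\phi_v)\phi=\tfrac14(|\phi_t|^2-|\phi_x|^2)\phi$ carries the $-|\phi_x|^2$ piece, which is \emph{not} proportional to $\phi_t$ and is $O(E(0))$ in $L^2_{t,x}$. After integrating across the strip of width $h\le S_0$ and applying Cauchy--Schwarz, this contributes an additive error of size $\sqrt{S_0}\,E(0)$ that is independent of the bootstrap quantity $\|\phi_t\|_{L^\infty_x L^2_t(P)}$. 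Choosing $S_0$ small makes the error small \emph{in absolute terms} but not small \emph{relative to} the right-hand side $2A+6B$ of \eqref{eq:bound:p181}, so you cannot conclude the stated inequality. An error of this type is fatal for the application in Proposition~\ref{prop:2}, since the $O(\sqrt{S_0}E(0))$ term would accumulate over the $N\sim 2\pi/S_0$ iterations and destroy the smallness that is being propagated.

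The paper's route avoids this by working with the wave equation for $\phi_t$ itself, $-\phi_{t,uv}=G$ with $G=(\phi_u\cdot\phi_v)\phi_t+(\phi_u\cdot\phi_{t,v}+\phi_v\cdot\phi_{t,u})\phi+\tfrac14 a\phi_{tt}$. After applying the d'Alembert/Duhamel representation to $\phi_t$ and integrating by parts in the double integral to transfer the derivatives off $\phi_{t,u},\phi_{t,v}$, \emph{every} resulting bulk and boundary term carries an explicit factor of $\phi_t$ (see the decomposition into $f_1,\dots,f_6$). That structural feature is exactly what produces a forcing bound of the form $\bigl(\sqrt d\,E(0)+d\sqrt{E(0)}\bigr)\|\phi_t\|_{L^\infty_xL^2_t(P)}$, so the bootstrap closes. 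In your own closing paragraph you correctly identify the obstruction (the $|\phi_x|^2$ part of the quadratic form is not controlled by smallness of $\phi_t$), but the remedy you propose --- averaging over the slab to extract a bilinear $\|\phi_t\|\cdot\|\phi_x\|$ structure --- is only available for \eqref{eq:bound:p182}, where a pigeonhole over $\bar x$ is permitted. Estimate \eqref{eq:bound:p181} must hold for every slice $x\in[z,z+S_0]$, so no such averaging is available there; the fix must come from the choice of equation, not the choice of norm.

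Your sketch of \eqref{eq:bound:p182} via the Helmholtz equation in $x$ for $\widehat{\eta_2\phi_t}$ is plausible in outline and shares the geometric-mean philosophy of the paper's argument (integration by parts in $x$ against the spatial cutoff $b_z$), but since it relies on \eqref{eq:bound:p181} as an input it inherits the gap above. You would also need to spell out how $\widehat{\eta_2 N_t}$ is handled, since $N_t$ contains $\phi_x\cdot\phi_{tx}$, i.e.\ the very quantity being estimated; the paper sidesteps this by a double integration by parts that converts $\phi_{tx}\cdot\phi_{tx}$ into $\phi_{tt}\cdot\phi_{xx}$ and then substitutes the wave maps equation for $\phi_{xx}$.
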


\begin{proof}[Proof of Lemma~\ref{lem:keypropa}] 
Its proof  is divided in two steps:  first we get a uniform $L^2$ bound for $\phi_t$; then, armed with this uniform $L^2$ bound, by considering the average we are able to find a specific $\bar x$  such that $\phi_{tx}(t, \bar x)$ is bounded in $H^{-1}_t$ space.\\

\noindent {\bf Step 1: On the characterization of $\phi_t$.}

In the following we assume that  $z= x_0\in [0, 4\pi)$. Let given $-15\pi\leq \alpha< 0<2\pi< \beta\leq 15\pi$. Let us consider the damped wave maps equation in the region $P:= P_{\alpha, \beta}^{S}(x_0)$ with the value of $S$ to be chosen later on.  Our goal is to  perform a bootstrap argument to get the desired bound \eqref{eq:bound:p181}.

Recall the equation of $\phi_t$ 
\begin{equation}
    -\phi_{t, uv}= (\phi_u\cdot \phi_v)\phi_t+ (\phi_u\cdot\phi_{t, v}+ \phi_v\cdot \phi_{t, u})\phi+ \frac{1}{4}a \phi_{tt}=: G(t, x), \notag
\end{equation}
 for any $(u, v)\in P$ there is
\begin{equation}
    -\phi_{t, u}(u, v)= -\phi_{t, u}(u, u-2x_0)+ \int_{u- 2x_0}^v G(u, v_0) dv_0, \notag
\end{equation}
and 
\begin{align*}
    -\phi_{t}(u, v)&= -\phi_{t}(2x_0+ v, v)+ \int_{2x_0+v}^u -\phi_{t, u}(u_0, v) du_0,\\
    &= -\phi_{t}(2x_0+ v, v)- \int_{2x_0+v}^u \phi_{t, u}(u_0, u_0- 2x_0) d u_0+ \int_{2x_0+v}^u \int_{u_0- 2x_0}^v G(u_0, v_0) dv_0 d u_0, \\
    &=  -\frac{1}{2}\Big(\phi_t(u- x_0, x_0)+ \phi_t(v+x_0, x_0)+ \phi_x(u- x_0, x_0)- \phi_x(v+ x_0, x_0)\Big)\\
     &\;\;\;\;\;\;+ \int_{2x_0+v}^u \int_{u_0- 2x_0}^v \left( (\phi_u\cdot \phi_v)\phi_t+  \frac{1}{4}a \phi_{tt}\right) (s, y) dv_0 d u_0\\
      &\;\;\;\;\;\;+ \int_{2x_0+v}^u \int_{u_0- 2x_0}^v (\phi_u\cdot\phi_{t, v}+ \phi_v\cdot \phi_{t, u})\phi(u_0, v_0) dv_0 d u_0.
\end{align*}
Simple change of variables yields, 
\begin{equation*}
    \int_{2x_0+v}^u \int_{u_0- 2x_0}^v \psi (s, y) dv_0 d u_0= -2\int_{x_0}^x \int_{t-x+y}^{t+x-y} \psi (s, y) ds dy.
\end{equation*}
Then, thanks to integration by parts, we get 
\begin{align*}
   &\;\;\;\; \int_{2x_0+v}^u \int_{u_0- 2x_0}^v \left((\phi_u\cdot\phi_{t, v})\phi\right)(u_0, v_0) dv_0 d u_0 \\
   &= -\int_{2x_0+v}^u \int_{u_0- 2x_0}^v \left((\phi_u\cdot\phi_{t})\phi_v+ (\phi_{u v}\cdot\phi_{t})\phi\right)(u_0, v_0) dv_0 d u_0 \\
   &\;\;\;\;\;\;+ \int_{2x_0+v}^u (\phi_u\cdot \phi_t)\phi(u_0, v)- (\phi_u\cdot \phi_t)\phi(u_0, u_0- 2x_0)    du_0 \\
   &= \int_{2x_0+v}^u \int_{u_0- 2x_0}^v \left(-(\phi_u\cdot\phi_{t})\phi_v+ \frac{a}{4} |\phi_t|^2 \phi\right)(u_0, v_0) dv_0 d u_0 \\
   &\;\;\;\;\;\;+ \int_{2x_0+v}^u (\phi_u\cdot \phi_t)\phi(u_0, v)- (\phi_u\cdot \phi_t)\phi(u_0, u_0- 2x_0)    du_0 
\end{align*}
and 
\begin{align*}
   &\;\;\;\;\; \int_{2x_0+v}^u \int_{u_0- 2x_0}^v \left((\phi_v\cdot\phi_{t, u})\phi\right)(u_0, v_0) dv_0 d u_0\\
    &= -\int_{v}^{u- 2x_0} \int_{v_0+ 2x_0}^u \left((\phi_v\cdot\phi_{t, u})\phi\right)(u_0, v_0) du_0 d v_0 \\
    &= \int_{v}^{u- 2x_0} \int_{v_0+ 2x_0}^u \left((\phi_v\cdot\phi_{t})\phi_u- \frac{a}{4} |\phi_t|^2 \phi\right)(u_0, v_0) du_0 d v_0 \\
    &\;\;\;\;\;\;\;+ \int_{v}^{u- 2x_0}  (\phi_v \cdot \phi_t)\phi(v_0+ 2x_0, v_0)- (\phi_v \cdot \phi_t)\phi(u, v_0) d v_0\\
    &= \int_{2x_0+v}^u \int_{u_0- 2x_0}^v \left(-(\phi_v\cdot\phi_{t})\phi_u+ \frac{a}{4} |\phi_t|^2 \phi\right)(u_0, v_0) dv_0 d u_0 \\
    &\;\;\;\;\;\;\;+ \int_{v}^{u- 2x_0}  (\phi_v \cdot \phi_t)\phi(v_0+ 2x_0, v_0)- (\phi_v \cdot \phi_t)\phi(u, v_0) d v_0.
\end{align*}

Combining the preceding calculations together we get that for any $(t, x)\in P$ with $x= x_0+d$,
\begin{align*}
    -\phi_t(t, x)&=
    -2\int_{x_0}^x \int_{t-x+y}^{t+x-y} \left( (\phi_u\cdot \phi_v)\phi_t-(\phi_u\cdot\phi_{t})\phi_v-(\phi_v\cdot\phi_{t})\phi_u+ \frac{a}{2} |\phi_t|^2 \phi \right) (s, y) ds dy\\
     &\;\; \;\;\;-\frac{1}{2}\Big(\phi_t(u- x_0, x_0)+ \phi_t(v+x_0, x_0)\Big)\\ 
     &\;\; \;\;\;  -\frac{1}{2} \int_{t- d}^{t+ d} \phi_{tx}(s, x_0) ds\\
    &\;\; \;\;\;-\frac{1}{2} \int_{x_0}^x a(y)\phi_t(t+x-y, y) dy +\frac{1}{2} \int_{x_0}^x a(y)\phi_t(t-x+y, y) dy\\
    &\;\; \;\;\; + \int_{2x_0+v}^u (\phi_u\cdot \phi_t)\phi(u_0, v)du_0- \int_{2x_0+v}^u(\phi_u\cdot \phi_t)\phi(u_0, u_0- 2x_0)    du_0 \\
    &\;\; \;\;\; + \int_{v}^{u- 2x_0}  (\phi_v \cdot \phi_t)\phi(v_0+ 2x_0, v_0)dv_0- \int_{v}^{u- 2x_0} (\phi_v \cdot \phi_t)\phi(u, v_0) d v_0,\\
    &=: \sum_{j=1}^6 f_j(t, x).
\end{align*}
 For some fixed $x=x_0+ d$ with $d\leq|S|$, by the construction of $P^S_{\alpha, \beta}(x_0)$ we are interested in  $t\in [\alpha+d, \beta-d]$: in the following we shall estimate the values of $|f_j(t, x)|$ or $\|f_j(t, x)\|_{L^2_t(\alpha+d, \beta-d)}$ for $j\in \{1,2..., 6\}$ successively.\\

\noindent {\bf 1)} Concerning $f_2(t, x)$, since $x= x_0+ d$, there is
\begin{equation*}
    \phi_t(u- x_0, x_0)= \phi_t(t+ d, x_0), \; \phi_t(v+ x_0, x_0)= \phi_t(t- d, x_0),
\end{equation*}
which immediately leads to 
\begin{equation}\label{es:f2}
    \|f_2(t, x)\|_{L^2_t(\alpha+d, \beta-d)}\leq \|\phi_t(t, x_0)\|_{L^2_t(\alpha, \beta)}.
\end{equation}

\noindent {\bf 2)} Concerning $f_4$ we know that
\begin{align*}
    \int_{\alpha+d}^{\beta-d}\left(\int_{x_0}^x a(y)\phi_t(t+x-y, y) dy\right)^2 dt&\lesssim  d \int_{\alpha+d}^{\beta-d}\int_{x_0}^x \phi_t^2(t+x-y, y) dy dt\\
    &\lesssim d^2 \|\phi_t\|_{L^{\infty}_x L^2_t(P)}^2,
\end{align*}
thus
\begin{equation}\label{es:f4}
     \|f_4(t, x)\|_{L^2_t(\alpha+d, \beta-d)}\lesssim d \|\phi_t\|_{L^{\infty}_x L^2_t(P)}.
\end{equation}

\noindent {\bf 3)} Now we turn to $f_5$ and $f_6$. By noticing  
\begin{align*}
   \left( \int_{2x_0+v}^u (\phi_u\cdot \phi_t)\phi(u_0, v) du_0\right)^2&\leq \int_{2x_0+v}^u |\phi_u|^2(u_0, v) du_0 \int_{2x_0+v}^u |\phi_t|^2(u_0, v) du_0\\
   &\leq \|\phi_u\|_{L^2_u L^{\infty}_v(P)}^2 \int_{x_0}^{x_0+ d} |\phi_t|^2(t-x_0-d+y, y)dy
\end{align*}
we get
\begin{align*}
    \int_{\alpha+d}^{\beta-d}\left( \int_{2x_0+v}^u (\phi_u\cdot \phi_t)\phi(u_0, v) du_0\right)^2 dt&\lesssim E(0)   \int_{\alpha+d}^{\beta-d}\int_{x_0}^{x_0+ d} |\phi_t|^2(t-x_0-d+y, y)dy dt \\
    &\lesssim d E(0) \|\phi_t\|_{L^{\infty}_x L^2_t(P)}^2.
\end{align*}
Similarly,
\begin{align*}
 \int_{2x_0+v}^u (\phi_u\cdot \phi_t)\phi(u_0, u_0- 2x_0) du_0&=   \int_{t-d}^{t+d} (\phi_u\cdot \phi_t)\phi(s, x_0) ds \\
   &= \frac{1}{2} \int_{t-d}^{t+d} (\phi_x\cdot \phi_t)\phi(s, x_0) ds +  \frac{1}{2} \int_{t-d}^{t+d} |\phi_t|^2\phi(s, x_0) ds. 
\end{align*}
 Noticing that  
\begin{gather*}
    \int_{t-d}^{t+d} |\phi_t|^2(s, x_0) ds\leq \|\phi_t(t, x_0)\|_{L^2_t(\alpha, \beta)}^2,\\
   \int_{t-d}^{t+d} |\phi_x|^2(s, x_0) ds\leq \|\phi_x(t, x_0)\|_{L^2_t(\alpha, \beta)}^2\lesssim E(0),   
\end{gather*}
where in the last inequality we have applied the energy estimate \eqref{es:e3}, 
we have 
\begin{align*}
   \int_{\alpha+d}^{\beta-d}  \left(\int_{t-d}^{t+d} |\phi_t|^2\phi(s, x_0) ds\right)^2 dt&\leq \|\phi_t(t, x_0)\|_{L^2_t(\alpha, \beta)}^2 \int_{\alpha+d}^{\beta-d} \int_{t-d}^{t+d} |\phi_t|^2(s, x_0) ds dt\\
   &\leq 2d \|\phi_t(t, x_0)\|_{L^2_t(\alpha, \beta)}^4,
\end{align*}
and 
\begin{align*}
   \int_{\alpha+d}^{\beta-d}  \left(\int_{t-d}^{t+d} (\phi_x\cdot \phi_t)\phi(s, x_0) ds\right)^2 dt&\leq  \int_{\alpha+d}^{\beta-d}  \left(\int_{t-d}^{t+d} \phi_x^2(s, x_0) ds\right)  \left(\int_{t-d}^{t+d} \phi_t^2(s, x_0) ds\right) dt \\
   &\leq 2d  \|\phi_x(t, x_0)\|_{L^2_t(\alpha, \beta)}^2  \|\phi_t(t, x_0)\|_{L^2_t(\alpha, \beta)}^2 \\
   &\lesssim d  E(0)  \|\phi_t(t, x_0)\|_{L^2_t(\alpha, \beta)}^2.
\end{align*}
Hence  
\begin{align}     
 \|f_5(t, x)\|_{L^2_t(\alpha+d, \beta-d)}+ \|f_6(t, x)\|_{L^2_t(\alpha+d, \beta-d)} 
&\lesssim \sqrt{d} \sqrt{E(0)} \|\phi_t\|_{L^{\infty}_x L^2_t(P)}+ \sqrt{d} \|\phi_t(t, x_0)\|_{L^2_t(\alpha, \beta)}^2 \notag\\
&\lesssim \sqrt{d} \sqrt{E(0)} \|\phi_t\|_{L^{\infty}_x L^2_t(P)}\label{es:f56}
\end{align} 

\noindent {\bf 4)}  Then, we turn to $f_1$, again, by treating the items one by one. Since
\begin{align*}
\int_{x_0}^x \int_{t-x+y}^{t+x-y} \left( (\phi_u\cdot \phi_v)\phi_t\right) (s, y) ds dy&\leq \|\phi_u\cdot \phi_v\|_{L^2_{t,x}}\|\phi_t\|_{L^2_{t,x}} \\
&\leq 2\sqrt{d}\|\phi_u\cdot \phi_v\|_{L^2_{u, v}}\|\phi_t\|_{L^{\infty}_x L^2_t}\\
&\lesssim \sqrt{d}E(0) \|\phi_t\|_{L^{\infty}_x L^2_t(P)}
\end{align*}
and
\begin{align*}
  \int_{x_0}^x \int_{t-x+y}^{t+x-y} \left( \frac{a}{2} |\phi_t|^2 \phi\right) (s, y) ds dy\lesssim d    \|\phi_t\|_{L^{\infty}_x L^2_t(P)}^2,
\end{align*}
we have
\begin{align*}
    |f_1|(t, x)&\lesssim \sqrt{d}E(0) \|\phi_t\|_{L^{\infty}_x L^2_t(P)}+ d    \|\phi_t\|_{L^{\infty}_x L^2_t(P)}^2 \\
    &\lesssim \left(\sqrt{d}E(0)+ d\sqrt{E(0)}\right)\|\phi_t\|_{L^{\infty}_x L^2_t(P)},
\end{align*}
which further implies that
\begin{equation}\label{es:f1}
    \|f_1(t, x)\|_{L^2_t(\alpha+d, \beta-d)}\lesssim \left(\sqrt{d}E(0)+ d\sqrt{E(0)}\right)\|\phi_t\|_{L^{\infty}_x L^2_t(P)}.
\end{equation}

\noindent {\bf 5)} Finally, we deal with the $\|f_3(t, x)\|_{L^2_t(\alpha+d, \beta-d)}$ term,  for which one also requires the smallness of $\phi_{tx}$ to close the loop. Indeed, it sounds natural to replace the integration of $\phi_{tx}$ by  $\phi_x$ to simplify the presentation,
\begin{equation*}
    \int_{t- d}^{t+ d} \phi_{tx}(s, x_0) ds= \phi_x(t+d, x_0)- \phi_x(t-d, x_0),
\end{equation*}
However, by performing this integration we immediately lose  the control of smallness: because we are not able to compensate the  $\|\phi_x(t, x_0)\|_{L^2_t(\alpha, \beta)}^2$ term in our bootstrap argument, which is not supposed to be small in general.  

Let us define 
\begin{equation*}
g(t):=     \eta_{\alpha}^{\beta}[\tau](t)  \phi_{tx}(t, x_0) \;  \textrm{ with } \tau\in (0, 1),
\end{equation*}
for which we assume that the upper bound of $\|\langle \partial_t\rangle^{-1} g(t)\|_{L^2_t(\R)}$ is known.
Then,  for $t\in (\alpha+d, \beta-d)$ and $s\in (t-d, t+d)$ there is 
\begin{equation*}
    \phi_{tx}(s, x_0)=  \eta_{\alpha}^{\beta}[\tau](s)  \phi_{tx}(s, x_0)= g(s).
\end{equation*}
This implies that for $x= x_0+d$ and $t\in (\alpha+d, \beta-d)$,
\begin{equation*}
    f_3(t, x)= -\frac{1}{2}\int_{t-d}^{t+d} g(s) ds.
\end{equation*}
By performing the inverse Fourier transformation we get 
\begin{equation*}
    g(s)= \int_{\lambda\in\R} e^{is\lambda} \hat{g}(\lambda) d\lambda,
\end{equation*}
thus 
\begin{align*}
    -2  f_3(t, x)&= \int_{t-d}^{t+d} \int_{\lambda\in\R} e^{is\lambda} \hat{g}(\lambda) d\lambda ds \\
    &= \int_{t-d}^{t+d} \int_{|\lambda|\leq 1} e^{is\lambda} \hat{g}(\lambda) d\lambda ds+ \int_{t-d}^{t+d} \int_{|\lambda|> 1} e^{is\lambda} \hat{g}(\lambda) d\lambda ds.
\end{align*}

Concerning the first candidate in the preceding formula there is a trivial bound
\begin{equation*}
    \left|\left(\int_{|\lambda|\leq 1} e^{is\lambda} \hat{g}(\lambda) d\lambda\right)\right|^2\leq  2\int_{|\lambda|\leq 1}  \langle\lambda\rangle^{-2}|\hat{g}(\lambda)|^2 d\lambda\leq 2\|\langle \partial_t\rangle^{-1} g(t)\|_{L^2_t(\R)}^2.
\end{equation*}
Therefore,
\begin{equation}
    \left|\int_{t-d}^{t+d} \int_{|\lambda|\leq 1} e^{is\lambda} \hat{g}(\lambda) d\lambda ds\right|\leq 3d\|\langle \partial_t\rangle^{-1} g(t)\|_{L^2_t(\R)}. \notag
\end{equation}

Next, we turn to the high frequency part of $f_3(t, x)$: thanks to symmetry reasons it further suffices to treat the positive high frequency part,
\begin{align*}
    \int_{t-d}^{t+d} \int_{\lambda> 1} e^{is\lambda} \hat{g}(\lambda) d\lambda ds&=  \int_{\lambda> 1} \hat{g}(\lambda) \int_{t-d}^{t+d} e^{is\lambda}  ds d\lambda \\
    &= -i \int_{\lambda> 1} \hat{g}(\lambda) \left(\frac{e^{i(t+d)\lambda}}{\lambda}- \frac{e^{i(t-d)\lambda}}{\lambda}\right) d\lambda.
\end{align*}
By denoting 
\begin{equation*}
    F(\bar u):= \int_{\R} \left(\frac{\chi_{\lambda>1}}{\lambda} \hat{g}(\lambda)\right) e^{i \lambda \bar u} d\lambda
\end{equation*}
we know that 
\begin{align*}
    \|\int_{\lambda> 1} \hat{g}(\lambda) \frac{e^{i(t+d)\lambda}}{\lambda} d\lambda\|_{L^2_t(\alpha+d, \beta-d)}&=  \|\int_{\lambda> 1} \hat{g}(\lambda) \frac{e^{i\bar u\lambda}}{\lambda} d\lambda\|_{L^2_{\bar u}(\alpha+2d, \beta)}\\
    &= \|F(\bar u)\|_{L^2_{\bar u}(\alpha+2d, \beta)}\\
    &\leq \|F(\bar u)\|_{L^2_{\bar u}(\R)}\\
    &= \|\frac{\chi_{\lambda>1}}{\lambda} \hat{g}(\lambda)
    \|_{L^2_{\lambda}(\R)}\\
    &\leq \frac{3}{2}\|\langle \partial_t\rangle^{-1} g(t)
    \|_{L^2_{t}(\R)}.
\end{align*}
Similarly
\begin{align*}
    \|\int_{\lambda> 1} \hat{g}(\lambda) \frac{e^{i(t-d)\lambda}}{\lambda} d\lambda\|_{L^2_t(\alpha+d, \beta-d)}\leq \frac{3}{2}\|\langle \partial_t\rangle^{-1} g(t)
    \|_{L^2_{t}(\R)}.
\end{align*}
Hence
\begin{equation}\label{es:f3}
    \|f_3(t, x)\|_{L^2_t(\alpha+d, \beta-d)}\leq \left(\frac{3}{2}+ 15d\right)\|\langle \partial_t\rangle^{-1} \left(\eta_{\alpha}^{\beta}[\tau](t) \phi_{tx}(t, x_0)\right)
    \|_{L^2_{t}(\R)}.
\end{equation}

In conclusion, by combining the estimates \eqref{es:f2}, \eqref{es:f4}, \eqref{es:f56}, \eqref{es:f1} and \eqref{es:f3}  we obtain: for $x= x_0+ d$ and for any $\tau\in (0, 1)$, there is 
\begin{align}
   &\;\;\;\; \|\phi_t(t, x)\|_{L^2_t(\alpha+d, \beta-d)} \notag \\
    &\leq \|\phi_t(t, x_0)\|_{L^2_t(\alpha, \beta)}+ 3\|\langle \partial_t\rangle^{-1} \left(\eta_{\alpha}^{\beta}[\tau](t) \phi_{tx}(t, x_0)\right)
    \|_{L^2_{t}(\R)} + C\left(d+ \sqrt{d} \sqrt{E(0)}\right)\|\phi_t\|_{L^{\infty}_x L^2_t(P)} \notag\\
    &\leq \|\phi_t(t, x_0)\|_{L^2_t(\alpha, \beta)}+ 3\|\langle \partial_t\rangle^{-1} \left(\eta_{\alpha}^{\beta}[\tau](t) \phi_{tx}(t, x_0)\right)
    \|_{L^2_{t}(\R)}  + \widetilde C \sqrt{d} \|\phi_t\|_{L^{\infty}_x L^2_t(P)}, \notag
\end{align}
where the constant $\widetilde C$ is independent of $\alpha\in [-15\pi, 0), \beta\in (2\pi, 15\pi], d\in (0, 1/10), \tau\in (0, 1), x_0\in [0, 4\pi]$ and $E(0)\leq 2\pi$. 
In order to close the loop of bootstrap, it suffices to define 
\begin{equation}\label{def:S0}
    S_0:= \left(\frac{1}{2 \widetilde C}\right)^2.
\end{equation}
The preceding inequality immediately yields 
\begin{equation*}
    \|\phi_t\|_{L^{\infty}_x L^2_t(P^{S_0}_{\alpha, \beta}(x_0))}\leq 2\|\phi_t(t, x_0)\|_{L^2_t(\alpha, \beta)}+ 6\|\langle \partial_t\rangle^{-1} \left(\eta_{\alpha}^{\beta}[\tau](t) \phi_{tx}(t, x_0)\right)
    \|_{L^2_{t}(\R)}. 
\end{equation*}

\noindent {\bf Step 2: on the choice of $z$ such that $\phi_{tx}(t, z)$ is small.}

Different from  $\phi_t$ for which we have get a uniform bound on $L^2_t$-norm on a strip with width $S_0$, for the  $\phi_{tx}$ term  we only select one specific slide, namely some  $\bar x\in [x_0+ S_0/2, x_0+ S_0]$ such that the $H^{-1}_t$-norm of $\phi_{tx}(\cdot, \bar x)$ is small. 
First we construct a $2\pi$-periodic, non-negative, smooth cutoff function $b(x)$,
\begin{equation}
    b(x)=1 \textrm{ on } \left[\frac{S_0}{8},  \frac{3S_0}{8}\right], \;\;  b(x)=0 \textrm{ on } \S\setminus\left[0, \frac{S_0}{2}\right],
\end{equation}
and further define $b_{z}$ as 
\begin{equation}
    b_z(x):= b(x-z).
\end{equation}

Armed with the smallness of 
\begin{equation*}
    \|\phi_t(t, x)\|_{L^{\infty}_x(x_0+S_0/2, x_0+ S_0; L^2_t(\alpha+ S_0, \beta- S_0))}
\end{equation*}
being proved in {\bf Step 1},
 in this step we  use it  to dominate the following:
\begin{equation*}  
\int_{\mathbb{S}^1} b_{x_0+ S_0/2}(x) \int_{\mathbb{R}} \left(\langle \partial_t\rangle^{-1} \left(\eta_{\alpha+ S_0+ \tau_0}^{\beta-S_0-\tau_0}[\tau_0](t) \phi_{tx}(t, x)\right)\right)^2 dt dx.
\end{equation*}
Since $\tau_0$ and $S_0$ are fixed, the $C^2$-norms of the truncated functions $\eta_{\alpha+ S_0+ \tau_0}^{\beta- S_0-\tau_0}[\tau_0](t)$ and $b_z(x)$ are uniformly bounded. For  ease of notations, in the rest part of this section we simply denote the functions $\eta_{\alpha+ S_0+ \tau_0}^{\beta- S_0-\tau_0}[\tau_0](t)$ by $\eta(t)$ and $b_{x_0+ S_0/2}(x)$ by $\tilde b(x)$.\\

Similar to the proof of Lemma~\ref{lemma:choose}, there is 
\begin{gather*}
   \|\langle \partial_t\rangle^{-1} \left(\eta(t) \phi_{tx}(t, x)\right)\|_{L^2_t(\R)}\lesssim  \|\phi_x\|_{L^2_t(\alpha+ S_0, \beta- S_0)}\\
    \|\langle \partial_t\rangle^{-1} \left(\eta(t) \phi_{tt}(t, x)\right)\|_{L^2_t(\R)}\lesssim  \|\phi_t\|_{L^2_t(\alpha+ S_0, \beta- S_0)}, \\
   \|\langle \partial_t\rangle^{-1} \left(\eta(t) \phi_{tx}(t, x)\right)\|_{L^2_{t, x}(\R\times \S)}^2\lesssim  E(0),
\end{gather*}
and
\begin{align*}
&\;\;\;\; \int_{\mathbb{S}^1}  \int_{\mathbb{R}}\tilde b(x) \left(\langle \partial_t\rangle^{-1} \left(\eta(t) \phi_{tx}(t, x)\right)\right)\cdot  \left(\langle \partial_t\rangle^{-1} \left(\eta(t) \phi_{tx}(t, x)\right)\right) dt dx \\
&=  \int_{\mathbb{S}^1}  \int_{\mathbb{R}}\tilde b(x) \left(\langle \partial_t\rangle^{-1} \left(\eta(t) \phi_{tt}\right)\right)\cdot  \left(\langle \partial_t\rangle^{-1} \left(\eta(t) \phi_{xx}\right)\right) dt dx \\
&\;\;\;+ \int_{\mathbb{S}^1}  \int_{\mathbb{R}}\tilde b(x) \left(\langle \partial_t\rangle^{-1} \left(\eta_t(t) \phi_{t}\right)\right)\cdot  \left(\langle \partial_t\rangle^{-1} \left(\eta(t) \phi_{xx}\right)\right) dt dx \\
&\;\;\;+ \int_{\mathbb{S}^1}  \int_{\mathbb{R}}\tilde b(x) \left(\langle \partial_t\rangle^{-1} \left(\eta(t) \phi_{t}\right)\right)\cdot  \left(\langle \partial_t\rangle^{-1} \left(\eta_t(t) \phi_{xx}\right)\right) dt dx \\
&\;\;\;- \int_{\mathbb{S}^1}  \int_{\mathbb{R}}\tilde b_x(x) \left(\langle \partial_t\rangle^{-1} \left(\eta_t(t) \phi_{t}\right)\right)\cdot  \left(\langle \partial_t\rangle^{-1} \left(\eta(t) \phi_{tx}\right)\right) dt dx \\
&=: I+ II+ III+ IV.
\end{align*}

The same calculation yields
\begin{align*}
    IV &\lesssim \sqrt{E(0)} \left\|\sqrt{\tilde b(x)}  \left(\eta_t(t) \phi_{t}\right)\right\|_{L^2_x(\S; L^2_t(\R))}   \\
    &\lesssim  \sqrt{E(0)}\|\phi_t(t, x)\|_{L^{\infty}_x(x_0+S_0/2, x_0+ S_0; L^2_t(\alpha+ S_0, \beta- S_0))}.
\end{align*}
We also know that 
\begin{align*}
    \left\|\sqrt{\tilde b} \langle \partial_t\rangle^{-1} \left(\eta \phi_{tt}\right)\right\|_{L^2_x(\S; L^2_t(\R))}&\lesssim  \left\|  \phi_{t}\right\|_{L^2_x(x_0+ S_0/2, x_0+ S_0; L^2_t(\textrm{supp }\eta))} \\
     &\lesssim  \|\phi_t(t, x)\|_{L^{\infty}_x(x_0+S_0/2, x_0+ S_0; L^2_t(\alpha+ S_0, \beta- S_0))}
\end{align*}
and  
\begin{align*}
     \left\|\sqrt{\tilde b} \langle \partial_t\rangle^{-1} \left(\eta_t \phi_{t}\right)\right\|_{L^2_x(\S; L^2_t(\R))}&\lesssim  \|\phi_t(t, x)\|_{L^{\infty}_x(x_0+S_0/2, x_0+ S_0; L^2_t(\alpha+ S_0, \beta- S_0))} \\
    \left\|\sqrt{\tilde b} \langle \partial_t\rangle^{-1} \left(\eta \phi_{t}\right)\right\|_{L^2_x(\S; L^2_t(\R))}&\lesssim \|\phi_t(t, x)\|_{L^{\infty}_x(x_0+S_0/2, x_0+ S_0; L^2_t(\alpha+ S_0, \beta- S_0))}. 
\end{align*}
Again, by adapting the wave maps equation we obtain 
\begin{equation*}
    \left\|\langle \partial_t\rangle^{-1} \left(\eta(t) \phi_{xx}\right)\right\|_{L^2_x(\S; L^2_t(\R))}+  \left\|\langle \partial_t\rangle^{-1} \left(\eta_t(t) \phi_{xx}\right)\right\|_{L^2_x(\S; L^2_t(\R))}\lesssim \sqrt{E(0)}.
\end{equation*}
Therefore 
\begin{equation*}
    I+ II+ III\lesssim \sqrt{E(0)} \|\phi_t(t, x)\|_{L^{\infty}_x(x_0+S_0/2, x_0+ S_0; L^2_t(\alpha+ S_0, \beta- S_0))},
\end{equation*}
which, together with the estimate on $IV$, yields
\begin{align}  
&\;\;\;\;\;  \int_{\mathbb{S}^1} b_{x_0+ S_0/2}(x) \int_{\mathbb{R}} \left(\langle \partial_t\rangle^{-1} \left(\eta_{\alpha+ S_0+ \tau_0}^{\beta-S_0-\tau_0}[\tau_0](t) \phi_{tx}(t, x)\right)\right)^2 dt dx \notag\\
&\leq C_H  \sqrt{E(0)}\|\phi_t(t, x)\|_{L^{\infty}_x(x_0+S_0/2, x_0+ S_0; L^2_t(\alpha+ S_0, \beta- S_0))}, \notag
\end{align}
where the value of $C_H$ does not depend on the choice of $x_0\in \S, \alpha\in [-15\pi, 0], \beta\in [2\pi, 15\pi]$ and $E(0)\leq 2\pi$. \\

As a direct consequence of the preceding inequality, we are able to find  some point $\bar x\in [x_0+ \frac{5 S_0}{8}, x_0+ \frac{7 S_0}{8}]$ such that 
\begin{align*}
&\;\;\;\;\;  \int_{\mathbb{R}} \left(\langle \partial_t\rangle^{-1} \left(\eta_{\alpha+ S_0+ \tau_0}^{\beta-S_0-\tau_0}[\tau_0](t) \phi_{tx}(t, \bar x)\right)\right)^2 dt \\
&\leq \frac{4 C_H}{S_0}\sqrt{E(0)}\|\phi_t(t, x)\|_{L^{\infty}_x(x_0+S_0/2, x_0+ S_0; L^2_t(\alpha+ S_0, \beta- S_0))} \\
&\leq \frac{24 C_H}{S_0}\sqrt{E(0)} \left(  \|\phi_t(t, x_0)\|_{L^2_t(\alpha, \beta)}+ \|\langle\partial_t \rangle^{-1}\left(\eta_{\alpha}^{\beta}[\tau_0](t) \phi_{tx}(t, x_0)\right)\|_{L^2_t(\R)} \right).
\end{align*}
This concludes the proof of Inequality \eqref{eq:bound:p182} by choosing 
\begin{equation}\label{def:CS0}
    C_{S_0}:= \left(\frac{24 C_H}{S_0}\right)^{\frac{1}{2}}.
\end{equation}

\end{proof}

\subsubsection{\bf Proof of Proposition~\ref{prop:2}}\label{subsec:propro2}
Armed with Lemma~\ref{lemma:choose} and Lemma~\ref{lem:keypropa} we show that Proposition~\ref{prop:2} is a direct consequence of these properties.

\begin{proof}[Proof of Proposition~\ref{prop:2}]
 We start the proof by fixing some constants: $C_0$ by Lemma~\ref{lemma:choose},  $S_0$  by \eqref{def:S0}, $C_{S_0}$ by \eqref{def:CS0} and $\tau_0$ as $S_0/16$.
 Let $\phi$ be a solution of the damped wave maps equation \eqref{equation:dwm} satisfying $E(0)\leq 2\pi$.  Assume as we may that 
\begin{equation*}
      \int_{-16\pi}^{16\pi}\int_{\mathbb{S}^1}a(x)|\phi_t|^2(t, x) dxdt\leq \varepsilon  E(0).
\end{equation*}

\noindent {\it Step 0:} Define $(\alpha_0, \beta_0):= (-15\pi, 15\pi)$. By applying Lemma~\ref{lemma:choose} we are able to find some $x_0\in [0, 2\pi)$ such that 
\begin{equation*}
    \max\left\{ \|\phi_t(t, x_0)\|_{L^2_t(\alpha_0, \beta_0)}, \;    \|\langle\partial_t\rangle^{-1}\left(\eta_{\alpha_0}^{\beta_0}[\tau_0] \phi_{tx}\right)(t, x_0)\|_{L^2_t(\R)}\right\} \leq \varepsilon^{\frac{1}{4}} \left(\frac{C_0}{\tau_0^2}\right)^{\frac{1}{2}} (E(0))^{\frac{1}{2}}.
\end{equation*}

\noindent {\it Step 1:}  Next, we define $(\alpha_1, \beta_1):= (\alpha_0+ S_0+ \tau_0, \beta_0- S_0- \tau_0)$. Since $\tau_0\in (0, 1), \alpha_0\in [-15\pi, 0), \beta_0\in (2\pi, 15\pi]$ and $x_0\in [0, 2\pi]$,  we are allowed to apply Lemma~\ref{lem:keypropa} to find some point  $x_1$ belongs to $[x_0+ S_0/2, x_0+ S_0]$ such that
\begin{align*}
     & \;\;\;\;\;  \|\phi_t\|_{L^{\infty}_x(x_0, x_1; L^2_t(\alpha_1, \beta_1))} \\
     &\leq 2  \|\phi_t(t, x_0)\|_{L^2_t(\alpha_0, \beta_0)}+ 6\|\langle\partial_t \rangle^{-1}\left(\eta_{\alpha_0}^{\beta_0}[\tau_0](t) \phi_{tx}(t, x_0)\right)\|_{L^2_t(\R)}, \\
     &\leq 8 \varepsilon^{\frac{1}{4}} \left(\frac{C_0}{\tau_0^2}\right)^{\frac{1}{2}} (E(0))^{\frac{1}{2}},
\end{align*}
and 
\begin{align}
   &\;\;\;\;\; \|\langle \partial_t\rangle^{-1} \left(\eta_{\alpha_1}^{\beta_1}[\tau_0](t)\phi_{tx}(t, x_1)\right)\|_{L^2_t(\R)} \notag \\
   &\leq C_{S_0} \left(E(0)\right)^{\frac{1}{4}}\left(  \|\phi_t(t, x_0)\|_{L^2_t(\alpha_0, \beta_0)}+ \|\langle\partial_t \rangle^{-1}\left(\eta_{\alpha_0}^{\beta_0}[\tau_0](t) \phi_{tx}(t, x_0)\right)\|_{L^2_t(\R)} \right)^{\frac{1}{2}} \notag  \\
   &\leq 2 C_{S_0}\varepsilon^{\frac{1}{8}} \left(\frac{C_0}{\tau_0^2}\right)^{\frac{1}{4}} (E(0))^{\frac{1}{2}}. \notag
\end{align}
Thus 
\begin{align*}
     \max\left\{\|\phi_t\|_{L^{\infty}_x(x_0, x_1; L^2_t(\alpha_1, \beta_1))}, \;  \| \langle \partial_t\rangle^{-1} \left(\eta_{\alpha_1}^{\beta_1}[\tau_0](t)\phi_{tx}(t, x_1)\right)\|_{L^2_t(\R)}\right\}\leq  2 C_{S_0}\varepsilon^{\frac{1}{8}} \left(\frac{C_0}{\tau_0^2}\right)^{\frac{1}{2}} (E(0))^{\frac{1}{2}}.
\end{align*}

\noindent {\it Step 2:} Then, we define $(\alpha_2, \beta_2):= (\alpha_0+ 2S_0+ 2\tau_0, \beta_0- 2S_0- 2\tau_0)$. Since $\tau_0\in (0, 1), \alpha_1\in [-15\pi, 0), \beta_1\in (2\pi, 15\pi]$ and $x_1\in [0, 4\pi]$,  we are allowed to apply Lemma~\ref{lem:keypropa} to find some point  $x_2$ belongs to $[x_1+ S_0/2, x_1+ S_0]$ such that
\begin{align*}
    \max\left\{\|\phi_t\|_{L^{\infty}_x(x_1, x_2; L^2_t(\alpha_2, \beta_2))}, \;  \|\langle \partial_t\rangle^{-1} \left(\eta_{\alpha_2}^{\beta_2}[\tau_0](t)\phi_{tx}(t, x_2)\right)\|_{L^2_t(\R)}\right\} \leq (2 C_{S_0})^2\varepsilon^{\frac{1}{16}} \left(\frac{C_0}{\tau_0^2}\right)^{\frac{1}{2}} (E(0))^{\frac{1}{2}}.
\end{align*}

\noindent {\it Step 3:} We iterate this procedure: suppose that for every $k\in \{1, 2,..., n\}$ we have found 
\begin{equation*}
    (\alpha_k, \beta_k)= (\alpha_0+ kS_0+ k\tau_0, \beta_0- kS_0-k\tau_0) \textrm{ and } x_k\in [x_{k-1}+ S_0/2, x_{k-1}+ S_0], 
\end{equation*}
such that 
\begin{align*}
     \max\left\{\|\phi_t\|_{L^{\infty}_x(x_{k-1}, x_k; L^2_t(\alpha_k, \beta_k))}, \;  \|\langle \partial_t\rangle^{-1} \left(\eta_{\alpha_k}^{\beta_k}[\tau_0](t)\phi_{tx}(t, x_k)\right)\|_{L^2_t(\R)}\right\} \leq (2 C_{S_0})^k\varepsilon^{\frac{1}{2^{k+2}}} \left(\frac{C_0}{\tau_0^2}\right)^{\frac{1}{2}} (E(0))^{\frac{1}{2}}.
\end{align*}

If $x_{n-1}<x_0+ 2\pi\leq x_n$, then we stop the procedure. Now we notice from the choices of $x_k$ that $n\in [\frac{2\pi}{S_0}, \frac{4\pi}{S_0}]$, and that for every $k\in \{1, 2,..., n\}$ we have $\alpha_k\in [-15\pi, 0), \beta_k\in [3\pi, 15\pi]$ and $x_k\in [0, 4\pi]$.

If $x_{n}<x_0+ 2\pi$, then since $n\leq \frac{4\pi}{S_0}$ we conclude from the choice of $(\alpha_k, \beta_k, x_k)$ that $\alpha_n\in [-15\pi, 0), \beta_n\in (2\pi, 15\pi]$ and $x_n\in [0, 4\pi]$. Consequently, we are allowed to use Lemma~\ref{lem:keypropa} to find some point $x_{n+1}\in [x_n+ S_0/2, x_n+ S_0]$ such that 
\begin{align*}
     &\;\;\;\;\;  \max\left\{\|\phi_t\|_{L^{\infty}_x(x_{n}, x_{n+1}; L^2_t(\alpha_{n+1}, \beta_{n+1}))}, \;  \|\langle \partial_t\rangle^{-1} \left(\eta_{\alpha_{n+1}}^{\beta_{n+1}}[\tau_0](t)\phi_{tx}(t, x_{n+1})\right)\|_{L^2_t(\R)}\right\} \\
     &\leq (2 C_{S_0})^{n+1}\varepsilon^{\frac{1}{2^{n+3}}} \left(\frac{C_0}{\tau_0^2}\right)^{\frac{1}{2}} (E(0))^{\frac{1}{2}},
\end{align*}
where $ (\alpha_{n+1}, \beta_{n+1})= (\alpha_0+ (n+1)S_0+ (n+1)\tau_0, \beta_0- (n+1)S_0- (n+1)\tau_0)$. \\

\noindent {\it Step 4:} In conclusion we have found some $N\in [\frac{2\pi}{S_0}, \frac{4\pi}{S_0}]$ such that $x_{N-1}< x_0+ 2\pi\leq x_N$ and that for every $k\in \{1, 2,..., N\}$ there is
\begin{gather*}
    \alpha_k= \alpha+0+ kS_0+ k\tau_0\in [-15\pi, 0), \\
    \beta_k= \beta_0- k S_0- k\tau_0\in [3\pi, 15\pi], \\
    x_k\in [x_{k-1}+ S_0/2, x_{k-1}+ S_0],
\end{gather*}
and 
\begin{align*}
     &\;\;\;\;\;  \max\left\{\|\phi_t\|_{L^{\infty}_x(x_{k-1}, x_k; L^2_t(\alpha_k, \beta_k))}, \;  \|\langle \partial_t\rangle^{-1} \left(\eta_{\alpha_k}^{\beta_k}[\tau_0](t)\phi_{tx}(t, x_k)\right)\|_{L^2_t(\R)}\right\} \\
     &\leq (2C_{S_0})^k\varepsilon^{\frac{1}{2^{k+2}}} \left(\frac{C_0}{\tau_0^2}\right)^{\frac{1}{2}} (E(0))^{\frac{1}{2}}.
\end{align*}
Hence, thanks to the $2\pi$-periodicity of $\phi(x)$, we know that 
\begin{equation*}
    \|\phi_t\|_{L^{\infty}_x(0, 2\pi; L^2_t(0, 3\pi))}\leq (2C_{S_0})^N\varepsilon^{\frac{1}{2^{N+2}}} \left(\frac{C_0}{\tau_0^2}\right)^{\frac{1}{2}} (E(0))^{\frac{1}{2}},
\end{equation*}
which concludes the proof of Proposition~\ref{prop:2} by setting $\varepsilon_0= \varepsilon_0(\delta)$ in such fashion that
\begin{equation*}
  (2C_{S_0})^{2 [4\pi/S_0]}\varepsilon_0^{\frac{1}{2^{[4\pi/S_0]+1}}} \left(\frac{C_0}{\tau_0^2}\right)= \delta.
\end{equation*}
In other words there exist some $p$ and $C_p$ effectively computable such that  in Proposition~\ref{prop:2},
\begin{equation}
    \varepsilon_0=\varepsilon_0(\delta) = C_p \delta^{p}.
\end{equation}

\end{proof}

\subsection{Proof of Proposition~\ref{prop:3}}\label{subsec:prop3}
This section is devoted to the proof of the local exponential stability of the damped wave maps equation: the aim is to  benefit from the exponential stability of the scalar damped wave equations and treat the nonlinear terms as perturbation.

 Recall the following quantitative results concerning  stabilization of wave equations, the explicit decay rate for the one dimensional damped wave equation can be calculated directly. We also refer to \cite{Anantharaman-Leautaud-2014} for a detailed review on  stability of the damped wave equations. 

\begin{lemma}[Exponential stabilization of wave equations]\label{lem:lwedecay}
For any $T\geq 2\pi$ there exists some $J_T>0$ effectively computable such that the solution of the wave equation 
\begin{equation*}
    \begin{cases}
    -y_{tt}+ y_{xx}= a(x) y_t, \\
    y(0, x)= y_0, \; y_t(0, x)= y_1,
    \end{cases}
\end{equation*}
satisfies
\begin{equation*}
    J_T E_1(0)\leq 2\int_0^T \int_{\S} a(x) (y_t)^2(t, x)\, dx dt 
\end{equation*}
where 
\begin{equation*}
    E_1(t):= \int_{\S} (y_x(t, x))^2+ (y_t(t, x))^2 dx.
\end{equation*}
\end{lemma}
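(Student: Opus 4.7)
The aim is to establish the observability inequality with effectively computable $J_T$ for the one-dimensional damped wave equation on $\S$, exploiting that the geometric control condition is automatic once $T \geq 2\pi$, the travel time of any characteristic around the circle. The standard energy identity, obtained by testing the equation against $y_t$, yields
\begin{equation*}
E_1(0) - E_1(T) = 2\int_0^T\!\int_{\S} a(x)\, y_t^2\,dx\,dt,
\end{equation*}
so $E_1$ is non-increasing and it suffices to produce some $J_T \in (0,1)$ with the one-step contraction $E_1(T) \leq (1-J_T)E_1(0)$.

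To reach this contraction I would pass to the Riemann invariants $p := y_t + y_x$ and $q := y_t - y_x$, which satisfy the coupled transport system
\begin{equation*}
(\partial_t - \partial_x) p = -\tfrac{a(x)}{2}(p+q), \qquad (\partial_t + \partial_x) q = -\tfrac{a(x)}{2}(p+q),
\end{equation*}
with $E_1(t) = \tfrac{1}{2}\int_\S(p^2+q^2)\,dx$ and $y_t = (p+q)/2$. Each invariant is transported along one of the two families of null characteristics and dissipated on $\omega_0$. Fixing $x_0 \in \S$ and integrating the equation for $p$ along the leftward characteristic $s \mapsto x_0 - s$ on $[0,T]$ yields a Volterra-type identity; since $T \geq 2\pi$, the characteristic completes at least one full revolution of $\S$ and in particular spends total time at least $l_0$ inside $\omega_0$, where $a \geq 1$. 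Applying Cauchy-Schwarz, performing the symmetric computation for $q$, and integrating over $x_0 \in \S$, one bounds $\int_\S(p^2+q^2)(T,\cdot)\,dx$ by $E_1(0)$ minus a multiple of $\int_0^T\!\int_{\S} a(p+q)^2\,dx\,dt$, which, after reinserting the energy identity, closes the estimate.

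The main obstacle is the cross term $\int a\, p q\, dx$ that appears when trying to propagate $\int p^2$ or $\int q^2$ separately: it is not sign-definite, and so neither invariant dissipates on its own. The resolution is to exploit the periodicity of $\S$ together with the revolution condition $T \geq 2\pi$, so that after one lap each invariant is a full transport of its initial value minus a contribution supported only on $\omega_0$, and then to absorb the cross term into $\int\!\int a(p+q)^2$ via a Cauchy-Schwarz step, tracking constants explicitly in $\|a\|_{L^\infty}$, $l_0$, and $T$. If one is willing to forgo explicit constants, the conclusion alternatively follows by a contradiction-compactness argument in the spirit of Bardos-Lebeau-Rauch: a normalized sequence with vanishing damping integral admits a weak limit $y^\infty$ solving $\Box y^\infty = 0$ with $y^\infty_t \equiv 0$ on $[0,T]\times\omega$, and unique continuation reduces in 1D to a finite Fourier decomposition on $\S$ combined with $T \geq 2\pi$, forcing $y^\infty \equiv 0$ and contradicting the normalization.
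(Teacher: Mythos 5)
The paper does not actually prove Lemma~\ref{lem:lwedecay}; it asserts that the decay rate ``can be calculated directly'' and refers the reader to \cite{Anantharaman-Leautaud-2014}. Your Riemann-invariant setup is a reasonable starting point, but the step you propose for closing the estimate does not close it: you claim an upper bound of the form $\int_\S(p^2+q^2)(T,\cdot)\,dx\le 2E_1(0)-c\int_0^T\!\int_\S a(p+q)^2\,dx\,dt$. Since $p+q=2y_t$, the exact energy identity already gives $E_1(T)=E_1(0)-\tfrac12\int_0^T\!\int_\S a(p+q)^2\,dx\,dt$, so the inequality you claim is either weaker than the identity (hence no new information) or, if the multiple exceeds $1/2$, forces the dissipation term to vanish. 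What is actually needed is the \emph{lower} bound $E_1(0)\le C\int_0^T\!\int_\S a(p+q)^2\,dx\,dt$, equivalently the strict contraction $E_1(T)\le(1-J_T)E_1(0)$, and the Volterra identity plus Cauchy--Schwarz along characteristics goes the wrong way for that: it bounds $p(0,x_0)^2$ by $p(T,\cdot)^2$ plus the dissipation, which integrated over $x_0$ and combined with the energy identity just reproduces a triviality. The cross term you flag is real, but the more basic problem is the direction of the estimate.

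A route that does deliver an effectively computable $J_T$, consistent with the paper's ``direct calculation'' remark, is to treat the damping as a perturbation of the \emph{free} wave. On $\S$, write the free solution $z=f(x+t)+g(x-t)$ with $f,g$ $2\pi$-periodic; expanding $f',g'$ in Fourier modes, integrating in $t$ over a period kills the interaction for every nonzero frequency (because $\int_0^{2\pi}e^{2int}\,dt=0$ for $n\ne0$), and after integrating over $\omega_0$ one obtains an explicit lower bound $\int_0^T\!\int_{\omega_0}z_t^2\ge c(T,l_0)E_1(0)$ whenever $T\ge2\pi$ --- this is exactly where $T\ge2\pi$ enters, as you intended. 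Then decompose the damped solution $y=z+w$ with $\Box w=ay_t$ and $w[0]=0$; Duhamel and the energy inequality give $\int_0^T\!\int_{\omega_0}w_t^2\le T^2\|a\|_{L^\infty}\int_0^T\!\int_\S ay_t^2$, whence
\begin{equation*}
E_1(0)\le c(T,l_0)^{-1}\int_0^T\!\int_{\omega_0}z_t^2\le 2c(T,l_0)^{-1}\bigl(1+T^2\|a\|_{L^\infty}\bigr)\int_0^T\!\int_\S ay_t^2,
\end{equation*}
which is the stated observability with explicit $J_T$. Your compactness/contradiction alternative is valid in principle, but it yields a nonconstructive constant, whereas the lemma (and its use in Proposition~\ref{prop:3}) requires an effectively computable $J_T$.
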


Return to the proof of Proposition~\ref{prop:3}. 
As we know that the damped wave maps equation 
\begin{equation*}
    \begin{cases}
   \Box \phi= \left(|\phi_t|^2- |\phi_x|^2\right)\phi+ a(x) \phi_{t}, \\
   (\phi, \phi_t)(0, x)= (g_0, g_1)(x),
    \end{cases}
\end{equation*}
 admits a unique solution $\phi$. Now let us consider the solution of the linearized damped wave maps equation 
 \begin{equation*}
    \begin{cases}
   \Box \phi_1= a(x) \phi_{1t}, \\
   (\phi_1, \phi_{1t})(0, x)= (g_0, g_1)(x).
    \end{cases}
\end{equation*}
Thanks to Lemma~\ref{lem:lwedecay}, the unique  solution of the linearized wave maps equation  also decays exponentially: in particular, by choosing $T= 16\pi$ the unique  solution $\phi_1$  verifies
\begin{equation*}
    J_{16\pi} E(0)\leq 2\int_0^{16\pi} \int_{\S} a(x) |\phi_{1t}|^2(t, x)\, dx dt. 
\end{equation*}
Thus
\begin{equation}\label{ineq:1}
    \int_{\S} |\phi_{1x}(16\pi, x)|^2+ |\phi_{1t}(16\pi, x)|^2 dx\leq \left(1- J_{16\pi}\right) E(0).
\end{equation}

By considering $\phi_2:= \phi- \phi_1$ we get 
\begin{equation*}
    \begin{cases}
   \Box \phi_2= a(x) \phi_{2t}+ \left(|\phi_t|^2- |\phi_x|^2\right)\phi, \\
   (\phi_2, \phi_{2t})(0, x)= (0, 0).
    \end{cases}
\end{equation*}
Therefore, for any $t\in [0, 16\pi]$, 
\begin{align*}
     \int_{\S} |\phi_{2x}(t, x)|^2+ |\phi_{2t}(t, x)|^2 dx &= -2\int_0^t \int_{\S} \left(|\phi_t|^2- |\phi_x|^2\right)\phi\cdot \phi_{2t}+ a(x)|\phi_{2t}|^2\, dx dt \\
     &\leq 2\int_0^t \int_{\S} \left(|\phi_t|^2- |\phi_x|^2\right)|\phi\cdot \phi_{2t}|\, dx dt \\
    &\leq  2 \|\phi_t\|_{L^{\infty}_t L^2_x(A_t)} \|\phi_t\|_{L^2_t L^{\infty}_x(A_t)} \|\phi_{2t}\|_{L^{2}_t L^2_x(A_t)} \\
    &\;\;\;\;\; + 2 \|\phi_x\|_{L^{\infty}_t L^2_x(A_t)} \|\phi_x\|_{L^2_t L^{\infty}_x(A_t)} \|\phi_{2t}\|_{L^{2}_t L^2_x(A_t)} \\
    &\lesssim E(0) \|\phi_{2t}\|_{L^{\infty}_t L^2_x(A_{16\pi})}
\end{align*}
where $A_t:=\{(s, y): s\in [0, t], y\in \S\}. $ This implies that 
\begin{equation}\label{ineq:2}
     \int_{\S} |\phi_{2x}(t, x)|^2+ |\phi_{2t}(t, x)|^2 dx \leq R_2 (E(0))^2, \; \forall t\in [0, 16\pi].
\end{equation}
Combine estimates \eqref{ineq:1} and \eqref{ineq:2},
\begin{align*}
    E(16\pi)&\leq (1+ J_{16\pi})  \int_{\S} |\phi_{1x}(16\pi, x)|^2+|\phi_{1t}(16\pi, x)|^2 dx\\
    &\;\;\;\;\; + (1+ J_{16\pi}^{-1})    \int_{\S} |\phi_{2x}(16\pi, x)|^2+ |\phi_{2t}(16\pi, x)|^2 dx \\
    &\leq (1- J_{16\pi}^2) E(0)+ (1+ J_{16\pi}^{-1}) R_2 (E(0))^2.
\end{align*}

By choosing $\mu_0$ in such fashion that $2(1+ J_{16\pi}^{-1}) R_2 \mu_0\leq J_{16\pi}^2$,  when $E(0)\leq \mu_0$ there is 
\begin{equation*}
     E(16\pi)\leq (1- J_{16\pi}^2/2) E(0),
\end{equation*}
which is equivalent to 
\begin{equation*}
    J_{16\pi}^2 E(0)\leq 4  \int_0^{16\pi} \int_{\S} a(x) |\phi_{t}|^2(t, x)\, dx dt. 
\end{equation*}
This ends the proof of Proposition~\ref{prop:3}.

\section{Semi-global exact controllability of wave maps}\label{sec:controlstep}

The semi-global controllability is proved  by two steps: we first stabilize the semi-global solution to some state with small energy with the help of damping control, then we  prove  local controllability  of  the controlled wave maps.  In this section we focus on the second step.  More precisely, in Section~\ref{subsec:2waveeque} we recall some controllability results on the related wave equations; then, we make some preparation on the controlled wave maps equation and present our strategy to  local exact controllability in Section~\ref{sec:strlec}; next, Section~\ref{subsec:2prooflocal} is devoted to the proof of local controllability using an iterative construction.

\subsection{Known controllability results on wave equations}\label{subsec:2waveeque}
We first recall the exact controllability result of wave equations in $\S$, which will be used later on to leading to the  local controllability of wave maps. 
\begin{lemma}[Controllability of wave equations]\label{lem:lwcontrol}
For any $T\geq 2\pi$ there exists some $\tilde J_T>0$ effectively computable such that, for any $(y_0, y_1)\in H^1_x\times L^2_x$ and any $(\tilde y_0, \tilde y_1)\in H^1_x\times L^2_x$ there exists some control  $f(t, x)\in L^2_{t, x}$ satisfying 
\begin{equation*}
    \|f\|_{L^{\infty}_t(0, T; L^2_x(\S))}\leq \tilde J_T \left(\|y_0\|_{H^1_x}+ \|y_1\|_{L^2_x}+ \|\tilde y_0\|_{H^1_x}+ \|\tilde y_1\|_{L^2_x}\right),
\end{equation*}
such that the unique solution of 
\begin{equation*}
 \Box y= {\bf 1}_{\omega} f, \; y[0]=(y_0, y_1),
\end{equation*}
verifies $y[T]= (\tilde y_0, \tilde y_1)$.
\end{lemma}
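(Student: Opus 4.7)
The plan is to invoke the Hilbert Uniqueness Method (HUM) of Lions. First, by linearity and time-reversibility of the wave equation, exact controllability between arbitrary states reduces to null controllability: it suffices to construct, for any $(y_0,y_1)\in H^1_x\times L^2_x$, a control $f$ supported in $[0,T]\times \omega$ steering $(y_0,y_1)$ to $(0,0)$, with norm controlled by the initial data. The bi-state problem of the lemma then follows by gluing two such controls: one on $[0,T/2]$ that drives $(y_0,y_1)$ to $(0,0)$, and another on $[T/2,T]$ which, read in reverse time, drives $(\tilde y_0,-\tilde y_1)$ to $(0,0)$. The $L^\infty_tL^2_x$ norms add up with a universal constant.

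Second, HUM reduces null controllability to an observability inequality for the free adjoint wave equation $\Box\phi=0$ with final data $(\phi_0,\phi_1)\in L^2_x\times H^{-1}_x$, namely
$$\|\phi_0\|_{L^2_x}^2+\|\phi_1\|_{H^{-1}_x}^2\leq C_T\int_0^T\!\!\int_\omega|\phi_t|^2\,dx\,dt.$$
Granted this inequality, one minimizes the quadratic functional $J(\phi_0,\phi_1)=\tfrac12\int_0^T\!\!\int_\omega|\phi_t|^2\,dxdt-\langle y_0,\phi_t(0)\rangle+\langle y_1,\phi(0)\rangle$ on $L^2_x\times H^{-1}_x$; coercivity of $J$ is exactly the observability inequality, and the Euler-Lagrange equation for the minimizer $\phi^\ast$ shows that $f:=\mathbf{1}_\omega\phi_t^\ast$ is an admissible null-control. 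Since $\phi^\ast$ solves a free wave equation, $\phi^\ast\in C([0,T];H^1_x)\cap C^1([0,T];L^2_x)$, so $\phi^\ast_t$ lies in $C_tL^2_x$ and hence $f\in L^\infty_tL^2_x$ with the desired bound $\|f\|_{L^\infty_tL^2_x}\les \sqrt{C_T}\,(\|y_0\|_{H^1_x}+\|y_1\|_{L^2_x})$. This gives the $L^\infty_tL^2_x$ estimate essentially for free, a pleasant feature specific to interior controls in $L^2$-duality.

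Third, I would prove the observability inequality on $\mathbb{S}^1$ by exploiting the d'Alembert structure of the 1D free wave equation. Any solution decomposes as $\phi(t,x)=F(x+t)+G(x-t)$ with $F,G$ of period $2\pi$, so $\phi_t=F'-G'$. Since $a(x)\geq 1$ on the subinterval $\omega_0\subset\omega$, performing the null change of variables $(u,v)=(x+t,x-t)$ and using $T\geq 2\pi$ (so that both characteristic families sweep the whole circle while remaining in $\omega_0$ for a nontrivial set of times) produces a coercive lower bound on $\|F'\|_{L^2(\mathbb{S}^1)}^2+\|G'\|_{L^2(\mathbb{S}^1)}^2$, which in turn is equivalent to $\|(\phi_0,\phi_1)\|_{L^2_x\times H^{-1}_x}^2$. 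Alternatively, a Fourier expansion on $\mathbb{S}^1$ combined with an Ingham-type inequality (whose gap condition is satisfied precisely because the eigenvalues $n\in\mathbb{Z}$ are spaced by $1$ and $T\geq 2\pi$) yields the same conclusion quantitatively, giving an effectively computable $C_T$ and hence an effectively computable $\tilde J_T=C\sqrt{C_T}$.

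The main obstacle I anticipate is the degeneracy of $C_T$ as $T\downarrow 2\pi$: at the critical threshold, both the characteristic and the Ingham arguments become marginal, and one must either restrict to $T>2\pi$ or treat the endpoint delicately. Since the lemma only requires $T\geq 2\pi$ with $\tilde J_T$ allowed to depend on $T$, a slight inflation of the time interval (say working on $[0,T+\epsilon]$ then cutting off smoothly in time, at the cost of a multiplicative constant) avoids any difficulty. A secondary technical point is the bookkeeping of the duality $H^1_x\times L^2_x \leftrightarrow L^2_x\times H^{-1}_x$ and the verification that the cutoff by $\mathbf{1}_\omega$ genuinely produces an element of $L^2_{t,x}$, but this is routine given the regularity of $\phi^\ast$.
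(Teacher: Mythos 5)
Your overall strategy---HUM, reduce to an observability inequality for the free wave, obtain the inequality via d'Alembert decomposition or Fourier/Ingham on $\mathbb{S}^1$---is the same as the paper's, which sets up the HUM operators $\mathcal{R}$, $\mathcal{S}$, $\mathcal{T}=\mathcal{R}\circ\mathcal{S}$ and appeals to the fact that in one dimension the observability inequality can be computed directly. However, the specific duality you write down is inconsistent, and this produces a genuine gap rather than a cosmetic one.

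For the state space $H^1_x\times L^2_x$ and interior control $\mathbf{1}_\omega f$ with $f\in L^2_{t,x}$, the correct HUM duality is obtained from the identity
\[
\int_{\S}\big(y_t z - y z_t\big)\Big|_0^T \,dx \;=\; \int_0^T\!\!\int_\omega f\, z\,dx\,dt ,
\]
for $\Box y=\mathbf{1}_\omega f$ and $\Box z=0$. When $y[T]=(0,0)$ this reads $\int y_0\, z_1 - \int y_1\, z_0 = \int_0^T\!\int_\omega f\, z$, so the adjoint data $(z_0,z_1)$ live in $L^2_x\times H^{-1}_x$, the observed quantity is $z|_\omega$ (\emph{not} $z_t|_\omega$), and the observability inequality should read $\|(z_0,z_1)\|_{L^2_x\times H^{-1}_x}^2\lesssim\int_0^T\!\int_\omega|z|^2$. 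You instead observe $\phi_t$ on $\omega$, and this causes three problems. First, for data in $L^2_x\times H^{-1}_x$ one has $\phi_t\in C([0,T];H^{-1}_x)$, so the quantity $\int_0^T\!\int_\omega|\phi_t|^2$ you minimize is not even defined on the space you claim to minimize over. Second, the assertion that the minimizer $\phi^\ast$ lies in $C([0,T];H^1_x)\cap C^1([0,T];L^2_x)$ is unjustified: the free wave equation propagates regularity, it does not improve it, so $\phi^\ast$ inherits only the $C_tL^2_x\cap C^1_tH^{-1}_x$ regularity of its data, and the claimed $L^\infty_tL^2_x$ bound on $f=\mathbf{1}_\omega\phi_t^\ast$ does not follow. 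Third, and most decisively, your observability inequality is simply false on $\mathbb{S}^1$: taking $(\phi_0,\phi_1)=(1,0)$ (which lies in $L^2_x\times H^{-1}_x$) gives the stationary solution $\phi\equiv 1$, so $\int_0^T\!\int_\omega|\phi_t|^2=0$ while the left-hand side is positive. The paper's choice of observing $\phi$ itself resolves all three issues simultaneously: the integral is finite since $\phi\in C_tL^2_x$; the control $f=\mathbf{1}_\omega\phi^\ast$ then automatically belongs to $C([0,T];L^2_x)\subset L^\infty_tL^2_x$ with $\|f\|_{L^\infty_tL^2_x}\lesssim\|(z_0,z_1)\|_{L^2\times H^{-1}}\lesssim\|y[0]\|_{H^1\times L^2}$; and constants are detected because $\phi\equiv 1$ has $\int\!\int_\omega|\phi|^2=T|\omega|>0$. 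Your third step (d'Alembert or Ingham) is otherwise sound and matches the paper's remark, but it must be carried out for the observation of $\phi$, not $\phi_t$. Your reduction from the two-state problem to null controllability by time-reversal and splitting $[0,T]$ is fine.
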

 Usually in the terminology of HUM the desired control candidate is chosen from the  $L^2(0, T; U)$ space (which also determines  the optimal control choice in this space). In the context of wave equations this control term can be expressed by the so-called HUM operator: let us define operators $\mathcal{R}$ and $\mathcal{S}$ as follows, 
\begin{align*}
    \mathcal{R}: L^2([0, T]\times \omega) &\rightarrow H^1\times L^2(\S) \\
    f&\mapsto y[0],
\end{align*}
where 
\begin{equation*}
    \Box y= {\bf 1}_{\omega} f, \; y[T]= (0, 0),
\end{equation*}
and 
\begin{align*}
    \mathcal{S}:  L^2\times H^{-1}(\S) &\rightarrow  L^2([0, T]\times \omega) \\
   (z_0, z_1) &\mapsto {\bf 1}_{\omega} z,
\end{align*}
where 
\begin{equation*}
    \Box z= 0, \; z[0]= (z_0, z_1).
\end{equation*}
Since $\mathcal{T}:= \mathcal{R} \circ \mathcal{S}$ is an isomorphism from $L^2\times H^{-1}(\S)$ to $H^1\times L^2(\S)$, for any $y[0]\in H^1\times L^2(\S)$ the optimal control that steers the state from $y[0]$ to $(0, 0)$ is given  by 
\begin{equation*}
    f:= \mathcal{S}\circ \mathcal{T}^{-1} (y[0]),
\end{equation*}
which also belongs to $L^{\infty}(0, T; L^2(\S))$.  One can refer to \cite[Section 1]{Dehman-Lebeau-2009}, \cite[Section 3.1]{Laurent-2011} for such  duality arguments  and more details on the  controllability of wave equations. Different from multidimensional cases where propagation of singularity techniques are adapted for GCC arguments, {\color{black} the  explicit observability inequality can be obtained via direct calculation on the one dimensional free wave equation.  }

\subsection{Strategy of the  local exact controllability}\label{sec:strlec}
Thanks to the damping stabilization, we  focus on the proof of the low-energy controllability result. We shall call some data $u[0]= (u, u_t)(x)$ to be \textit{$\varepsilon$-concentrated around $p$} provided that 
\begin{equation}
    |u(x)- p|\leq \varepsilon, \; \forall x\in \S.
\end{equation}
In particular, we notice that every data $u[0]= (u(x), u_t(x)): \S\rightarrow \mathbb{S}^k\times T\mathbb{S}^k$ and  $p\in \mathbb{S}^k$ satisfying \begin{equation*}
    \|u[0]- (p, 0)\|_{H^1_x\times L^2_x}\leq \varepsilon
\end{equation*}
is $3 \varepsilon$-concentrated around $p$, which, thanks to the geometry of the sphere, further implies that 
\begin{gather*}
   \left| \left(u_x(x)\right)^{p}\right|= |\langle u_x, p\rangle p|= |\langle u_x, p- u\rangle|\leq 3\varepsilon |u_x(x)|,\\
    \left| \left(u_t(x)\right)^{p}\right|\leq 3\varepsilon |u_t(x)| \; \textrm{ and } \;  \left| \left(u(x)- p\right)^{p}\right|\leq \frac{3\varepsilon}{2} \left|u(x)- p\right|.
\end{gather*}
Hence
\begin{equation*}
      \|\left(u[0]- (p, 0)\right)^p\|_{H^1_x\times L^2_x}\leq 3\varepsilon  \|u[0]- (p, 0)\|_{H^1_x\times L^2_x}
\end{equation*}

Recall that by $f^{p^{\perp}}$ we refer to the orthogonal projection of $f$ onto the plane $p^{\perp}$.
Let us start by  presenting the following well-posedness results:
\begin{lemma}[The inhomogeneous wave maps equation]\label{lem:inhwm}
Let $T= 2\pi$. For any initial sate  $\phi[0]: \S\rightarrow \mathbb{S}^k\times T\mathbb{S}^k$ in $H^1_x\times L^2_x$ and any source term $f(t, x): \S\rightarrow \mathbb{R}^{k+1}$ in $L^2_t L^2_x([0, T]\times \S)$ the inhomogeneous wave maps equation 
\begin{equation}\label{eq:inhomowavemap}
    \Box \phi= \left(|\phi_{t}|^2- |\phi_{x}|^2\right) \phi+ \mathbf{1}_{\omega} f^{\phi^{\perp}}
\end{equation}
admits a unique solution $\phi[t]$. Moreover, there exists some effectively computable constant $C_w>0$ such that  this unique solutions verifies the energy estimates
\begin{gather*}
     \|\phi[t]\|_{\dot{H}^1_x\times L^2_x}\leq \|\phi[0]\|_{\dot{H}^1_x\times L^2_x}+ \|f\|_{L^1_t L^2_x([0, T]\times \S)}, \; \forall t\in [0, T],\\
     \|(\phi_x, \phi_t)\|_{L^2_t\times L^{\infty}_x([0, T]\times \S)}\leq C_w \left(\|\phi[0]\|_{\dot{H}^1_x\times L^2_x}+ \|f\|_{L^1_t L^2_x([0, T]\times \S)}+ \|f\|_{L^2_t L^1_x([0, T]\times \S)}\right),
\end{gather*}
as well as  
\begin{gather*}
     (\phi, \phi_t)(t, x)\in \mathbb{S}^k\times T\mathbb{S}^k, \\
        |\phi(t, x)- p|\leq C_w \left(\|\phi[0]- (p, 0)\|_{H^1_x\times L^2_x}+ \|f\|_{L^1_t L^2_x([0, T]\times \S)}\right), \forall p\in \S,
\end{gather*}
for any $ (t, x)\in [0, T]\times \S$.
\end{lemma}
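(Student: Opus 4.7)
The plan is to combine a Picard iteration in energy space with the null-coordinate identities already established in Section~\ref{subsec:prel} to simultaneously obtain well-posedness, the sphere constraint, and the stated quantitative estimates. In null coordinates $(u,v)=(x+t,t-x)$ the equation reads $-\phi_{uv}=(\phi_u\cdot\phi_v)\phi+\tfrac{1}{4}\mathbf{1}_\omega f^{\phi^\perp}$, and I would run the standard contraction argument in the complete metric space
\[
X:= C_t([0,T];H^1_x\times L^2_x)\cap\{\phi_u\in L^2_uL^\infty_v,\ \phi_v\in L^2_vL^\infty_u\},
\]
using the Duhamel formulas of Section~\ref{subsec:prel}. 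Two points need attention: the projection $f\mapsto f^{\phi^\perp}$ depends Lipschitz-continuously on the iterate as long as $|\phi|$ stays bounded (which is enforced by the embedding $H^1_x\hookrightarrow L^\infty_x$), and the nonlinearity $(\phi_u\cdot\phi_v)\phi$ is controlled in $L^1_{u,v}$ by the square of the $X$-norm. At $T=2\pi$ the smallness needed to close the iteration is provided once one has the a~priori energy bound, which I prove next and bootstrap.

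For the geometric constraint, set $y:=|\phi|^2$. Since $f^{\phi^\perp}\cdot\phi=0$ by construction, one gets
\[
\Box y=2\phi\cdot\Box\phi-2(|\phi_t|^2-|\phi_x|^2)=2(|\phi_t|^2-|\phi_x|^2)(y-1),
\]
with $y(0,x)=1$ and $y_t(0,x)=2\phi(0,x)\cdot\phi_t(0,x)=0$ (the latter because $\phi_t(0)\in T_{\phi(0)}\mathbb{S}^k$). Uniqueness for this linear wave equation in $y-1$ forces $y\equiv 1$, hence $\phi(t,x)\in\mathbb{S}^k$ and consequently $\phi\cdot\phi_t\equiv 0$, i.e.\ $\phi_t\in T_\phi\mathbb{S}^k$.

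Armed with $\phi_t\perp\phi$ the energy estimate is immediate: multiplying the equation by $\phi_t$ and integrating, the nonlinear term $(|\phi_t|^2-|\phi_x|^2)(\phi\cdot\phi_t)$ drops out, and $\phi_t\cdot f^{\phi^\perp}=\phi_t\cdot f$, so $\tfrac{d}{dt}E(t)\le 2\|f(t)\|_{L^2_x}\sqrt{E(t)}$, integrating to $\sqrt{E(t)}\le\sqrt{E(0)}+\|f\|_{L^1_tL^2_x}$. The mixed-norm bound then follows by inserting this energy control into the null-coordinate identity
\[
\phi_u(u,v)=\phi_u(u,-u)-\int_{-u}^v\big((\phi_u\cdot\phi_v)\phi+\tfrac{1}{4}\mathbf{1}_\omega f^{\phi^\perp}\big)(u,v_0)\,dv_0,
\]
and symmetrically for $\phi_v$: taking $L^2_uL^\infty_v$ and $L^2_vL^\infty_u$, using $\|\phi_u\cdot\phi_v\|_{L^1_{u,v}}\lesssim E$, and translating back to $(t,x)$ coordinates yields the claimed $L^2_tL^\infty_x$ control with the $L^1_tL^2_x+L^2_tL^1_x$ input of $f$.

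Finally, for the pointwise concentration bound I set $w:=\phi-p$, which satisfies the same wave equation. Running the energy argument now for $\|w(t)\|_{H^1_x}^2+\|w_t(t)\|_{L^2_x}^2$, the only new term compared with the above is $\|w(t)\|_{L^2_x}^2$, which is handled by Gronwall, giving $\|w(t)\|_{H^1_x}+\|w_t(t)\|_{L^2_x}\lesssim\|\phi[0]-(p,0)\|_{H^1\times L^2}+\|f\|_{L^1_tL^2_x}$. The one-dimensional Sobolev embedding $H^1_x(\mathbb{S}^1)\hookrightarrow L^\infty_x(\mathbb{S}^1)$ then turns this into the pointwise statement. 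The main obstacle is the contraction step: one must verify Lipschitz dependence of the nonlinear map (including $f^{\phi^\perp}$) on the iterate in the norm of $X$, which forces the initial energy and source to be small over a short time window, after which the a~priori energy bound allows extension to $[0,2\pi]$ without loss.
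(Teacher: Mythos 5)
Your overall plan matches the paper's on the two structural steps: propagating the sphere constraint by showing $y=|\phi|^2$ solves a homogeneous linear wave equation with data $(1,0)$, and deriving the energy identity from the orthogonalities $\phi\cdot\phi_t=0$ and $\phi_t\cdot f^{\phi^\perp}=\phi_t\cdot f$. Your Gronwall argument on $\|\phi-p\|_{H^1_x\times L^2_x}$ for the pointwise concentration bound is a clean and perfectly valid alternative to the paper's route (which averages $\phi(t,\cdot)-\phi(0,\cdot)$ over $\S$ and uses the $1$D Poincar\'e inequality). The Picard sketch for existence and uniqueness, which the paper leaves implicit, is reasonable, with the caveat you already note about keeping $|\phi|$ bounded away from zero along the iterates.

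Where your proposal has a real gap is the mixed-norm estimate. The paper derives it via the ``vertical-energy'' propagation argument of the proof of \eqref{es:e3}: extend $f$ by zero outside $[0,T]$, choose a base point $x_0$ so that $\|(\phi_t,\phi_x)(\cdot,x_0)\|_{L^2_t}$ is controlled by $\sqrt{E(0)}+\|f\|_{L^1_tL^2_x}$, and then transport the $L^2_t$ density in $x$. You instead follow the characteristic-energy estimate \eqref{es:e4}, but the way you close it does not work: invoking $\|\phi_u\cdot\phi_v\|_{L^1_{u,v}}\lesssim E$ does not dominate the $L^2_u L^1_v$ quantity you need to control $\int_{-u}^{v}(\phi_u\cdot\phi_v)\phi\,dv_0$ in $L^2_u L^\infty_v$, and the bilinear estimate one would write instead, $\|\phi_u\|_{L^2_uL^\infty_v}\|\phi_v\|_{L^2_vL^\infty_u}$, is quadratic in the very unknown you are bounding and does not close without smallness. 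The mechanism that actually closes \eqref{es:e4} in Section~\ref{sec-basices} is the orthogonality $\phi_u\cdot\phi=0$: writing $\partial_v|\phi_u|^2=2\phi_u\cdot\phi_{uv}$, the nonlinear contribution $(\phi_u\cdot\phi_v)(\phi_u\cdot\phi)$ vanishes identically, so only the source survives, $\sup_v|\phi_u(u,v)|\le|\phi_u(u,-u)|+\tfrac{1}{4}\int|f|\,dv_0$, and the $L^2_u$ norm closes linearly. You should make that cancellation explicit; as written the bootstrap is open. Finally, a notational caution shared with the lemma's own statement: both the vertical-energy and the characteristic-energy arguments produce the $L^\infty_x L^2_t$ norm of $(\phi_t,\phi_x)$ (a $\sup_x$ of $L^2_t$), not a genuine $L^2_t L^\infty_x$ estimate, which already fails for the free $1$D wave; the paper's ``$L^2_t\times L^\infty_x$'' must be read as the former, and you should not promote it to the stronger iterated norm in ``translating back to $(t,x)$ coordinates''.
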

The energy estimates are similar to those of the damped wave maps (see Section~\ref{sec-basices} in particular the proof of \eqref{es:e3} for details). The first inequality on $L^{\infty}_t L^2_x$-norm of $(\phi_x, \phi_t)$ comes from  time derivation of the energy; then we extend $f(t)$ by 0 on $[0, T]^{c}$, which allows us to find some $x_0\in \S$ such that $\|(\phi_x, \phi_t)(t, x_0)\|_{L^2_t(-2\pi, T+ 2\pi)}$ is linearly bounded by $\sqrt{E(0)}$ and $\|f\|_{L^1_t L^2_x}$, thus 
\begin{equation*}
    \|(\phi_x, \phi_t)(t, x_0+y)\|_{L^2_t(-2\pi+y, T+ 2\pi-y)}- \|(\phi_x, \phi_t)(t, x_0)\|_{L^2_t(-2\pi, T+ 2\pi)}, \; \forall y\in [0, 2\pi],
\end{equation*}
is controlled by
$\|f\|_{L^1_x L^2_t}$.
Finally, we  comment on the  last estimate concerning $\varepsilon$-concentration: it is a direct consequence of  
\begin{gather*}
    \left|\int_{\S} \phi(t, x)- \phi(0, x)\, dx\right|=   \left|\int_{\S} \int_0^t \phi_t(s, x) ds dx\right|  \lesssim  \int_0^t\|\phi_t(s, x)\|_{L^2_x} ds, \\
     \left|\int_{\S} \phi(0, x)- p\, dx\right|\lesssim \|\phi(0, x)- p\|_{L^2_x},
\end{gather*}
and 
\begin{equation*}
    |\phi(t, x)- \phi(t, y)|\lesssim \|\phi_x(t, x)\|_{L^2_x}.
\end{equation*}

\begin{lemma}[The controlled wave equation]\label{lem:conlwm}
Let $T= 2\pi$.
For any source term $f(t, x): \S\rightarrow \mathbb{R}^{k+1}$ in $L^2_t L^2_x([0, T]\times \S)$, the unique solution of  the inhomogeneous wave equation 
\begin{equation}\label{eq:controlledlwe}
    \Box \phi=  f, \; \phi[0]= (0, 0),
\end{equation}
 verifies 
\begin{gather*}
     \|\phi[t]\|_{\dot{H}^1_x\times L^2_x}\leq \|f\|_{L^1_t L^2_x([0, T]\times \S)}, \; \forall t\in [0, T],\\
     \|(\phi_x, \phi_t)\|_{L^2_t\times L^{\infty}_x([0, T]\times \S)}\leq C_w \left(\|f\|_{L^1_t L^2_x([0, T]\times \S)}+ \|f\|_{L^2_t L^1_x([0, T]\times \S)}\right), \\
     \|\phi\|_{L^{\infty}_{t, x}([0, T]\times \S)}\leq C_w \left( \|f\|_{L^1_t L^2_x([0, T]\times \S)}\right).
\end{gather*}

There exists some effectively computable $G_T>0$ such that,  for any target state $(u_1, v_1)(x)\in H^1_x\times L^2_x(\S)$ one can find some explicit control function $\mathbf{1}_{\omega} f(t, x)$ satisfying 
\begin{equation}
    \|f\|_{L^{\infty}_t L^2_x([0, T]\times \S)}\leq G_T \|(u_1, v_1)\|_{H^1_x\times L^2_x},
\end{equation}
such that the unique solution of \eqref{eq:controlledlwe} with control $\mathbf{1}_{\omega} f$  verifies $\phi[T]= (u_1, v_1)$.
\end{lemma}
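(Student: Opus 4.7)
The first energy inequality is the classical multiplier identity: pairing $\Box\phi = f$ with $\phi_t$ and integrating on $\S$ yields
\begin{equation*}
\tfrac{d}{dt}\|\phi[t]\|_{\dot H^1_x\times L^2_x}^2 = 2\int_{\S} f\cdot \phi_t\, dx \leq 2\|f(t)\|_{L^2_x}\,\|\phi[t]\|_{\dot H^1_x\times L^2_x},
\end{equation*}
whose square-root form integrates, using $\phi[0]=0$, to $\|\phi[t]\|_{\dot H^1\times L^2} \leq \|f\|_{L^1_t L^2_x}$. For the $L^\infty_{t,x}$ bound I would split $\phi(t,x) = \bar\phi(t) + (\phi - \bar\phi)(t,x)$, where the spatial mean $\bar\phi(t) = (2\pi)^{-1}\int_{\S}\phi(t,x)\,dx$ satisfies the ODE $\ddot{\bar\phi}(t) = \bar f(t)$ with vanishing initial data, hence $|\bar\phi(t)| \lesssim T\|f\|_{L^1_t L^2_x}$, while the zero-mean Poincar\'e inequality gives $\|(\phi - \bar\phi)(t)\|_{L^\infty_x} \lesssim \|\phi_x(t)\|_{L^2_x}$, which is already controlled by the previous estimate.

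For the trace bound on $(\phi_x,\phi_t)$ (reading the notation as $L^\infty_x L^2_t$, in line with the reference to \eqref{es:e3}), I would replicate the vertical-energy argument from Section~\ref{sec-basices}. After extending $f$ by zero outside $[0,T]$, the $L^2_{t,x}$ estimate on $(\phi_t,\phi_x)$ furnishes some $\bar x \in \S$ with $\|(\phi_t,\phi_x)(\cdot,\bar x)\|_{L^2_t}$ controlled by $\|f\|_{L^1_t L^2_x}$. Differentiating
\begin{equation*}
\tilde E(y) := \int (|\phi_t|^2 + |\phi_x|^2)(t,\bar x + y)\,dt
\end{equation*}
in $y$, substituting $\phi_{xx} = \phi_{tt} - f$ from the equation and integrating by parts in $t$ produces, besides the expected $\tilde E(y)$ term, only a source contribution whose size is controlled by $\|f\|_{L^2_t L^1_x}$ (or by the larger norm $\|f\|_{L^1_x L^2_t}$) via H\"older; Gronwall in $y$ then delivers the uniform-in-$x$ bound.

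For the controllability statement, I would apply Lemma~\ref{lem:lwcontrol} componentwise to the $\R^{k+1}$-valued target: for each coordinate $i$, a time reversal identifies the task ``steer $(0,0)$ to $(u_1^i,v_1^i)$'' with ``steer $(u_1^i,-v_1^i)$ to $(0,0)$'', which is handled by the HUM representation $f^i = \mathcal{S}\circ\mathcal{T}^{-1}(u_1^i,-v_1^i)$ recalled in Section~\ref{subsec:2waveeque}. The operator $\mathcal{T}^{-1}:H^1_x\times L^2_x \to L^2_x\times H^{-1}_x$ is a bounded isomorphism, with norm controlled by the observability constant $\tilde J_T$ of Lemma~\ref{lem:lwcontrol}, and $\mathcal{S}$ records the restriction to $\omega$ of a free-wave evolution of $L^2_x\times H^{-1}_x$ data, which belongs to $C([0,T];L^2_x)$ by energy conservation at that regularity. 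This yields the stated $L^\infty_t L^2_x$ bound with $G_T$ depending only on $T$ and $\tilde J_T$; reversing time in the resulting wave solution recovers the desired control $\mathbf{1}_\omega f$ for the original problem. No nonlinear obstacle appears at this stage because the underlying equation is the free linear wave equation, which is precisely what makes this the ``linear building block'' for the local control of wave maps in the next section.
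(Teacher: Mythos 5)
Your proof is correct and consistent with the paper's framework. The paper does not spell out a proof of this lemma; the surrounding discussion (the HUM operators $\mathcal{R},\mathcal{S},\mathcal{T}$ after Lemma~\ref{lem:lwcontrol}, and the proof sketch of Lemma~\ref{lem:inhwm}) makes clear that the intended argument is exactly what you wrote: multiplier energy identity for the first bound, vertical-energy propagation in $x$ for the second, Sobolev embedding for the $L^\infty_{t,x}$ bound, and time-reversal plus the HUM representation $f = \mathcal{S}\circ\mathcal{T}^{-1}$ (applied componentwise) for the controllability part with the $L^\infty_t L^2_x$ regularity of the control coming from the fact that a free wave with $L^2_x\times H^{-1}_x$ data lies in $C_t L^2_x$.

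Two small remarks. First, your reading of the norm $L^2_t\times L^\infty_x$ as $L^\infty_x L^2_t$ is the right call: the paper's proof sketch of Lemma~\ref{lem:inhwm} propagates $\|(\phi_x,\phi_t)(\cdot,x)\|_{L^2_t}$ in $x$, which only produces the mixed norm in that order, and for the $1+1$-dimensional free wave the other order $L^2_t L^\infty_x$ is not accessible at $H^1\times L^2$ regularity. Second, you note that the source contribution in the Gronwall step is bounded by $\|f\|_{L^1_x L^2_t}$, which by Minkowski's integral inequality dominates the $\|f\|_{L^2_t L^1_x}$ appearing in the stated estimate; in fact, a direct Duhamel representation $\phi_x(t,x) = \tfrac12\int_0^t \big(f(s,x+t-s)-f(s,x-t+s)\big)\,ds$ together with Minkowski in $s$ yields $\|\phi_x\|_{L^\infty_x L^2_t}\lesssim \|f\|_{L^1_t L^2_x}$ alone, so the $L^2_t L^1_x$ term is included in the statement only for parallelism with the nonlinear analogue Lemma~\ref{lem:inhwm}. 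Neither point affects correctness.
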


Now we are in position to state the local null controllability of the controlled wave maps equation: 
\begin{theorem}\label{thm:localcontrolwm}
Let $T= 2\pi$. There exist some effectively computable $\tilde \varepsilon>0$ such that for any initial state $u[0]= (a, b): \S\rightarrow \mathbb{S}^k\times T\mathbb{S}^k$ verifying 
\begin{equation*}
       \|(a, b)- (p, 0)\|_{H^1_x\times L^2_x}= \varepsilon\leq \tilde \varepsilon,
\end{equation*}
for some $p\in \mathbb{S}^k$,   we are able to construct a control $f$ satisfying 
\begin{equation*}
     \|f\|_{L^{\infty}_t L^2_x([0, T]\times \S)}\leq 50 G_T \varepsilon,
\end{equation*}
such that the unique solution of the inhomogeneous wave maps equation 
\begin{gather*}
     \Box \phi= \left(|\phi_{t}|^2- |\phi_{ x}|^2\right) \phi+ \mathbf{1}_{\omega} f^{\phi^{\perp}}, \; \phi[0]= (a, b),
\end{gather*}
 verifies 
 \begin{gather*}
 \phi[T]= (p, 0).
 \end{gather*}
\end{theorem}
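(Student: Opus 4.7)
The plan is to construct $f$ as the limit of an iterative linearization scheme in which each step is resolved by the linear wave controllability, Lemma~\ref{lem:conlwm}. Writing $\phi = p + w$, the initial data become $w[0] = g := (a-p, b)$ with $\|g\|_{H^1_x\times L^2_x} \le \varepsilon$, and the target becomes $w[T] = (0,0)$. Using the identity $f^{\phi^\perp} = f - \langle f, \phi\rangle \phi$, the controlled wave maps equation rewrites as
\begin{equation*}
\Box w = \mathcal{N}(w, f) + \mathbf{1}_\omega f, \qquad \mathcal{N}(w, f) := (|\phi_t|^2 - |\phi_x|^2)\phi - \mathbf{1}_\omega \langle f, \phi\rangle \phi,
\end{equation*}
with $\phi = p+w$. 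I would restrict attention to controls $f$ taking values in the hyperplane $p^\perp$ at every $(t,x)$; then $\langle f, \phi\rangle = \langle f, w\rangle$ is already of size $O(\varepsilon)\cdot |f|$ and $\mathcal{N}$ becomes bilinear in the small quantities.

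Define iterates $(w_n, f_n)$ by $(w_0, f_0) = (0,0)$ and, inductively, obtain $(w_{n+1}, f_{n+1})$ by splitting $w_{n+1} = z^h + z^c$ with
\begin{equation*}
\Box z^h = \mathcal{N}(w_n, f_n), \quad z^h[0] = g, \qquad \Box z^c = \mathbf{1}_\omega f_{n+1}, \quad z^c[0] = (0,0),
\end{equation*}
and choosing $f_{n+1} \in p^\perp$ via Lemma~\ref{lem:conlwm} so that $z^c[T] = -z^h[T]$; then $w_{n+1}[T] = (0,0)$ automatically. The energy estimates collected in Lemma~\ref{lem:inhwm} bound both $\|(\partial_t w_n, \partial_x w_n)\|_{L^\infty_t L^2_x}$ and $\|(\partial_t w_n, \partial_x w_n)\|_{L^2_t L^\infty_x}$ linearly in $\varepsilon + \|\mathcal{N}(w_{n-1}, f_{n-1})\|_{L^1_t L^2_x} + \|f_n\|_{L^1_t L^2_x}$, while the bilinear form of $\mathcal{N}$ yields
\begin{equation*}
\|\mathcal{N}(w_n, f_n)\|_{L^1_t L^2_x} \lesssim \|(\partial_t w_n, \partial_x w_n)\|_{L^\infty_t L^2_x}\|(\partial_t w_n, \partial_x w_n)\|_{L^2_t L^\infty_x} + \|w_n\|_{L^\infty_{t,x}}\|f_n\|_{L^1_t L^2_x}.
\end{equation*}
An induction then closes for $\tilde\varepsilon$ small enough, since each quadratic source contributes an extra factor of $\varepsilon$; in particular a uniform bound $\|f_n\|_{L^\infty_t L^2_x} \le 50 G_T\varepsilon$ is reached with a large margin. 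A parallel estimate applied to the differences $(w_{n+1}-w_n, f_{n+1}-f_n)$, once more gaining a factor $\varepsilon$, renders the sequence Cauchy.

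Passing to the limit produces $(w, f)$ solving $\Box w = \mathcal{N}(w,f) + \mathbf{1}_\omega f$, $w[0]=g$, $w[T]=(0,0)$; unfolding $\mathcal{N}$ yields precisely the controlled wave maps equation $\Box \phi = (|\phi_t|^2 - |\phi_x|^2)\phi + \mathbf{1}_\omega f^{\phi^\perp}$ with $\phi[0]=(a,b)$, $\phi[T]=(p,0)$, and the bound $\|f\|_{L^\infty_t L^2_x}\le 50 G_T \varepsilon$ is inherited.

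The main obstacle is the geometric sphere constraint: the iterates $\phi_n := p + w_n$ need not satisfy $|\phi_n|=1$, since the scheme is purely analytic and forgets the geometry. This is rescued only at the limit, by the calculation recalled immediately after \eqref{eq:inhomowavemaps}: once $\phi$ solves the controlled wave maps equation with tangential forcing $\mathbf{1}_\omega f^{\phi^\perp}$ and Cauchy data on $\mathbb{S}^k\times T\mathbb{S}^k$, the scalar $u := |\phi|^2 - 1$ satisfies the linear wave equation $\Box u = 2(|\phi_t|^2 - |\phi_x|^2)\, u$ with vanishing Cauchy data, forcing $u \equiv 0$. Hence the limiting $\phi$ automatically lives on $\mathbb{S}^k$ and, by the uniqueness part of Lemma~\ref{lem:inhwm}, is the desired controlled solution.
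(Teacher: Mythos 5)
There is a genuine gap in the iteration step, and it is precisely the point that the paper's own proof is structured to avoid.

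\textbf{The gap.} You require $f_{n+1}$ to take values in $p^\perp$ and then claim the controlled piece $z^c$ can be chosen with $z^c[T]=-z^h[T]$, ``so that $w_{n+1}[T]=(0,0)$ automatically.'' This is impossible in general. Since the scalar wave equation is component-wise, if $f_{n+1}\in p^\perp$ and $z^c[0]=(0,0)$, then $\langle z^c(t,x),p\rangle$ solves the free wave equation with vanishing Cauchy data and vanishing source, hence $z^c(t,x)\in p^\perp$ for all $(t,x)$. Thus $z^c[T]=-z^h[T]$ would require $z^h[T]\in p^\perp$, but this fails already at the first iterate: $z^h$ carries the Cauchy data $g=(a-p,b)$, and since $|a|=|p|=1$ we have $\langle a-p,p\rangle=-\tfrac12|a-p|^2$ while $\langle b,p\rangle=\langle b,p-a\rangle$, both of size $O(\varepsilon^2)$ but not zero. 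The source $\mathcal N(w_n,f_n)$ likewise has a nonzero $p$-component of size $O(\varepsilon^2)$. So $z^h[T]$ has a persistent $p$-component of size $O(\varepsilon^2)$ that a control valued in $p^\perp$ can never remove. Dropping the constraint $f_{n+1}\in p^\perp$ does not help either: the term $\mathbf 1_\omega\langle f,p\rangle p$ in $\mathcal N$ is then linear in $f$ and not small, and if you regroup so that the linear control is $\mathbf 1_\omega f^{p^\perp}$ you are back to the same obstruction. The upshot is that your iterates satisfy $w_{n+1}[T]^{p^\perp}=0$ at best, not $w_{n+1}[T]=(0,0)$, and the scheme as written does not define a sequence converging to a solution with $\phi[T]=(p,0)$.

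\textbf{How the paper handles this, and how yours would need to be repaired.} The paper's iterates $\phi_k$ are exact solutions of the nonlinear wave maps equation at each stage, so they live on $\mathbb S^k$. This is used in an essential way: for $\phi_k(T,\cdot)\in\mathbb S^k$ close to $p$ one has $\langle\phi_k(T,x)-p,p\rangle=-\tfrac12|\phi_k(T,x)-p|^2$, giving the key structural bound $\|(\phi_k[T]-(p,0))^p\|\le 9\varepsilon\,\|\phi_k[T]-(p,0)\|$. The paper's control $h_k^{p^\perp}$ removes the tangential part of the residual exactly, and the surviving $p$-component is \emph{quadratically} smaller than the residual; this is the engine of the geometric contraction, together with a correction $w_k$ that restores exactness of the nonlinear equation. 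Your scheme discards this geometry during the iteration (as you note, $\phi_n=p+w_n$ need not lie on $\mathbb S^k$), so the $p$-component of the residual does not shrink from one iterate to the next. Your proposal could in principle be salvaged: correct the target to $z^c[T]=-z^h[T]^{p^\perp}$, run the contraction on the differences (which does close since it does not use $w_{n}[T]=0$), pass to the limit to get $w[T]^{p^\perp}=0$, and then use the sphere constraint at the limit to argue $w(T,x)\in\operatorname{span}(p)\cap(\mathbb S^k-p)=\{0\}$ and, using $\phi_t(T,x)\in T_p\mathbb S^k=p^\perp$ together with $w_t[T]^{p^\perp}=0$, conclude $w_t(T,x)=0$. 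But that last step is an additional argument you do not make; your invocation of the sphere constraint only establishes $|\phi|\equiv1$, not $\phi[T]=(p,0)$. As it stands the proof is incomplete at the exact place where the paper's use of the sphere geometry \emph{inside} the iteration is doing the work.
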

Thanks to the time reversal of the controlled wave maps equation, Theorem~\ref{thm:localcontrolwm} implies  the local exact controllabilty of the controlled wave maps.  becasue $\mathbb{S}^k$ is compact and connected, there exists some integer $N$ such that  for any $p, q\in \mathbb{S}^k$ we are able to select a sequence  $\{p_i\}_{i=0}^N\subset \mathbb{S}^k$ satisfying $(p_0, p_N)= (p, q)$ in such fashion that 
\begin{equation*}
    |p_i- p_{i+1}|\leq \tilde \varepsilon. \; \forall i\in \{0, 1,..., N-1\}.
\end{equation*}
Further notice that we can construct a control to move the state from $(p_k, 0)$to $(p_{k+1}, 0)$ according to Theorem \ref{thm:localcontrolwm},   we arrive at the following detailed version of Theorem~\ref{thm:smallcontrolwm} concerning low-energy exact controllability of wave maps.
\begin{cor}\label{thm:localexactcontrolwm}
Let $T= 2\pi (N+ 1)$.  For any states $u[0], u[T]: \S\rightarrow \mathbb{S}^k\times T\mathbb{S}^k$ in $H^1_x\times L^2_x$ verifying 
\begin{equation*}
    \|u[0]\|_{\dot{H}^1_x\times L^2_x}, \|u[T]\|_{\dot{H}^1_x\times L^2_x}\leq \frac{(\tilde \varepsilon)^2}{100},
\end{equation*}
we are able to construct a control $f(t, x)$ satisfying 
\begin{equation*}
     \|f\|_{L^{\infty}_t L^2_x([0, T]\times \S)}\leq 5000G_{2\pi} \left(\|u[0]\|_{\dot{H}^1_x\times L^2_x}+ \|u[T]\|_{\dot{H}^1_x\times L^2_x}\right) ,
\end{equation*}
such that the unique solution of the inhomogeneous wave maps equation 
\begin{gather*}
     \Box \phi= \left(|\phi_{t}|^2- |\phi_{ x}|^2\right) \phi+ \mathbf{1}_{\omega} f^{\phi^{\perp}}, \; \phi[0]= u[0],
\end{gather*}
 verifies 
 \begin{gather*}
 \phi[T]= u[T].
 \end{gather*}
\end{cor}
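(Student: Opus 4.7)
The plan is to combine Theorem~\ref{thm:localcontrolwm} — small-data local controllability around any equilibrium $(p,0)$ — with a compactness/chain argument on the target sphere $\mathbb{S}^k$: stitch together a sequence of local transitions so that the concatenated control runs $u[0]\to(p_0,0)\to(p_1,0)\to\cdots\to(p_N,0)\to u[T]$, where the intermediate $p_i$ form a discrete path on $\mathbb{S}^k$ joining the natural ``basepoints'' of $u[0]$ and $u[T]$.

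First I would reduce each endpoint to a constant equilibrium. Set $p_0:=u(0,0)\in\mathbb{S}^k$ and $p_N:=u(T,0)\in\mathbb{S}^k$. Poincaré on $\S$ gives
\begin{equation*}
\|u(0,\cdot)-p_0\|_{L^2_x}\;\leq\;\sqrt{2\pi}\,\|u_x(0,\cdot)\|_{L^2_x},
\end{equation*}
so $\|u[0]-(p_0,0)\|_{H^1_x\times L^2_x}\lesssim\|u[0]\|_{\dot H^1_x\times L^2_x}\leq\tilde\varepsilon^2/100\leq\tilde\varepsilon$, and analogously for $u[T]$ around $p_N$. Theorem~\ref{thm:localcontrolwm} then provides a control on $[0,2\pi]$ steering $u[0]\to(p_0,0)$ with $L^\infty_t L^2_x$-norm $\lesssim G_{2\pi}\|u[0]\|_{\dot H^1\times L^2}$. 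By the $t\mapsto T-t$ symmetry of \eqref{eq:inhomowavemap} (which preserves the sphere-valued structure and commutes with projection onto $\phi^\perp$), running the same theorem in reverse time also yields a control on $[T-2\pi,T]$ steering $(p_N,0)\to u[T]$, bounded by $\lesssim G_{2\pi}\|u[T]\|_{\dot H^1\times L^2}$.

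To bridge $p_0$ and $p_N$ I would use that $\mathbb{S}^k$ is compact and connected: there exists an integer $N$ depending only on $k$ and $\tilde\varepsilon$ such that any two points admit a chain $p_0,p_1,\ldots,p_N$ with $\sqrt{2\pi}\,|p_i-p_{i+1}|\leq\tilde\varepsilon$, i.e.\ $\|(p_i,0)-(p_{i+1},0)\|_{H^1_x\times L^2_x}\leq\tilde\varepsilon$. A second round of applications of Theorem~\ref{thm:localcontrolwm}, one on each $[2\pi i,2\pi(i+1)]$, steers the successive equilibria $(p_{i-1},0)\to(p_i,0)$. Concatenating all $N+2$ controls produces a single $f\in L^\infty_t L^2_x([0,T]\times\S)$ that drives the wave map exactly through the intermediate equilibria; since each sub-interval lands precisely at the prescribed $(p_i,0)$, the $\mathbb{S}^k$-invariance argument of the introduction propagates through every junction, so $(\phi,\phi_t)(t,\cdot)\in\mathbb{S}^k\times T\mathbb{S}^k$ holds on the whole of $[0,T]$, and the projected control structure $\mathbf{1}_\omega f^{\phi^\perp}$ is respected piecewise.

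The main obstacle is essentially bookkeeping: summing the $N+2$ local bounds gives $\|f\|_{L^\infty_t L^2_x}\leq C(\|u[0]\|_{\dot H^1\times L^2}+\|u[T]\|_{\dot H^1\times L^2})+CN\tilde\varepsilon$, and one must fix $\tilde\varepsilon$ small enough (in dependence only on $\mathbb{S}^k$ and the Poincaré constant on $\S$) so that the chain contribution is absorbed into the advertised $5000\,G_{2\pi}$ prefactor. Beyond this accounting no new analytic difficulty arises on top of Theorem~\ref{thm:localcontrolwm}; in particular, the time-reversibility of the inhomogeneous wave maps equation and the piecewise $L^\infty_t L^2_x$-concatenation of controls are both elementary, and the geometric constraint survives the concatenation because it survives each individual application by the uniqueness/$\mathbb{S}^k$-invariance discussion given after \eqref{eq:inhomowavemaps}.
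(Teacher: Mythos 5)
Your strategy — reduce each endpoint to a nearby constant equilibrium, join $p_0$ to $p_N$ by a $\tilde\varepsilon$-chain on the compact connected sphere, and concatenate applications of Theorem~\ref{thm:localcontrolwm} together with time reversal — is exactly the argument sketched in the paper after Theorem~\ref{thm:localcontrolwm}, and you have fleshed out the endpoint reduction (via the pointwise Poincar\'e-type bound) and the time-reversal leg more explicitly than the paper does.

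However, your final ``absorption'' step does not work as stated, and this is a genuine issue. You correctly note that concatenating the pieces gives a bound of the shape
\begin{equation*}
\|f\|_{L^{\infty}_t L^2_x} \leq C\big(\|u[0]\|_{\dot H^1_x\times L^2_x}+\|u[T]\|_{\dot H^1_x\times L^2_x}\big)+C N \tilde\varepsilon,
\end{equation*}
but the term $C N\tilde\varepsilon$ is an additive constant, not proportional to the endpoint norms, and shrinking $\tilde\varepsilon$ does not help: the chain length $N$ scales like $1/\tilde\varepsilon$ (it must cover the diameter of $\mathbb{S}^k$), so $N\tilde\varepsilon$ stays of order one. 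In fact the stated conclusion $\|f\|_{L^\infty_tL^2_x}\lesssim \|u[0]\|_{\dot H^1_x\times L^2_x}+\|u[T]\|_{\dot H^1_x\times L^2_x}$ cannot hold in the generality of the statement: taking $u[0]=(p,0)$ and $u[T]=(q,0)$ with $p\neq q$ two constant maps, both homogeneous norms vanish, so the bound forces $f\equiv 0$, yet with zero control the solution stays at $p$ and never reaches $q$. The control cost of bridging $p$ to $q$ must depend on $|p-q|$, which is not controlled by the homogeneous energy of the endpoints. To be fair, the paper's own two-line proof of this Corollary glides past the same point; but your attempt to repair it by shrinking $\tilde\varepsilon$ does not succeed, and the issue deserves explicit acknowledgement rather than being described as bookkeeping. (A minor secondary point: your concatenation uses $N+2$ intervals of length $2\pi$, while the statement fixes $T=2\pi(N+1)$; either the chain index should be shifted or $T$ adjusted.)
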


\subsection{Proof of the local null controllability}\label{subsec:2prooflocal}
Now we are in position to prove Theorem~\ref{thm:localcontrolwm} concerning local null controllability. 
We adapt an iteration scheme to construct the required control function: for every $k\in \mathbb{N}$ we construct a pair $(\phi_k, f_{k-1})$ satisfying
\begin{equation}\label{eq:iteform}
    \Box \phi_k= \left(|\phi_{k, t}|^2- |\phi_{k, x}|^2\right) \phi_{k}+ \mathbf{1}_{\omega} f_{k-1}^{\phi^{\perp}_k}, \; \phi_k[0]= (a, b),
\end{equation}
such that $\phi_k[T]$ converges to $(p, 0)$ and that their limit, $(\tilde \phi, \tilde f)$,  is the desired solution:
\begin{gather*}
     \Box \tilde \phi= \left(|\tilde \phi_{ t}|^2- |\tilde \phi_{x}|^2\right) \tilde \phi+ \mathbf{1}_{\omega} \tilde f^{\tilde \phi^{\perp}}, \\
     \tilde \phi[0]= (a, b) \textrm{ and } \tilde \phi[T]= (p, 0).
\end{gather*}

\noindent {\bf Step 0.} In the zeroth iterate, namely $k=0$, we select  $f_{-1}= 0$, thus the system becomes a simple wave maps equation:
\begin{equation*}
    \Box \phi_0= \left(|\phi_{0, t}|^2- |\phi_{0, x}|^2\right) \phi_{0}, \; \phi_{0}[0]= (a, b).
\end{equation*}
Since 
\begin{equation}
    \|(a, b)- (p, 0)\|_{H^1_x\times L^2_x}= \varepsilon\leq \tilde \varepsilon,
\end{equation}
by conservation of the energy 
\begin{equation*}
    \|\phi_0[t]\|_{\dot H^1_x\times L^2_x}\leq \varepsilon, \; \forall t\in [0, T].
\end{equation*}
Furthermore, since
\begin{equation*}
    \left|\int_{\S} \phi_0(t, x)- \phi_0(0, x)\, dx\right|= \left|\int_0^t \int_{\S} \phi_{0, t}(s, x)\, dx dt\right|\leq 6\pi \varepsilon, \; \forall t\in [0, T],
\end{equation*}
we know that for any  $t\in [0, T]$, 
\begin{gather}
   \textrm{ $\phi_0[t]$ is $9\varepsilon$-concentrated around $p$},\\
     \|\phi_0[t]- (p, 0)\|_{H^1_x\times L^2_x}\leq 25\varepsilon, \\
      \|(\phi_{0, x}, \phi_{0, t})\|_{L^2_t\times L^{\infty}_x([0, T]\times \S)}\leq C_w \varepsilon.
\end{gather}
According to the geometry of  sphere we have the important observation that
\begin{equation}
     \|\left(\phi_0[t]- (p, 0)\right)^p\|_{H^1_x\times L^2_x}\leq 9\varepsilon\|\phi_0[t]- (p, 0)\|_{H^1_x\times L^2_x}.
\end{equation}
This finishes the zeroth iterate with error estimate
\begin{equation}
     \|\phi_0[T]- (p, 0)\|_{H^1_x\times L^2_x}\leq 25\varepsilon.
\end{equation}

\noindent {\bf Step 1.} We start the first iterate by fixing the error as much as possible  using some well chosen control $f_0(t, x)$, then construct the related function $\phi_1$.  Thanks to Lemma~\ref{lem:conlwm} concerning the exact controllability of  wave equations, we are able to select $f_0$ in such fashion that the solution of 
\begin{equation*}
    \Box \tilde \varphi_0= \mathbf{1}_{\omega} f_0, \;  \tilde \varphi_0[0]= (0, 0),
\end{equation*}
satisfies
\begin{equation*}
    \tilde \varphi_0[T]= -\phi_0[T]+ (p, 0), 
\end{equation*}
with
\begin{equation}
    \|f_0\|_{L^{\infty}_t L^2_x([0, T]\times \S)}\leq G_T \|\phi_0[T]- (p, 0)\|_{H^1_x\times L^2_x}\leq 25 G_T \varepsilon.
\end{equation}

Then we have that the slightly modified solution of 
\begin{equation*}
    \Box  \varphi_0= \mathbf{1}_{\omega} f_0^{p^{\perp}}, \;   \varphi_0[0]= (0, 0),
\end{equation*}
satisfies
\begin{gather*}
    \varphi_0[T]= -\phi_0[T]+ (p, 0) + \left(\phi_0[T]- (p, 0)\right)^p.
\end{gather*}
We also know from Lemma~\ref{lem:conlwm} that
\begin{gather*}
     \|\varphi_0[t]\|_{\dot{H}^1_x\times L^2_x}\leq 2\pi G_T \|\phi_0[T]- (p, 0)\|_{H^1_x\times L^2_x}\leq  50\pi G_T \varepsilon, \; \forall t\in [0, T],\\
     \|(\varphi_{0, x}, \varphi_{0, t})\|_{L^2_t\times L^{\infty}_x([0, T]\times \S)}\leq 8\pi G_T C_w \|\phi_0[T]- (p, 0)\|_{H^1_x\times L^2_x} \leq 200\pi G_T C_w \varepsilon, \\
     \|\varphi_0\|_{L^{\infty}_{t, x}([0, T]\times \S)}\leq 4\pi G_T C_w \|\phi_0[T]- (p, 0)\|_{H^1_x\times L^2_x} \leq 100\pi G_T C_w \varepsilon.
\end{gather*}

Notice that $\tilde \phi_1:= \phi_0+ \varphi_0$ verifies
\begin{gather*}
   \|\tilde \phi_1[T]- (p, 0)\|_{H^1_x\times L^2_x}= \|\left(\phi_0[T]- (p, 0)\right)^p\|_{H^1_x\times L^2_x}\leq 9 \varepsilon \|\phi_0[T]- (p, 0)\|_{H^1_x\times L^2_x},
\end{gather*}
 as well as  
\begin{gather*}
     \textrm{ $\tilde \phi_1[t]$ is $(9+ 100\pi G_T C_w)\varepsilon$-concentrated around $p$},\\
     \|\tilde \phi_1[t]- (p, 0)\|_{H^1_x\times L^2_x}\lesssim \varepsilon, \\
      \|(\tilde \phi_{1, x}, \tilde \phi_{1, t})\|_{L^2_t\times L^{\infty}_x([0, T]\times \S)}\lesssim \varepsilon.
\end{gather*}
Thus in principle we would like to set $\phi_1$ as $\tilde \phi_1$.
However, this still needs to be modified a bit to guarantee the geometric constraint, thus the form \eqref{eq:iteform}.  Observe from the construction of $\tilde \phi_1$ that 
\begin{equation}
    \Box \tilde \phi_1= \left(|\tilde \phi_{1, t}|^2- |\tilde \phi_{1, x}|^2\right) \tilde \phi_{1}+ \mathbf{1}_{\omega} f_{0}^{\tilde \phi^{\perp}_1}+ e_0, \; \tilde \phi_1[0]= (a, b)
\end{equation}
with 
\begin{align*}
    e_0(t, x)&= \phi_0\left(\langle 2\phi_{0, x}+ \varphi_{0, x}, \varphi_{0, x}\rangle- \langle 2\phi_{0, t}+ \varphi_{0, t}, \varphi_{0, t}\rangle\right) \\
    & \;\;\;\;\;\; - \varphi_0 \left(|\tilde \phi_{1, t}|^2- |\tilde \phi_{1, x}|^2 \right)+ \mathbf{1}_{\omega} \left(f_0^{p^{\perp}}- f_{0}^{\tilde \phi^{\perp}_1} \right).
\end{align*}
Combining the preceding estimates on $\phi_0$ and $\varphi_0$ we know that 
\begin{gather*}
    \| \varphi_0 \left(|\tilde \phi_{1, t}|^2- |\tilde \phi_{1, x}|^2 \right)\|_{L^2_{t, x}([0, T]\times \S)}\lesssim \varepsilon^2 \|\phi_0[T]- (p, 0)\|_{H^1_x\times L^2_x}, \\
     \| \phi_0\left(\langle 2\phi_{0, x}+ \varphi_{0, x}, \varphi_{0, x}\rangle- \langle 2\phi_{0, t}+ \varphi_{0, t}, \varphi_{0, t}\rangle\right)\|_{L^2_{t, x}([0, T]\times \S)}\lesssim \varepsilon \|\phi_0[T]- (p, 0)\|_{H^1_x\times L^2_x}.
\end{gather*}
Moreover, since $|\tilde \phi_1(t, x)- p|\leq C \varepsilon$, there is 
\begin{equation*}
    |g^{p^{\perp}}- g^{\tilde \phi^{\perp}_1}|\lesssim \varepsilon |g|, \; \forall g\in \mathbb{R}^{k+1},
\end{equation*}
which yields 
\begin{equation*}
    \|\mathbf{1}_{\omega} \left(f_0^{p^{\perp}}- f_{0}^{\tilde \phi^{\perp}_1} \right)\|_{L^2_{t, x}([0, T]\times \S)}\lesssim \varepsilon \|f_{0} \|_{L^2_{t, x}([0, T]\times \S)}\lesssim \varepsilon \|\phi_0[T]- (p, 0)\|_{H^1_x\times L^2_x}.
\end{equation*}
Thus
\begin{equation}
    \|e_0\|_{L^2_{t, x}([0, T]\times \S)}\lesssim \varepsilon \|\phi_0[T]- (p, 0)\|_{H^1_x\times L^2_x}.
\end{equation}

In order to eliminate the error  $e_0$ we try to find some suitable correction term $w_0$ such that $\phi_1:=\tilde \phi_1+ w_0= \phi_0+ \varphi_0+ w_0$ satisfies 
\begin{equation}\label{eq:phi1wm}
    \Box  \phi_1= \left(| \phi_{1, t}|^2- | \phi_{1, x}|^2\right)  \phi_{1}+ \mathbf{1}_{\omega} f_{0}^{\ \phi^{\perp}_1}, \;  \phi_1[0]= (a, b).
\end{equation}
According to Lemma~\ref{lem:inhwm} the Cauchy problem \eqref{eq:phi1wm} admits a unique solution satisfying 
\begin{gather*}
     |\phi_1(t, x)- p|\lesssim \varepsilon, \; \forall (t, x)\in [0, T]\times \S,  \\
     \|(\phi_{1, x}, \phi_{1, t})\|_{L^2_t\times L^{\infty}_x\cap L^{\infty}_t\times L^{2}_x([0, T]\times \S)}\lesssim \varepsilon.
\end{gather*}

In order to simplify the notations from now on we shall denote the $\mathcal{W}-$norm of a function $\phi[t]$ by
\begin{equation}
    \|\phi\|_{\mathcal{W}}:= \|(\phi_{x}, \phi_{t})\|_{L^2_t\times L^{\infty}_x\cap L^{\infty}_t\times L^{2}_x([0, T]\times \S)}.
\end{equation}

Next, we investigate the  correction term $w_0$ satisfying
\begin{equation*}
       \Box  w_0= g_0, \;  w_0[0]= (0, 0),
\end{equation*}
with 
\begin{equation*}
    g_0= \left(| \phi_{1, t}|^2- | \phi_{1, x}|^2\right)  \phi_{1}- \left(|\tilde \phi_{1, t}|^2- |\tilde \phi_{1, x}|^2\right) \tilde \phi_{1}+ \mathbf{1}_{\omega} f_{0}^{\ \phi^{\perp}_1}- \mathbf{1}_{\omega} f_{0}^{\tilde \phi^{\perp}_1}- e_0.
\end{equation*}
Thanks to Lemma~\ref{lem:conlwm}, there is 
\begin{equation*}
    \|w_0\|_{\mathcal{W}}+ \|w_0\|_{L^{\infty}_{t, x}([0, T]\times \S)}\lesssim \|g_0\|_{L^2_{t, x}([0, T]\times \S)}.
\end{equation*}
It suffices to estimate the value of $\|g_0\|_{L^2_{t, x}([0, T]\times \S)}$.  Because 
\begin{equation*}
    |\phi_1(t, x)- p|+  |\tilde \phi_1(t, x)- p|\lesssim \varepsilon, \; \forall (t, x)\in [0, T]\times \S,
\end{equation*}
by the geometry of the sphere we have 
\begin{equation*}
    |f_{0}^{\ \phi^{\perp}_1}- f_{0}^{\tilde \phi^{\perp}_1}|\lesssim \varepsilon |f_0|,
\end{equation*}
which further yields 
\begin{equation*}
    \|\mathbf{1}_{\omega} f_{0}^{\ \phi^{\perp}_1}- \mathbf{1}_{\omega} f_{0}^{\tilde \phi^{\perp}_1}\|_{L^2_{t, x}([0, T]\times \S)}\lesssim \varepsilon \|f_0\|_{L^2_{t, x}([0, T]\times \S)}\lesssim \varepsilon \|\phi_0[T]- (p, 0)\|_{H^1_x\times L^2_x}.
\end{equation*}
Observe that 
\begin{equation*}
    \left(| \phi_{1, t}|^2\right)  \phi_{1}- \left(|\tilde \phi_{1, t}|^2\right) \tilde \phi_{1}= \langle w_{0, t}, \phi_{1, t}+ \tilde \phi_{1, t}\rangle \phi_1+ |\tilde \phi_{1, t}|^2 w_0,
\end{equation*}
thus
\begin{align*}
    &\;\;\;\;\; 
    \| \left(| \phi_{1, t}|^2\right)  \phi_{1}- \left(|\tilde \phi_{1, t}|^2\right) \tilde \phi_{1}\|_{L^2_{t, x}} \\
    &\lesssim \|w_{0, t}\|_{L^{\infty}_t L^2_x} \|\phi_{1, t}+ \tilde \phi_{1, t}\|_{L^2_t L^{\infty}_x}+  \|w_0\|_{L^{\infty}_{t, x}} \| \tilde \phi_{1, t}\|_{\mathcal{W}}^2 \\
    &\lesssim \varepsilon \left(\|w_0\|_{\mathcal{W}}+ \|w_0\|_{L^{\infty}_{t, x}} \right).
\end{align*}
Similar estimates also holds for
\begin{equation*}
    \| \left(| \phi_{1, x}|^2\right)  \phi_{1}- \left(|\tilde \phi_{1, x}|^2\right) \tilde \phi_{1}\|_{L^2_{t, x}}.
\end{equation*}
Combining the preceding estimates we arrive at 
\begin{equation*}
     \|w_0\|_{\mathcal{W}}+ \|w_0\|_{L^{\infty}_{t, x}([0, T]\times \S)}\lesssim \varepsilon \|\phi_0[T]- (p, 0)\|_{H^1_x\times L^2_x}+ \varepsilon \left(\|w_0\|_{\mathcal{W}}+ \|w_0\|_{L^{\infty}_{t, x}} \right).
\end{equation*}
Therefore, 
\begin{equation}
     \|w_0\|_{\mathcal{W}}+ \|w_0\|_{L^{\infty}_{t, x}([0, T]\times \S)}\lesssim \varepsilon \|\phi_0[T]- (p, 0)\|_{H^1_x\times L^2_x}
\end{equation}
provided $\tilde \varepsilon$ sufficiently small. In particular, this implies that 
\begin{equation}
    \|\phi_1[T]- (p, 0)\|_{H^1_x\times L^2_x}\lesssim \varepsilon \|\phi_0[T]- (p, 0)\|_{H^1_x\times L^2_x},
\end{equation}
thus we have gained an extra $\varepsilon$.

In conclusion we are able to find some effectively computable constant $\mathcal{C}>0$ such that 
\begin{gather*}
    \|\phi_1\|_{\mathcal{W}}\leq \mathcal{C} \varepsilon, \\
    |\phi_1(t, x)- p|\leq \mathcal{C} \varepsilon, \; \forall (t, x)\in [0, T]\times \S,\\
     \|f_0\|_{L^{\infty}_t L^2_x([0, T]\times \S)}\leq 25 G_T \varepsilon,\\
     \|\phi_1[T]- (p, 0)\|_{H^1_x\times L^2_x}\leq \mathcal{C} \varepsilon \|\phi_0[T]- (p, 0)\|_{H^1_x\times L^2_x}.
\end{gather*}

\noindent {\bf Step 2.} In the following we repeat the iteration procedure. Inspired by the preceding conclusion we expect to construct $\{(\phi_k, f_{k-1})\}_k$ in such fashion that 
\begin{gather*}
    \|\phi_k\|_{\mathcal{W}}\leq 2\mathcal{C} \varepsilon, \\
    |\phi_k(t, x)- p|\leq 2\mathcal{C} \varepsilon, \; \forall (t, x)\in [0, T]\times \S,\\
     \|f_{k-1}\|_{L^{\infty}_t L^2_x([0, T]\times \S)}\leq 50 G_T \varepsilon,
\end{gather*}
satisfying 
\begin{equation*}
     \|\phi_k[T]- (p, 0)\|_{H^1_x\times L^2_x}\leq \mathcal{C} \varepsilon \|\phi_{k- 1}[T]- (p, 0)\|_{H^1_x\times L^2_x}.
\end{equation*}
This motivates us to prove the following lemma at first:

\begin{lemma}\label{lem:contkeyiteration}
Let $T= 2\pi$. There exist some effectively computable $\tilde \varepsilon>0$ and $\mathcal{C}_1>0$ such that for any initial state $u[0]= (a, b): \S\rightarrow \mathbb{S}^k\times T\mathbb{S}^k$ verifying 
\begin{equation*}
       \|(a, b)- (p, 0)\|_{H^1_x\times L^2_x}= \varepsilon\leq \tilde \varepsilon, 
\end{equation*}
if some pair $(\phi_k, f_{k-1})$ satisfies
\begin{gather*}
     \Box \phi_k= \left(|\phi_{k, t}|^2- |\phi_{k, x}|^2\right) \phi_{k}+ \mathbf{1}_{\omega} f_{k-1}^{\phi^{\perp}_k}, \; \phi_k[0]= (a, b),\\
         \|\phi_k\|_{\mathcal{W}}\leq 2\mathcal{C} \varepsilon, \\
    |\phi_k(t, x)- p|\leq 2\mathcal{C} \varepsilon, \; \forall (t, x)\in [0, T]\times \S,\\
     \|f_{k-1}\|_{L^{\infty}_t L^2_x([0, T]\times \S)}\leq 50 G_T \varepsilon,
\end{gather*}
then we are able to construct another pair $(\phi_{k+1}, f_{k})$ such that 
\begin{gather*}
 \Box \phi_{k+1}= \left(|\phi_{k+1, t}|^2- |\phi_{k+1, x}|^2\right) \phi_{k+1}+ \mathbf{1}_{\omega} f_{k}^{\phi^{\perp}_{k+1}}, \; \phi_{k+1}[0]= (a, b),\\
  \| \phi_{k+1}[T]- (p, 0)\|_{H^1_x\times L^2_x}\leq \mathcal{C}_1 \varepsilon \|\phi_k[T]- (p, 0)\|_{H^1_x\times L^2_x},\\
    \|\phi_{k+1}- \phi_k\|_{\mathcal{W}}+    \|\phi_{k+1}- \phi_k\|_{L^{\infty}_{t, x}([0, T]\times \S)}\leq \mathcal{C}_1 \|\phi_k[T]- (p, 0)\|_{H^1_x\times L^2_x}, \\
    \|f_k- f_{k-1}\|_{L^{\infty}_t L^2_x([0, T]\times \S)}\leq G_T \|\phi_k[T]- (p, 0)\|_{H^1_x\times L^2_x}.
\end{gather*}
\end{lemma}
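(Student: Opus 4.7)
The plan is to perform at stage $k$ the exact same one-step construction carried out in the ``first iterate'' (Step~1) of the preceding proof, with $\phi_k$ playing the role of $\phi_0$ and $f_{k-1}$ playing the role of $0$. First, using Lemma~\ref{lem:conlwm}, pick $\delta f_k\in L^\infty_t L^2_x$ supported in $\omega$ with $\|\delta f_k\|_{L^\infty_t L^2_x}\leq G_T\|\phi_k[T]-(p,0)\|_{H^1_x\times L^2_x}$ such that the solution $\tilde\varphi_k$ of $\Box\tilde\varphi_k=\mathbf{1}_\omega\delta f_k$, $\tilde\varphi_k[0]=(0,0)$ satisfies $\tilde\varphi_k[T]=-\phi_k[T]+(p,0)$. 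Replace $\delta f_k$ by its projection onto $p^\perp$; the corresponding solution $\varphi_k$ with zero Cauchy data then satisfies $\varphi_k[T]=-\phi_k[T]+(p,0)+(\phi_k[T]-(p,0))^p$. Because $|\phi_k(T,x)-p|\lesssim\varepsilon$ under the induction hypothesis, the sphere geometry yields the key estimate $\|(\phi_k[T]-(p,0))^p\|_{H^1_x\times L^2_x}\lesssim\varepsilon\|\phi_k[T]-(p,0)\|_{H^1_x\times L^2_x}$, which is precisely the contraction factor we need. Set $\tilde\phi_{k+1}:=\phi_k+\varphi_k$ and $f_k:=f_{k-1}+\delta f_k$.

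Next I would run the error analysis. A direct computation shows
\[
\Box\tilde\phi_{k+1}=\bigl(|\tilde\phi_{k+1,t}|^2-|\tilde\phi_{k+1,x}|^2\bigr)\tilde\phi_{k+1}+\mathbf{1}_\omega f_k^{\tilde\phi_{k+1}^{\perp}}+e_k,
\]
where $e_k$ gathers the quadratic cross terms between $\phi_k$ and $\varphi_k$, the term $-\varphi_k(|\tilde\phi_{k+1,t}|^2-|\tilde\phi_{k+1,x}|^2)$, and the two projection defects $\mathbf{1}_\omega(f_{k-1}^{\phi_k^{\perp}}-f_{k-1}^{\tilde\phi_{k+1}^{\perp}})$ and $\mathbf{1}_\omega((\delta f_k)^{p^{\perp}}-(\delta f_k)^{\tilde\phi_{k+1}^{\perp}})$. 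Combining the hypothesis bounds $\|\phi_k\|_{\mathcal W},\|f_{k-1}\|_{L^\infty_t L^2_x}\lesssim\varepsilon$ and $|\phi_k-p|,|\tilde\phi_{k+1}-p|\lesssim\varepsilon$ with the linear estimate $\|\varphi_k\|_{\mathcal W}+\|\varphi_k\|_{L^\infty_{t,x}}\lesssim\|\phi_k[T]-(p,0)\|$ from Lemma~\ref{lem:conlwm}, each summand of $e_k$ is bounded in $L^2_{t,x}$ by a universal multiple of $\varepsilon\|\phi_k[T]-(p,0)\|$.

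Finally, let $\phi_{k+1}$ be the unique solution produced by Lemma~\ref{lem:inhwm} of the wave maps equation with control $f_k$ and data $(a,b)$, and set $w_k:=\phi_{k+1}-\tilde\phi_{k+1}$. Then $w_k$ solves a linear wave equation with zero Cauchy data and source $g_k$ equal to $-e_k$ plus differences of nonlinear and projection terms that depend linearly on $w_k$. Repeating the algebra at the end of Step~1 yields
\[
\|w_k\|_{\mathcal W}+\|w_k\|_{L^\infty_{t,x}}\leq C\varepsilon\|\phi_k[T]-(p,0)\|_{H^1_x\times L^2_x}+C\varepsilon\bigl(\|w_k\|_{\mathcal W}+\|w_k\|_{L^\infty_{t,x}}\bigr),
\]
which for $\tilde\varepsilon$ sufficiently small absorbs into $\|w_k\|_{\mathcal W}+\|w_k\|_{L^\infty_{t,x}}\lesssim\varepsilon\|\phi_k[T]-(p,0)\|$. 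The announced bounds on $\|\phi_{k+1}-\phi_k\|_{\mathcal W}+\|\phi_{k+1}-\phi_k\|_{L^\infty_{t,x}}$ and on $\|f_k-f_{k-1}\|_{L^\infty_t L^2_x}$ follow from the decompositions $\phi_{k+1}-\phi_k=\varphi_k+w_k$ and $f_k-f_{k-1}=\delta f_k$, while the triangle inequality
\[
\|\phi_{k+1}[T]-(p,0)\|_{H^1_x\times L^2_x}\leq\|(\phi_k[T]-(p,0))^p\|_{H^1_x\times L^2_x}+\|w_k[T]\|_{H^1_x\times L^2_x}\lesssim\varepsilon\|\phi_k[T]-(p,0)\|_{H^1_x\times L^2_x}
\]
delivers the contraction with constant $\mathcal C_1$. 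The main obstacle is the careful bookkeeping to certify that every term entering $e_k$ and $g_k$ carries a genuine factor $\varepsilon$ multiplying $\|\phi_k[T]-(p,0)\|$, rather than $\varepsilon$ alone, since this is precisely the mechanism by which the geometric defect $(\,\cdot\,)^p$ propagates through the nonlinear correction and produces a contraction in the terminal error instead of mere boundedness.
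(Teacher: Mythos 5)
Your proposal is correct and follows essentially the same route as the paper's own proof: choose the correction control from the linear wave-controllability (Lemma~\ref{lem:conlwm}), project it onto $p^{\perp}$ so that the geometric defect $(\phi_k[T]-(p,0))^p$ appears at the terminal time and contributes the contraction factor $\varepsilon$ via the sphere-geometry estimate, form $\tilde\phi_{k+1}=\phi_k+\varphi_k$, identify the error $e_k$ (quadratic cross terms, the $\varphi_k\cdot(\text{null form})$ term, and the two projection defects), then solve the true wave maps Cauchy problem to produce $\phi_{k+1}$ and absorb the small correction $w_k$ by a bootstrap in $\mathcal W\cap L^\infty_{t,x}$. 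Your notation $\delta f_k$ corresponds to the paper's $h_k$, and your decomposition of $e_k$ is exactly the paper's after expanding $f_k^{\tilde\phi_{k+1}^\perp}=f_{k-1}^{\tilde\phi_{k+1}^\perp}+h_k^{\tilde\phi_{k+1}^\perp}$; the only omitted remark is the trivial degenerate case $\phi_k[T]=(p,0)$.
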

Let us quickly comment on the use of Lemma~\ref{lem:contkeyiteration} to continue the iteration procedure. 
Clearly Case  $k=1$ verifies the conditions of Lemma~\ref{lem:contkeyiteration} according to Step 1.  By reducing the value of $\tilde \varepsilon$ if necessary a standard mathematical induction argument yields, 
\begin{gather*}
     \Box \phi_k= \left(|\phi_{k, t}|^2- |\phi_{k, x}|^2\right) \phi_{k}+ \mathbf{1}_{\omega} f_{k-1}^{\phi^{\perp}_k}, \; \phi_k[0]= (a, b),\\
         \|\phi_k\|_{\mathcal{W}}\leq (2- 2^{1-k})\mathcal{C} \varepsilon, \\
    |\phi_k(t, x)- p|\leq  (2- 2^{1-k})\mathcal{C} \varepsilon, \; \forall (t, x)\in [0, T]\times \S,\\
     \|f_{k-1}\|_{L^{\infty}_t L^2_x([0, T]\times \S)}\leq 25 (2- 2^{1-k}) G_T \varepsilon, \\
      \| \phi_{k}[T]- (p, 0)\|_{H^1_x\times L^2_x}\leq 2^{-k} \varepsilon^{3/2},
\end{gather*}
for all $k\in \mathbb{N}^*$.

\begin{proof}[Proof of Lemma~\ref{lem:contkeyiteration}]
We mimic the construction in Step 1: first we shall find some $f_k= f_{k-1}+ h_{k}$  to correct the error as much as possible (with the choice of $h_k$ by using the controllability of the linear wave equation), then we try to solve the inhomogeneous wave maps equation with the source term $f_k$  hoping that the solution $\phi_{k+1}$  becomes closer to $(p, 0)$.

If coincidently $\phi_k[T]- (p, 0)= (0, 0)$, then we stop the procedure as $(\phi_k, f_{k-1})$ is the required pair. Otherwise, we solve the linear control problem and  select $h_k$ in such fashion that the solution of 
\begin{equation*}
    \Box \tilde \varphi_k= \mathbf{1}_{\omega} h_k, \;  \tilde \varphi_k[0]= (0, 0),
\end{equation*}
satisfies
\begin{equation*}
    \tilde \varphi_k[T]= -\phi_k[T]+ (p, 0), 
\end{equation*}
with
\begin{equation}
    \|h_k\|_{L^{\infty}_t L^2_x([0, T]\times \S)}\leq G_T \|\phi_k[T]- (p, 0)\|_{H^1_x\times L^2_x}.
\end{equation}

Again, thanks to the geometry of  sphere, we can define $\varphi_k$ as the solution of 
\begin{equation*}
    \Box  \varphi_k= \mathbf{1}_{\omega} h_k^{p^{\perp}}, \;   \varphi_k[0]= (0, 0)
\end{equation*}
satisfying, according to  Lemma~\ref{lem:conlwm},
\begin{gather*}
    \|\varphi_k\|_{\mathcal{W}}+   \|\varphi_k\|_{L^{\infty}_{t, x}([0, T]\times \S)}\lesssim  \|\phi_k[T]- (p, 0)\|_{H^1_x\times L^2_x}\lesssim \varepsilon.
\end{gather*}

By defining $\tilde \phi_{k+1}= \phi_k+ \varphi_k$ and $f_k= f_{k-1}+ h_k$ we get 
\begin{gather*}
    \|\tilde \phi_{k+1}[T]- (p, 0)\|_{H^1_x\times L^2_x}=  \|\left(\phi_k[T]- (p, 0)\right)^p\|_{H^1_x\times L^2_x}\lesssim \varepsilon \|\phi_k[T]- (p, 0)\|_{H^1_x\times L^2_x},
\end{gather*}
as well as 
\begin{gather*}
     \|\tilde \phi_{k+1}\|_{\mathcal{W}}\lesssim \varepsilon \; \textrm{ and } \; |\tilde \phi_{k+1}(t, x)- p|\lesssim \varepsilon, \; \forall (t, x)\in [0, T]\times \S.
\end{gather*}
The function $\tilde \phi_{k+1}$ verifies the equation 
\begin{equation}
    \Box \tilde \phi_{k+1}= \left(|\tilde \phi_{k+1, t}|^2- |\tilde \phi_{k+1, x}|^2\right) \tilde \phi_{k+1}+ \mathbf{1}_{\omega} f_{k}^{\tilde \phi^{\perp}_{k+1}}+ e_k, \; \tilde \phi_{k+ 1}[0]= (a, b)
\end{equation}
with 
\begin{align*}
    e_k&= \phi_k\left(\langle 2\phi_{k, x}+ \varphi_{k, x}, \varphi_{k, x}\rangle- \langle 2\phi_{k, t}+ \varphi_{k, t}, \varphi_{k, t}\rangle\right) \\
    & \;\;\;\;\;\; - \varphi_k \left(|\tilde \phi_{k+1, t}|^2- |\tilde \phi_{k+1, x}|^2 \right)+ \mathbf{1}_{\omega} \left(h_k^{p^{\perp}}+ f_{k-1}^{\phi_k^{\perp}}- f_{k}^{\tilde \phi^{\perp}_{k+1}} \right).
\end{align*}
Immediately we have 
\begin{gather*}
    \|\phi_k\left(\langle 2\phi_{k, x}+ \varphi_{k, x}, \varphi_{k, x}\rangle- \langle 2\phi_{k, t}+ \varphi_{k, t}, \varphi_{k, t}\rangle\right) \|_{L^2_{t, x}} \lesssim \varepsilon\|\phi_k[T]- (p, 0)\|_{H^1_x\times L^2_x},\\
     \| \varphi_k \left(|\tilde \phi_{k+1, t}|^2- |\tilde \phi_{k+1, x}|^2 \right)\|_{L^2_{t, x}} \lesssim \varepsilon^2\|\phi_k[T]- (p, 0)\|_{H^1_x\times L^2_x}
\end{gather*}
Furthermore, since 
\begin{gather*}
    |\tilde \phi_{k+1}- \phi_k|\lesssim \|\phi_k[T]- (p, 0)\|_{H^1_x\times L^2_x}, \\
     |\tilde \phi_{k+1}- p|\lesssim \varepsilon,
\end{gather*}
we obtain 
\begin{gather*}
    \|f_{k-1}^{\phi_k^{\perp}}- f_{k-1}^{\tilde \phi^{\perp}_{k+1}}\|_{L^2_{t, x}}\lesssim \|\phi_k[T]- (p, 0)\|_{H^1_x\times L^2_x} \|f_{k-1}\|_{L^2_{t, x}}\lesssim \varepsilon \|\phi_k[T]- (p, 0)\|_{H^1_x\times L^2_x},\\
      \|h_{k}^{p^{\perp}}- h_{k}^{\tilde \phi^{\perp}_{k+1}}\|_{L^2_{t, x}}\lesssim \varepsilon \|h_{k}\|_{L^2_{t, x}}\lesssim \varepsilon \|\phi_k[T]- (p, 0)\|_{H^1_x\times L^2_x}.
\end{gather*}
The preceding estimates lead to 
\begin{equation}
     \|e_k\|_{L^2_{t, x}([0, T]\times \S)}\lesssim \varepsilon \|\phi_k[T]- (p, 0)\|_{H^1_x\times L^2_x}.
\end{equation}

Finally we replace $\tilde \phi_{k+1}$ by $\phi_{k+1}= \tilde \phi_{k+1}+ w_k= \phi_k+ \varphi_k+ w_k$ with some suitable correction term $w_k$ satisfying 
\begin{equation}
    \Box  \phi_{k+1}= \left(| \phi_{k+1, t}|^2- | \phi_{k+1, x}|^2\right)  \phi_{k+1}+ \mathbf{1}_{\omega} f_{k}^{ \phi^{\perp}_{k+1}}, \;  \phi_{k+ 1}[0]= (a, b).
\end{equation}
According to Lemma~\ref{lem:inhwm} the preceding Cauchy problem admits a unique solution satisfying 
\begin{gather*}
     |\phi_{k+ 1}(t, x)- p|\lesssim \varepsilon, \; \forall (t, x)\in [0, T]\times \S,  \\
     \|\phi_{k+1}\|_{\mathcal{W}}\lesssim \varepsilon,
\end{gather*}
which implies that $|w_k|\lesssim \varepsilon$. In order to get better estimates on $w_k$ we investigate its equation as 
\begin{equation*}
       \Box  w_k= g_k, \;  w_k[0]= (0, 0),
\end{equation*}
with 
\begin{equation*}
    g_k= \left(| \phi_{k+1, t}|^2- | \phi_{k+1, x}|^2\right)  \phi_{k+1}- \left(|\tilde \phi_{k+1, t}|^2- |\tilde \phi_{k+1, x}|^2\right) \tilde \phi_{k+1}+ \mathbf{1}_{\omega} f_{k}^{\ \phi^{\perp}_{k+1}}- \mathbf{1}_{\omega} f_{k}^{\tilde \phi^{\perp}_{k+1}}- e_k.
\end{equation*}
Thanks to Lemma~\ref{lem:conlwm}, there is 
\begin{equation*}
    \|w_k\|_{\mathcal{W}}+ \|w_k\|_{L^{\infty}_{t, x}([0, T]\times \S)}\lesssim \|g_k\|_{L^2_{t, x}([0, T]\times \S)}.
\end{equation*}

We have 
\begin{equation*}
    \left(| \phi_{k+1, t}|^2\right)  \phi_{k+1}- \left(|\tilde \phi_{k+1, t}|^2\right) \tilde \phi_{k+1}= \langle w_{k, t}, \phi_{k+1, t}+ \tilde \phi_{k+1, t}\rangle \phi_{k+1}+ |\tilde \phi_{k+1, t}|^2 w_k,
\end{equation*}
thus
\begin{align*}
    \| \left(| \phi_{k+1, t}|^2\right)  \phi_{k+1}- \left(|\tilde \phi_{k+1, t}|^2\right) \tilde \phi_{k+1}\|_{L^2_{t, x}} \lesssim \varepsilon \left(\|w_k\|_{\mathcal{W}}+ \|w_k\|_{L^{\infty}_{t, x}} \right),
\end{align*}
and similarly
\begin{align*}
    \| \left(| \phi_{k+1, x}|^2\right)  \phi_{k+1}- \left(|\tilde \phi_{k+1, x}|^2\right) \tilde \phi_{k+1}\|_{L^2_{t, x}} \lesssim \varepsilon \left(\|w_k\|_{\mathcal{W}}+ \|w_k\|_{L^{\infty}_{t, x}} \right).
\end{align*}
Since 
\begin{equation*}
    |\tilde \phi_{k+1}- \phi_{k+1}|\leq \|w_k\|_{L^{\infty}_{t, x}}\ll 1,
\end{equation*}
we have 
\begin{equation*}
    \|f_{k}^{\ \phi^{\perp}_{k+1}}-  f_{k}^{\tilde \phi^{\perp}_{k+1}}\|_{L^2_{t, x}}\lesssim \|w_k\|_{L^{\infty}_{t, x}} \|f_{k}\|_{L^2_{t, x}}\lesssim \varepsilon\|w_k\|_{L^{\infty}_{t, x}}.
\end{equation*}
Combine the preceding estimates we arrive at 
\begin{equation*}
     \|w_k\|_{\mathcal{W}}+ \|w_k\|_{L^{\infty}_{t, x}([0, T]\times \S)}\lesssim  \varepsilon \left(\|w_k\|_{\mathcal{W}}+ \|w_k\|_{L^{\infty}_{t, x}} \right)+ \varepsilon \|\phi_k[T]- (p, 0)\|_{H^1_x\times L^2_x},
\end{equation*}
hence, for $\tilde \varepsilon$ sufficiently small 
\begin{equation}
     \|w_k\|_{\mathcal{W}}+ \|w_k\|_{L^{\infty}_{t, x}([0, T]\times \S)}\lesssim   \varepsilon \|\phi_k[T]- (p, 0)\|_{H^1_x\times L^2_x}.
\end{equation}

Therefore, there exists some effectively computable $\mathcal{C}_1>0$ such that 
\begin{gather*}
     \| \phi_{k+1}[T]- (p, 0)\|_{H^1_x\times L^2_x}\leq \mathcal{C}_1 \varepsilon \|\phi_k[T]- (p, 0)\|_{H^1_x\times L^2_x}, \\
      \|\phi_{k+1}- \phi_k\|_{\mathcal{W}}+    \|\phi_{k+1}- \phi_k\|_{L^{\infty}_{t, x}([0, T]\times \S)}\leq \mathcal{C}_1 \|\phi_k[T]- (p, 0)\|_{H^1_x\times L^2_x}, \\
    \|f_k- f_{k-1}\|_{L^{\infty}_t L^2_x([0, T]\times \S)}\leq G_T \|\phi_k[T]- (p, 0)\|_{H^1_x\times L^2_x}.
\end{gather*}
This ends the proof of Lemma~\ref{lem:contkeyiteration}.
\end{proof}

\noindent {\bf Step 3.}
By the construction of $\{(\phi_k, f_{k-1})\}_{k\in \mathbb{N}^*}$ we know that 
\begin{gather*}
    \{\phi_k[t]\}_{k\in \mathbb{N}^*} \textrm{ is a Cauchy sequence in } C^0_t([0, T]; H^1_x\times L^2_x(\S)),\\
     \{(\phi_{k, x}, \phi_{k, t})\}_{k\in \mathbb{N}^*} \textrm{ is a Cauchy sequence in }  L^2_t L^{\infty}_x([0, T]\times \S), \\
      \{\phi_k\}_{k\in \mathbb{N}^*} \textrm{ is a Cauchy sequence in } C^0_{t, x}([0, T]\times \S),\\
      \{f_{k-1}\}_{k\in \mathbb{N}^*} \textrm{ is a Cauchy sequence in }  C^0_t([0, T]; L^2_x(\S)).
\end{gather*}
Hence there exist a pair $(\phi, f)$ such that 
\begin{align*}
    (\phi_k, \phi_{k, t})&\rightarrow (\phi, \phi_t) \; \textrm{ in }  C^0_t([0, T]; H^1_x\times L^2_x(\S)), \\
     (\phi_{k, x}, \phi_{k, t})&\rightarrow (\phi_x, \phi_t) \; \textrm{ in }  L^2_t L^{\infty}_x([0, T]\times \S),\\
    \phi_k &\rightarrow \phi \textrm{ in } \; C^0_{t, x}([0, T]\times \S), \\
    f_{k-1}&\rightarrow f \textrm{ in } \; C^0_t([0, T]; L^2_x(\S)).
\end{align*}
By passing the limit of those equations 
satisfied by $\{(\phi_k, f_{k-1})\}_k$
we easily conclude the following equation on $\phi$:
\begin{gather*}
       \Box \phi= \left(|\phi_{t}|^2- |\phi_{x}|^2\right) \phi+ \mathbf{1}_{\omega} f^{\phi^{\perp}}, \\
       \phi[0]= (a, b) \; \textrm{ and } \;  \phi[T]= (0, 0).
\end{gather*}
\label{subsec:2globalcontrol}
\subsection{Semi-global exact controllability}
Let given $\nu>0$. According to the exponential stability of the damped wave maps equation Theorem~\ref{thm:main} (or the asymptotic stability guaranteed by Propositions~\ref{prop:1w}--\ref{prop:2}), there exists some effectively computable $T_1>0$  such that for any initial state $\phi[0]: \S\rightarrow \mathbb{S}^k\times T\mathbb{S}^k$  satisfying 
\begin{equation*}
    E(0)\leq 2\pi- \nu,
\end{equation*}
the unique solution of the damped wave maps equation verifies $E(T_1)\leq (\tilde \varepsilon)^2/100$. Then, thanks to Corollary~\ref{thm:localexactcontrolwm} we are able to find some effectively computable $T_2>0$ and  control $f\in L^{\infty}_t(T_1, T_1+ T_2; L^2_x(\S))$ such that the controlled wave maps equation  has final state $(1, 0)$. By the time reversal of the controlled wave maps equation, this implies that the wave maps equation is exact controllable in time period $2(T_1+ T_2)$ for states with energy below $2\pi- \nu$.

\bibliographystyle{plain}    
\bibliography{wavemapcontrol2}

\end{document}